\setlist[enumerate]{leftmargin=*}
\newtheorem{theorem}{Theorem}[section]
\newaliascnt{lemma}{theorem}
\newaliascnt{corollary}{theorem}
\newaliascnt{definition}{theorem}
\newaliascnt{remark}{theorem}
\newaliascnt{proposition}{theorem}
\newaliascnt{conjecture}{theorem}
\newaliascnt{example}{theorem}
\newaliascnt{question}{theorem}
\newaliascnt{claim}{theorem}
\newtheorem{lemma}[lemma]{Lemma}
\newtheorem*{lemma*}{Lemma}
\newtheorem*{corollary*}{Corollary}
\newtheorem{definition}[definition]{Definition}
\newtheorem*{definition*}{Definition}
\newtheorem{remark}[remark]{Remark}
\newtheorem*{remark*}{Remark}
\newtheorem{proposition}[proposition]{Proposition}
\newtheorem*{proposition*}{Proposition}
\newtheorem{conjecture}[conjecture]{Conjecture}
\newtheorem*{conjecture*}{Conjecture}
\newtheorem*{example*}{Example}
\newtheorem*{problem*}{Problem}
\newtheorem{question}[question]{Question}
\newtheorem*{question*}{Question}
\newtheorem*{claim*}{Claim}
\Crefname{figure}{Figure}{Figures}
\DeclareMathOperator{\conv}{conv}
\DeclareMathOperator{\diag}{diag}
\def\cG{\mathcal{G}}
\def\cH{\mathcal{H}}
\def\cO{\mathcal{O}}
\def\R{\mathbb{R}}
\def\Z{\mathbb{Z}}
\def\N{\mathbb{N}}
\def\Q{\mathbb{Q}}
\DeclareMathOperator{\vol}{vol}
\DeclareMathOperator{\Vol}{Vol}
\DeclarePairedDelimiter{\card}{\lvert}{\rvert}
\newcommand{\abs}[1]{\left|#1\right|}
\newcommand{\zero}{\mathbf{0}}
\newcommand{\dif}{\mathop{}\!\mathrm{d}}
\DeclareMathOperator{\rank}{rk}
\DeclareMathOperator{\fct}{fct}
\DeclareMathOperator{\adj}{adj}
\DeclareMathOperator{\h}{h}
\DeclareMathOperator{\g}{g}
\DeclareMathOperator{\s}{s}
\DeclareMathOperator{\gv}{gv}
\DeclareMathOperator{\w}{w}
\DeclareMathOperator{\wv}{wv}
\newcommand{\floor}[1]{\left\lfloor #1\right\rfloor}
\newcommand{\ceil}[1]{\left\lceil #1 \right\rceil}
\newcommand{\CC}{{C\nolinebreak[4]\hspace{-.05em}\raisebox{.4ex}{\tiny\textbf{++}}}}
\numberwithin{equation}{section}
\begin{document}

\title[Size of integer programs with bounded non-vanishing subdeterminants]{On the size of integer programs with bounded non-vanishing subdeterminants}

\author{Bj\"orn Kriepke}
\address{Institute of Mathematics\\
 University of Rostock\\
 Germany}
\email{bjoern.kriepke@uni-rostock.de}

\author{Gohar M.~Kyureghyan}
\address{Institute of Mathematics\\
 University of Rostock\\
 Germany}
\email{gohar.kyureghyan@uni-rostock.de}

\author{Matthias Schymura}
\address{Institute of Mathematics\\
 University of Rostock\\
 Germany}
\email{matthiasmschymura@gmail.com}





\begin{abstract}
Motivated by complexity questions in integer programming, this paper aims to contribute to the understanding of combinatorial properties of integer matrices of row rank~$r$ and with bounded subdeterminants.
In particular, we study the column number question for integer matrices whose every $r \times r$ minor is non-zero and bounded by a fixed constant~$\Delta$ in absolute value.
Approaching the problem in two different ways, one that uses results from coding theory, and the other from the geometry of numbers, we obtain linear and asymptotically sublinear upper bounds on the maximal number of columns of such matrices, respectively.
We complement these results by lower bound constructions, matching the linear upper bound for $r=2$, and a discussion of a computational approach to determine the maximal number of columns for small parameters $\Delta$ and $r$.
\end{abstract}

\maketitle

\section{Introduction}

This paper contributes to the understanding of combinatorial properties of integer programs, parametrized by the maximal absolute value of a subdeterminant of their constraint matrices.
An \emph{integer program} is an optimization problem of the form
\begin{align}
\max\left\{ c^\intercal x : Bx \leq d, x \in \Z^r \right\},\tag{IP}
\end{align}
where $B \in \Z^{n \times r}$ is the constraint matrix, $c \in \Z^r$ the cost function, and $d \in \Z^n$ the right hand side.
The underlying polyhedron of this integer program is defined as
\[
P(B,d) := \left\{ x \in \R^r : Bx \leq d \right\},
\]
that is, as the set of all solutions of the linear inequality system $b_1^\intercal x \leq d_1, \ldots, b_n^\intercal x \leq d_n$ with $b_1,\ldots,b_n \in \Z^r$ denoting the rows of~$B$, which we compactly write as $Bx \leq d$.

Solving a general integer program is computationally expensive.
Indeed the associated decision problem that asks whether $P(B,d) \cap \Z^r \neq \emptyset$ is among Karp's~\cite{karp1972reducibility} famous early list of NP-complete problems.
The most efficient algorithm for solving a general instance of the integer program~(IP) was very recently proposed in~\cite{reisrothvoss2023thesubspace}, and has a running time of $\log(2r)^{\cO(r)}$ multiplied with a polynomial in the encoding length of~$B,d$, and~$c$.
We refer to that paper~\cite{reisrothvoss2023thesubspace} for a detailed discussion of the history of such complexity results.
The question whether~(IP) can be solved in single exponential time is one of the major research problems in the area.

Given the computational hardness of integer programming, there is an extensive amount of research about investigating integer programs with additional structure.
The most basic assumption that implies efficient algorithms is that the constraint matrix~$B$ in~(IP) is \emph{totally unimodular}, which means that all its \emph{minors}, that is, the determinants of square submatrices, are in $\{-1,0,1\}$.
As a consequence, the polyhedron $P(B,d)$ has only integral vertices and thus~(IP) can be solved in polynomial time, for instance, by linear programming techniques (see~\cite[Ch.~19]{schrijver1986theory}).

This observation suggests to parametrize integer programs by maximal determinants of the submatrices of the constraint matrix.
More precisely, given an integer $\Delta \in \Z_{>0}$, we say that a \emph{rational} matrix $A \in \Q^{r \times n}$ of rank $\rank(A)$ is \emph{$\Delta$-submodular}, if all its $\rank(A) \times \rank(A)$ minors are bounded by $\Delta$ in absolute value.
Further, we say that~$A$ is \emph{$\Delta$-modular}, if it is $\Delta$-submodular and there is a $\rank(A) \times \rank(A)$ minor equal to~$\Delta$ or~$-\Delta$.
If the minors of \emph{every} size are bounded by~$\Delta$ in absolute value, we call~$A$ \emph{totally $\Delta$-(sub)modular}.
An integer program~(IP) with a $\Delta$-modular constraint matrix~$B$ is called a \emph{$\Delta$-modular integer program}.
Note that this definition is independent of the right hand side~$d \in \Z^n$.

As discussed above, totally unimodular ($\Delta=1$) integer programs are efficiently solvable.
This is relevant for problems that can be formulated by network matrices, for instance (see~\cite[Ch.~19]{schrijver1986theory}), but at the same time it is a very restrictive assumption.
Extending our understanding of $\Delta$-modular integer programs beyond the totally unimodular case is a prevailing and currently very active research direction in the community.
The main conjecture in the field is that efficient algorithms exist, whenever~$\Delta$ is not part of the input:

\begin{conjecture}
\label{conj:ip-subdeterminants}
Let $\Delta \in \Z_{>0}$ be fixed.
Then, there is a polynomial time algorithm to solve any integer program of the form~(IP) with a $\Delta$-modular constraint matrix.
\end{conjecture}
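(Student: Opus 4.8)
This is the central open conjecture of the area, so a proof proposal can at best outline the strategy that the known special cases suggest and pinpoint where it breaks down. The natural framework reduces solving~(IP) to linear optimization over the integer hull $P_I(B,d) := \conv\bigl(P(B,d) \cap \Z^r\bigr)$ and combines this with a \emph{proximity} argument: first solve the linear relaxation $\max\{c^\intercal x : Bx \leq d\}$ to obtain an optimal vertex $x^*$, then look for an optimal integer point in a bounded neighbourhood of $x^*$. The plan is thus threefold: (i) establish a proximity bound guaranteeing that some optimal solution of~(IP) lies within $\ell_\infty$-distance $f(r,\Delta)$ of $x^*$; (ii) exploit the $\Delta$-modularity of $B$ to control the combinatorial complexity of this search region; and (iii) process the relevant integer points in time polynomial in the encoding length of $B$, $d$, and $c$.

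For step~(i), sensitivity theorems of Cook--Gerards--Schrijver--Tardos type, later sharpened by Eisenbrand--Weismantel and others, already furnish an $\ell_\infty$-distance bound that is controlled by a function of $r$ and $\Delta$ alone and is independent of the right hand side~$d$, so this ingredient is essentially in place. Step~(ii) is where the theme of the present paper enters: $\Delta$-modularity forces the constraint matrix to be highly structured, and Heller-type bounds — of exactly the kind we study here under the non-vanishing hypothesis — limit the number of distinct constraint rows $b_i$, and hence the number of distinct facet directions of $P(B,d)$. One would try to turn this scarcity of directions into a polynomial-size description of the faces of $P(B,d)$ that meet the proximity region, and of the integer hull restricted to it.

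The genuine difficulty, and the reason the conjecture remains open for $\Delta \geq 3$, lies in step~(iii): although proximity confines an optimal integer point to a box of bounded radius, this box lives in $\R^r$ with $r$ part of the input, so it may contain exponentially many (in $r$) lattice points. A brute-force enumeration is therefore hopeless, and one needs a structural description of the integer hull $P_I(B,d)$ — or of the $\Delta$-modular matrix $B$ itself — that organises the search. For $\Delta = 1$ the integer hull coincides with $P(B,d)$ and linear programming suffices; for the bimodular case $\Delta = 2$ the breakthrough of Artmann--Weismantel--Zenklusen succeeds precisely because a delicate structural analysis of the constraint matrix locates an optimal integer vertex through polynomially many auxiliary linear programs. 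The crux of any attempt at the general conjecture is to find the analogue of that structure theory for arbitrary fixed $\Delta$: a decomposition of $\Delta$-modular matrices fine enough to recurse on, or a direct combinatorial handle on the vertices of $P_I(B,d)$. I expect this structural step to be the main obstacle, and it is exactly where sharper column-number and column-diversity bounds, such as those developed in this paper, are likely to enter as a necessary ingredient.
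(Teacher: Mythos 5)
This statement is \cref{conj:ip-subdeterminants}, which the paper states explicitly as an open conjecture and does not prove; there is no proof in the paper to compare against. Your proposal is candid about this and is best read as a survey of the standard attack rather than a proof, so the honest verdict is that the gap is the entire argument: none of your three steps is carried to completion for general fixed $\Delta$, and you say so yourself.

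On the substance of your outline: step~(i) is indeed available in the literature (proximity bounds depending only on $r$ and $\Delta$), and step~(ii) is where results like the column-number bounds of this paper would enter, though note that the bounds proved here concern the \emph{generic} case (all $r\times r$ minors non-vanishing), which is exactly the setting where the algorithmic question is already resolved --- the paper cites polynomial-time solvability and even efficient vertex enumeration of the integer hull for generic $\Delta$-modular programs. The hard case of the conjecture is the non-generic one, where vanishing minors are allowed, and there the known column bounds ($\h(\Delta,r) \in \cO(r^4)\cdot\Delta$ and the like) do not by themselves yield an algorithm. Your identification of step~(iii) --- the absence of a structure theory for $\Delta$-modular matrices beyond $\Delta=2$ that would organise the search over the proximity box --- as the crux is accurate and matches the state of the art the paper describes, but it remains a missing idea, not a filled one. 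Nothing in your proposal, or in the paper, constitutes a proof of the conjecture.
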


The origin of this conjecture is hard to trace, but~\cite{gribanovmalyshevpardalosveselov} attribute it to~\cite{shevchenko1997qualitative}; it might be much older than that though.
Despite the fact that \cref{conj:ip-subdeterminants} is far from being resolved in whole generality, there is a list of pertinent results supporting it and which cover different variations and specializations of the problem.
In fact, polynomial time algorithms have been devised for $2$-modular integer programs~\cite{artmannweismantelzenklusen2017astrongly}, for totally $\Delta$-modular integer programs with at most two non-zeros per row~\cite{fiorinijoretweltgeyuditsky2021integer}, and for $\Delta \leq 4$ for so-called strictly $\Delta$-modular integer programs~\cite{naegelesantiagozenklusen2021congruency}.

Additionally, the class of \emph{generic} $\Delta$-modular integer programs is well understood from a computational point of view.
A matrix $A \in \Z^{r \times n}$ is called \emph{generic}, if all its $\rank(A) \times \rank(A)$ minors are non-zero, that is, every set of~$\rank(A)$ columns or rows of~$A$ is linearly independent.
Generic $\Delta$-modular integer programming, for arbitrary but fixed $\Delta \in \Z_{>0}$, is known to be polynomial time solvable due to~\cite{artmanneisenbrandglanzeroertelvempalaweismantel2016anote} (the case of generic $2$-modular constraint matrices was known before~\cite{veselovchirkov2009integer}).
More strongly, all the potentially relevant feasible points in the underlying polyhedron of a generic $\Delta$-modular integer program can be enumerated efficiently:

\begin{theorem}[{\cite[Thm.~2.1]{jiangbasu2022enumerating}}]
Let $\Delta \in \Z_{>0}$ be fixed.
Then, for every generic $\Delta$-modular matrix $B \in \Z^{n \times r}$ and every $d \in \Z^n$, one can enumerate the vertices of the convex hull of $P(B,d) \cap \Z^r$ in polynomial time.
\end{theorem}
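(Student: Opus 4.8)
The plan is to combine an $n$-independent proximity bound with a bounded local search around each vertex of the linear relaxation. First I would fix, for a given target vertex $v$ of the integer hull $\conv(P(B,d) \cap \Z^r)$, an objective $c \in \Z^r$ for which $v$ is the unique integer maximizer; since $v$ is an extreme point its normal cone is full-dimensional, so such a $c$ can be chosen generic, and then the linear program $\max\{c^\intercal x : x \in P(B,d)\}$ is attained at a single vertex $\bar v$ of $P(B,d)$. The genericity of $B$ guarantees that every vertex of $P(B,d)$ is simple, i.e.\ determined by exactly $r$ tight constraints $B_I x = d_I$ with $\abs{\det B_I} \in \{1, \ldots, \Delta\}$; in particular $P(B,d)$ has at most $\binom{n}{r}$ vertices, which is polynomial in $n$ for fixed $r$.

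The key ingredient is to show that $v$ lies within $\ell_\infty$-distance $\rho = \rho(\Delta,r)$ of its associated continuous vertex $\bar v$, where $\rho$ depends only on $\Delta$ and $r$ and not on $n$. This is where the bounded-subdeterminant hypothesis enters decisively, through Steinitz-type proximity estimates available for systems all of whose $r \times r$ minors are bounded by $\Delta$ in absolute value. Consequently every vertex of the integer hull sits in a box $\bar v + [-\rho,\rho]^r$ attached to some continuous vertex $\bar v$, and each such box contains at most $(2\rho+1)^r = \cO((\Delta r)^r)$ integer points.

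The algorithm then reads off directly: enumerate all vertices $\bar v$ of $P(B,d)$ by inspecting the $\binom{n}{r}$ candidate index sets $I$ and solving $B_I x = d_I$; for each feasible vertex, collect the integer points of $P(B,d)$ inside $\bar v + [-\rho,\rho]^r$ into a candidate set $S$. By the proximity step, $S$ contains every vertex of $\conv(P(B,d) \cap \Z^r)$, and since $S \subseteq P(B,d) \cap \Z^r$ this forces $\conv(S) = \conv(P(B,d) \cap \Z^r)$. As $\abs{S} = \cO(n^r (\Delta r)^r)$ is polynomial, the vertices of $\conv(S)$ can be extracted by a polynomial number of linear-feasibility checks, which completes the enumeration.

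The main obstacle I expect is establishing the proximity bound $\rho(\Delta,r)$ with no dependence on $n$, and in particular arguing that it applies uniformly to \emph{every} vertex of the integer hull rather than to one fixed linear objective. The genericity hypothesis is what keeps this clean: it forces the relevant continuous optima to be vertices, so that the local corner cone $\{x : B_I x \le d_I\}$ at $\bar v$ governs the nearby integer points, instead of a higher-dimensional optimal face whose integer structure would be harder to localize. A secondary technical point is the case of unbounded $P(B,d)$, where I would argue that the recession cone of the integer hull coincides with that of $P(B,d)$, so that only objectives bounded over $P(B,d)$ can produce integer-hull vertices and the proximity localization still applies.
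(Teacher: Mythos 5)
First, a remark on scope: the paper does not prove this statement at all --- it is quoted verbatim from Jiang--Basu \cite[Thm.~2.1]{jiangbasu2022enumerating} as an imported result, so there is no internal proof to compare against. Judging your proposal on its own merits, the high-level plan (attach every vertex of the integer hull to a vertex of the LP relaxation via an objective in its normal cone, then do a bounded local search whose size is controlled by $\Delta$) is indeed the strategy used in the literature on non-degenerate integer programs. However, the step you yourself flag as the main obstacle is where the argument genuinely breaks: there is \emph{no} $\ell_\infty$-proximity bound $\rho(\Delta,r)$ depending only on $\Delta$ and $r$. Take $N$ even and large and
\[
B=\begin{pmatrix} N & N+2\\ N-1 & N+1 \end{pmatrix},\qquad d=\begin{pmatrix}1\\0\end{pmatrix},
\]
so that $B$ is generic and $2$-modular (its only $2\times 2$ minor equals $2$; one can append the row $(-1,-1)$ with a huge right-hand side to get $n>r$ while keeping all minors in $\{2\}$). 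The unique vertex of $P(B,d)$ is $\bar v=\bigl(\tfrac{N+1}{2},-\tfrac{N-1}{2}\bigr)$, while a short computation in the slack coordinates $s=d-Bx$ shows that the integer hull of $P(B,d)$ has a unique vertex, namely the origin, at $\ell_\infty$-distance $\tfrac{N+1}{2}$ from $\bar v$. So the box $\bar v+[-\rho,\rho]^r$ with $\rho=\rho(\Delta,r)$ misses every vertex of the integer hull, and the candidate set $S$ produced by your algorithm can be empty while the integer hull is not.

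The fix, and the actual content of the cited theorem, is to localize in \emph{constraint space} rather than in the ambient $\ell_\infty$-metric: one shows that a vertex $v$ of the integer hull associated with the LP basis $I$ has bounded slack vector $s=d_I-B_I v\in\Z_{\geq 0}^r$ (bounded in a norm controlled by $\Delta$ alone), and then enumerates candidate slack vectors, each of which determines at most one integer point since $B_I$ is injective. In the example above the relevant slack is $(1,0)$ even though $\|B_I^{-1}s\|_\infty\sim N$; the discrepancy is exactly the unbounded $(r-1)\times(r-1)$ minors hiding in $B_I^{-1}=\adj(B_I)/\det(B_I)$, which $\Delta$-modularity (a condition on $r\times r$ minors only) does not control. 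Establishing the bounded-slack property is where genericity is really used --- via a pigeonhole argument in the fundamental parallelepiped of $B_I$, one replaces a feasible integer point with large slack by a better one, and genericity guarantees the replacement still satisfies the non-basic constraints --- and none of this is supplied by a citation to generic ``Steinitz-type proximity estimates,'' which in the standard formulations require bounds on subdeterminants of all orders. A secondary point: your counts $\binom{n}{r}$ and $(2\rho+1)^r$ are only polynomial for fixed $r$, whereas the theorem fixes only $\Delta$; here one should invoke the bound $n\leq\max\{r,2\Delta\}+1$ on the number of rows of a generic $\Delta$-modular matrix (essentially \cref{thm:linear-upper-bound} of this paper) to keep the basis enumeration polynomial.
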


In this paper, we continue the study of generic $\Delta$-modular integer programs and their combinatorial properties.
Our aim is to underpin the algorithmic results for this class of optimization problems by corresponding strong combinatorial obstructions.
More precisely, we are interested in the maximum number of irredundant inequality constraints that a generic $\Delta$-modular integer program can have.
After interchanging rows and columns (which is more convenient for our analysis) this corresponds to bounding the number of columns of a generic $\Delta$-modular integer matrix for a fixed number~$r$ of rows.
We may also assume that the considered matrices have \emph{no repeating columns}, and we will do so for the rest of the paper without further mention.
By this assumption we can identify a matrix $A \in \Z^{r \times n}$ with its set of columns and then consider it as a subset of $\Z^r$ of cardinality~$n$.

The combinatorial question described above is the so-called \emph{column number question} for generic $\Delta$-modular integer matrices and is made precise by the function
\begin{align*}
\g(\Delta,r) &:= \max\left\{ n : \exists \textrm{ a generic }\Delta\textrm{-modular }A \in \Z^{r \times n}\textrm{ with} \rank(A) = r \right\}.
\end{align*}
The relation to the combinatorial complexity of the corresponding~(IP)s is given by
\begin{align}
\max\left\{ \fct(P(B,d)) : B^\intercal \in \Z^{r \times n} \textrm{ is generic } \Delta\textrm{-modular}, d \in \Z^n \right\} = 2 \cdot \g(\Delta,r),\label{eqn:g-facet-number}
\end{align}
where $\fct(P)$ denotes the number of facets of the polyhedron~$P$.
This identity can be seen by taking suitably large entries for the vector~$d$ of right hand sides, so that every given constraint corresponds to a facet-defining inequality of the polyhedron $P(B,d)$.
The factor of two arises as the definition of $\g(\Delta,r)$ excludes parallel columns, but the polyhedron $P(B,d)$ can have pairs of facets that are supported by parallel hyperplanes with opposite outer normal vectors.

Our main results on the behavior of the function~$\g(\Delta,r)$ substantiate the fact that generic $\Delta$-modular integer programs are quite restrictive.
They are summarized as follows:

\begin{theorem}
\label{thm:linear-upper-bound}
Let $r$ and $\Delta$ be positive integers with $r \geq 2$, and let $p$ be the smallest prime number with $\Delta < p$. Then,
\begin{align}
\g(\Delta,r) \leq \max\{r, p\} + 1.\label{eqn:linear-bound}
\end{align}
Furthermore, if $r,\Delta \geq 2$, then $\g(\Delta,r) \leq 2\Delta$, and if $r \geq 2\Delta - 1$, then $\g(\Delta,r) = r+1$.
\end{theorem}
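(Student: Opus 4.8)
The plan is to derive all three assertions from the single inequality $\g(\Delta,r)\le\max\{r,p\}+1$, treating that inequality as the heart of the matter and the two ``Furthermore'' statements as consequences of it together with Bertrand's postulate. For the lower bound matching the last assertion I would first record the elementary construction given by the $r+1$ columns $e_1,\ldots,e_r$ and $v=(\Delta,1,\ldots,1)^\intercal$: deleting $v$ yields the minor $\det I_r=1$, while deleting $e_i$ yields the minor $\pm v_i\in\{\pm\Delta,\pm1\}$, so the configuration is generic and $\Delta$-modular, whence $\g(\Delta,r)\ge r+1$ for all $r,\Delta$. By Bertrand's postulate the smallest prime $p>\Delta$ satisfies $p\le 2\Delta-1$ when $\Delta\ge2$. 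Hence for $r\le 2\Delta-1$ we get $\max\{r,p\}+1\le(2\Delta-1)+1=2\Delta$, giving the bound $\g(\Delta,r)\le2\Delta$; and for $r\ge2\Delta-1\ge p$ we have $\max\{r,p\}=r$, so the main inequality reads $\g(\Delta,r)\le r+1$, which together with the construction yields $\g(\Delta,r)=r+1$.

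It therefore remains to prove the main inequality, and here the plan is to reduce modulo~$p$. Let $A\in\Z^{r\times n}$ be generic $\Delta$-modular with $\rank(A)=r$ realizing $n=\g(\Delta,r)$, and let $\bar A\in\mathbb{F}_p^{r\times n}$ be its reduction. Every $r\times r$ minor of $A$ is a nonzero integer of absolute value at most $\Delta<p$, so it stays nonzero in $\mathbb{F}_p$; consequently every $r$ columns of $\bar A$ are linearly independent over $\mathbb{F}_p$. In coding-theoretic terms, $\bar A$ generates an MDS code of length $n$ and dimension $r$ over $\mathbb{F}_p$ (equivalently, its columns form an arc of $n$ points in $\mathbb{P}^{r-1}(\mathbb{F}_p)$). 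If $n\le r+1$ the inequality is immediate, so I would assume $n\ge r+2$, making both the code and its dual nontrivial, i.e.\ $2\le r$ and $2\le n-r$.

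The final step is to bound the length of such a code. First I would record the elementary observation that, writing a generator matrix in systematic form $[\,I_r\mid B\,]$, the MDS property forces every square submatrix of $B$ to be nonsingular; looking at any two rows of $B$, the nonvanishing of all entries and of all $2\times2$ minors says that the $n-r$ ratios of corresponding entries are distinct elements of $\mathbb{F}_p^{\ast}$, so $n-r\le p-1$, and symmetrically (using two columns of $B$) $r\le p-1$. In particular $r\le p-1$, so $\max\{r,p\}=p$, and it suffices to show $n\le p+1$. This is exactly the statement of the MDS conjecture for the prime field $\mathbb{F}_p$ in the admissible range $2\le r\le p-1$, which is a theorem of Ball; invoking it closes the argument.

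The main obstacle is this last step: the elementary ratio argument only delivers the weak bound $n\le 2p-2$, whereas the asserted $\max\{r,p\}+1$ requires the sharp value $p+1$ in the nontrivial range. Bridging this gap is precisely the content of the (now proven, for prime fields) MDS conjecture, so the crux is to check that the hypotheses of Ball's theorem are met and, if one wants a self-contained treatment, to supply a proof of the needed case of the MDS bound over $\mathbb{F}_p$ rather than citing it as a black box. The reduction modulo~$p$ and the bookkeeping of the first paragraph are routine by comparison.
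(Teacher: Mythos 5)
Your proposal is correct and follows essentially the same route as the paper: reduce the matrix modulo the smallest prime $p>\Delta$, observe that genericity together with $0<\abs{\det}\le\Delta<p$ makes every $r$ columns independent over $\mathbb{F}_p$, and invoke Ball's theorem on the MDS conjecture over prime fields, with Bertrand's postulate and the standard $(r+1)$-column construction handling the remaining claims. The only (cosmetic) difference is that you re-derive the $\max\{r,p\}+1$ form from the $n\le p+1$ statement via the trivial systematic-form estimates, whereas the paper cites a version of Ball's result that already covers all $r\ge 2$; note also that your restriction of the $2\Delta$ bound to $r\le 2\Delta-1$ is the correct reading, since for $r\ge 2\Delta$ one has $\g(\Delta,r)=r+1>2\Delta$.
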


For $r=2$, the bound~\eqref{eqn:linear-bound} appears in \cite{paatstallknechtwalshxu2022onthecolumn}, for $r \geq 3$ it is apparently new.
The fact that $\g(\Delta,r) = r+1$, if $r$ is large enough compared to~$\Delta$, was established before in~\cite[Lem.~7]{artmanneisenbrandglanzeroertelvempalaweismantel2016anote}, albeit with the much stronger assumption that~$r$ is at least doubly exponential in~$\Delta$.
A direct consequence of~\eqref{eqn:linear-bound} is that $\g(\Delta,r)$ is at most linear in \emph{both} parameters~$\Delta$ and~$r$.
The previous state of the art was given by the inequality $\g(\Delta,r) \leq r + \Delta^2$, established in~\cite[Lem.~8]{paatschloeterweismantel2022theintegrality} (cf.~\cite[Proof of Lem.~1]{glanzerstallknechtweismantel2022notesonabc} for a similar bound).
The authors of~\cite{paatschloeterweismantel2022theintegrality} show how bounds on $\g(\Delta,r)$ have consequences on what they term the \emph{integrality number} of the corresponding integer programs.

As our second main result we show that for fixed rank~$r \geq 3$ the bound~\eqref{eqn:linear-bound} is not tight and that the dependence on~$\Delta$ is indeed \emph{sublinear}.

\begin{theorem}
\label{thm:sublinear-bound-generic-heller}
For every $r,\Delta \in \Z_{>0}$ with $r \geq 3$, we have
\[
\g(\Delta,r) \leq 130\, r^3 \Delta^{\frac{2}{r}}.
\]
\end{theorem}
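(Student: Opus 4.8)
The plan is to first reduce the statement to a purely geometric lattice-point count, and then to attack that count with the geometry of numbers. After multiplying $A$ by a suitable element of $\GL_r(\Z)$ and dividing out the greatest common divisor of its $r\times r$ minors, I may assume that the columns of $A$ generate $\Z^r$; this preserves all maximal minors up to a common unimodular factor, hence the hypotheses. Since $A$ is $\Delta$-modular, some $r$ of its columns $b_1,\dots,b_r$ form a basis with $\lvert\det(b_1,\dots,b_r)\rvert=\Delta$, which is the largest possible absolute value of a minor. For any other column $v$, replacing $b_i$ by $v$ produces a maximal minor of absolute value at most $\Delta$, so the coordinates of $v$ with respect to $b_1,\dots,b_r$ all lie in $[-1,1]$. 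Thus every column lies in the symmetric parallelepiped $Q=\{\sum_i t_i b_i : \lvert t_i\rvert\le 1\}$, whose volume is $\vol(Q)=2^r\Delta$.

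The genericity hypothesis translates into \emph{general linear position}: no $k+1$ columns lie in a common $k$-dimensional linear subspace for any $1\le k\le r-1$, as otherwise some $r$ columns would be linearly dependent. A consequence I would use repeatedly is that, for $r\ge 3$, any affine line carries at most two columns --- three collinear columns would span, together with the origin, a plane and hence produce a vanishing maximal minor. I would therefore isolate the following statement, of which \cref{thm:sublinear-bound-generic-heller} is an immediate consequence: \emph{if $S\subseteq\Z^r$ lies in general linear position inside a symmetric convex body of volume $V$, then $\lvert S\rvert\le C\,r^3\,V^{2/r}$ for an absolute constant $C$}. Applied to the columns and to $Q$ this gives $\g(\Delta,r)\le C\,r^3\,(2^r\Delta)^{2/r}=4C\,r^3\,\Delta^{2/r}$, so that the whole problem is reduced to the displayed lemma and to tracking the constant $C$.

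The core is this lemma, and here I expect the real difficulty. A plain application of Minkowski's second theorem only controls the product $\lambda_1\cdots\lambda_r$ of the successive minima of $Q$ with respect to $\Z^r$, and the ensuing lattice-point bound $\prod_i(1+2/\lambda_i)$ is far too weak: it disregards general position and merely reproduces the linear estimate of \cref{thm:linear-upper-bound}. The mechanism that should produce the sublinear power $V^{2/r}$ is an induction on $r$ by projection. Projecting $\Z^r$ along a shortest primitive vector $u$ onto $\Z^{r-1}$ flattens $Q$ to a body whose lattice-normalised volume is $\vol(Q)\,\lambda_1/2\le V^{(r-1)/r}$, the inequality $\lambda_1\le 2V^{-1/r}$ being exactly Minkowski's first theorem; crucially, this power is chosen so that the inductive bound $C(r-1)^3\,(V^{(r-1)/r})^{2/(r-1)}=C(r-1)^3\,V^{2/r}$ reproduces the same exponent $2/r$. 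Since every fibre of the projection is an affine line, it carries at most two columns, so $S$ maps at most two-to-one onto its image.

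The hard part will be to keep the inductive losses from accumulating into an exponential factor and to guarantee that the projected set still satisfies a usable general-position hypothesis. The two-to-one collapse alone would contribute a factor $2$ at each of the $r$ steps, which is fatal; to obtain the polynomial constant $r^3$ I would instead choose the projection direction so that only few fibres carry two columns --- reducing the multiplicative factor $2$ to an additive error --- or replace the step-by-step projection by a single-shot argument in the basis furnished by the successive minima, where general position bounds the number of admissible coefficient vectors directly. The loss of general position in the image (the shortest vector $u$ need not be a column, so some $(r-1)$-subsets may become dependent after projection) is the other obstacle, which I would address by projecting along a well-chosen short \emph{column} and quantifying the resulting trade-off against the volume estimate $\vol(Q)=2^r\Delta$. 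Assembling these estimates, together with the factor $4$ from $(2^r\Delta)^{2/r}$ and the base case $r=2$ supplied by \cref{thm:linear-upper-bound}, should yield the explicit constant $130$.
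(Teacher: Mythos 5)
Your reduction is sound as far as it goes: placing all columns in the symmetric parallelepiped $Q$ of volume $2^r\Delta$ via Cramer's rule, and the observation that for $r\geq 3$ any affine line carries at most two columns, are both correct, and you have reverse-engineered the exponent bookkeeping ($V\mapsto V^{(r-1)/r}$ per projection step reproducing $V^{2/r}$) accurately. But the proof has a genuine gap: everything rests on the italicized lemma (general position in a symmetric body of volume $V$ forces $\lvert S\rvert\leq C r^3 V^{2/r}$), which you do not prove, and the two requirements of your projection step pull against each other. To obtain the volume reduction you must project along a vector $u$ with $\lambda_1(Q)\leq 2V^{-1/r}$ from Minkowski's first theorem; but such a $u$ is in general not a column, so a fibre may carry two columns \emph{and} the image may fail general position, since $r-1$ columns lying in a linear hyperplane containing $u$ is not excluded by genericity. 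Conversely, projecting along a column $c$ repairs both defects (two columns $v$ and $v+tc$ in one fibre give the dependent triple $\{v,v+tc,c\}$, and $r-1$ dependent image points together with $c$ give $r$ dependent columns), but then the only metric information available is $c\in Q$, i.e.\ the chord of $Q$ in direction $c$ has lattice length at least $2$; this yields only $\vol_{r-1}(\pi(Q))\leq c_r\vol(Q)$ with no gain in the exponent, and the induction then collapses to a bound linear in $\Delta$. You flag both obstacles but resolve neither, and within the ``general position plus volume'' framework I do not see how both can be resolved simultaneously.

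The ingredient your reduction discards is precisely what the paper's proof runs on: the quantitative minor bound, not just its qualitative consequence of general position. The paper slices $\conv(A)$ by the $\w(\Delta,r)+1\leq 3r\Delta^{1/r}+1$ affine lattice hyperplanes orthogonal to a lattice-width direction (\cref{lem:slice-reduction}, with the width controlled in \cref{lem:w-delta-asymptotics} by a Makai-type volume--width inequality applied to $\conv(A\cup(-A))$), and the crucial point is that the slice at lattice height $\ell\neq 0$ projects to a set in $\Z^{r-1}$ in general position with \emph{simplex-volume at most $\Delta/\abs{\ell}$}. This decay in $\ell$ is invisible to ``general position inside a body of volume $V$'', yet it is exactly what produces the exponent: each such slice contributes $O(r^2(\Delta/\abs{\ell})^{1/(r-1)})$ points, and summing $\abs{\ell}^{-1/(r-1)}$ over $O(r\Delta^{1/r})$ values of $\ell$ (\cref{lem:riemann-sum}) gives $\Delta^{1/(r-1)}\cdot\Delta^{(r-2)/(r(r-1))}=\Delta^{2/r}$. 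Your proposed lemma has a strictly weaker hypothesis than the statement the paper actually proves; whether it is even true is unclear, and nothing in the sketch supplies a proof of it.
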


\noindent The upper bounds in \cref{thm:linear-upper-bound,thm:sublinear-bound-generic-heller} are proven in \cref{sect:upper-bounds} using tools from coding theory and the geometry of numbers, respectively.

We complement these results by lower bound constructions that are best possible for $r=2$, and that imply that the exponent $\frac{2}{r}$ in \cref{thm:sublinear-bound-generic-heller} is correct up to a constant factor.
The essence of these constructions, whose details are layed out in \cref{sect:constr-two-rows} and \cref{prop:generic-gen-heller-lower-bound}, is subsumed into the following (slightly informal) statement:

\begin{theorem}
\label{thm:lower-bounds-subsumed}\
\begin{enumerate}[label=(\roman*)]
 \item For $r = 2$, there are at least three infinite families of integers~$\Delta$ for which the bound~\eqref{eqn:linear-bound} is attained.
 \item For fixed $r \geq 3$, the function $\Delta \mapsto \g(\Delta,r)$ grows at least with the order $\Omega(\Delta^\frac{1}{r-1})$.
\end{enumerate}
\end{theorem}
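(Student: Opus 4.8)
The plan is to prove the two parts separately, matching the two upper bounds from the main theorems.

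For part (i), the task is to exhibit, for $r=2$, infinitely many values of~$\Delta$ for which a generic $\Delta$-modular matrix $A \in \Z^{2 \times n}$ attains $n = \max\{2,p\}+1 = p+1$ columns, where $p$ is the smallest prime exceeding~$\Delta$. Since a $2 \times 2$ minor here is just the determinant of a pair of columns, the condition to satisfy is that every pair of the $n$ columns in $\Z^2$ spans a parallelogram of nonzero area at most~$\Delta$, and that $\Delta$ itself is realized. The natural strategy is to look for column sets whose pairwise determinants are controlled by reduction modulo a prime: reducing the columns modulo~$p$ lands them in $\F_p^2$, and the nonvanishing of all $2 \times 2$ minors corresponds to the reduced columns being pairwise linearly independent over~$\F_p$, i.e.\ representing distinct points of the projective line $\mathbb{P}^1(\F_p)$, which has exactly $p+1$ points. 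The matching upper bound tells us $p+1$ is the ceiling, so I would look for explicit lifts of a full set of $p+1$ projective representatives to integer vectors whose actual determinants stay bounded by~$\Delta$. I expect the three infinite families to arise from three different arithmetic regimes for how $\Delta$ sits relative to~$p$ (for instance, $\Delta = p-1$, and two further residue-type or Farey-type constructions); the verification that each family achieves the bound reduces to a finite determinant computation organized by the modular structure.

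For part (ii), I want to show $\g(\Delta,r) = \Omega(\Delta^{1/(r-1)})$ for fixed $r \geq 3$, which I would obtain from the construction behind \cref{prop:generic-gen-heller-lower-bound}. The idea is to build an explicit family of generic matrices whose column count grows like $\Delta^{1/(r-1)}$ while keeping all $r \times r$ minors nonzero and at most~$\Delta$ in absolute value. A clean way to do this is to take columns of a Vandermonde-type or moment-curve form, say $a_i = (1, t_i, t_i^2, \ldots, t_i^{r-1})^\intercal$ for suitably chosen integer nodes $t_1 < \cdots < t_n$. For such points every $r \times r$ minor is a Vandermonde determinant $\prod_{j<k}(t_k - t_j)$ over the chosen indices, which is automatically nonzero (genericity is free) and can be bounded above in terms of the spread of the nodes. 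If I choose the $n$ nodes to be as tightly packed as possible — consecutive integers, or integers in an interval of length $O(n)$ — then the largest $r \times r$ minor is on the order of the product of pairwise differences among~$r$ nodes, which scales like $n^{\binom{r}{2}}$ up to constants. Setting this equal to~$\Delta$ and solving for~$n$ gives $n = \Theta(\Delta^{1/\binom{r}{2}})$, and one checks this exponent; the claimed $\Delta^{1/(r-1)}$ suggests instead choosing the nodes in a geometric or lacunary progression so that the dominant minor is governed only by the top factor rather than the full product, tuning the growth rate to hit exponent $\frac{1}{r-1}$.

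The main obstacle I anticipate is twofold. For part (i), the delicate point is not producing $p+1$ projectively distinct columns — that is forced — but controlling the \emph{integer} determinants of the lifted vectors so that they never exceed~$\Delta$ while simultaneously achieving the value~$\Delta$; this is where the three families separate, since different lifts are optimal for different arithmetic shapes of~$\Delta$, and each case needs its own determinant bookkeeping. For part (ii), the crux is the optimization of the node placement: one must choose the $t_i$ so that genericity is preserved (trivial for distinct nodes) \emph{and} the maximal minor is as small as possible for a given column count, balancing the competing pairwise differences so that the largest product is minimized; getting the exponent exactly $\frac{1}{r-1}$ rather than the naive $\frac{1}{\binom{r}{2}}$ is the quantitatively subtle step, and I would expect it to require a non-uniform (e.g.\ geometrically spaced) choice of nodes together with a careful asymptotic estimate of the resulting determinants.
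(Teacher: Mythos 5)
Your part (i) is a plan rather than a proof, but the plan is the right one and matches the paper's: the three families correspond exactly to the arithmetic regimes $\Delta = p-1$ ($\Delta$ even, $\Delta+1$ prime), $\Delta = p-2$ ($\Delta$ odd, $\Delta+2$ prime) and $\Delta = p-3$ ($\Delta$ even, $\Delta \equiv 2 \bmod 3$, $\Delta+3$ prime), and in each case one exhibits an explicit set of $p+1$ integer columns realizing all points of $\mathbb{P}^1(\mathbb{F}_p)$ with determinants bounded by $\Delta$. What you have not supplied is the actual content: the matrices \eqref{eqn:Delta+2Ex} and \eqref{eqn:Delta+3Ex} handle the first two families easily, but the third family requires the genuinely nontrivial construction of \cref{prop:r2-construction-family3} (columns with first coordinate in $\{0,1,2,3\}$ and carefully chosen ranges of second coordinates, split into the residues $2$ and $8$ of $\Delta$ modulo $12$), together with a full case analysis of the $2\times 2$ minors. ``A finite determinant computation organized by the modular structure'' is where all the work lives, so part (i) as written is incomplete.

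Part (ii) contains a genuine gap. You correctly compute that the plain integer moment curve with nodes $t_1 < \cdots < t_n$ has $r\times r$ minors equal to Vandermonde products $\prod_{j<k}(t_{i_k}-t_{i_j})$, and that packing the nodes into an interval of length $O(n)$ forces a maximal minor of order $n^{\binom{r}{2}}$, yielding only $n = \Theta(\Delta^{1/\binom{r}{2}})$. Your proposed repair --- geometric or lacunary spacing of the nodes so that ``the dominant minor is governed only by the top factor'' --- cannot work: for any $n$ distinct integers, an $r$-subset chosen equally spaced by index has consecutive gaps of size at least about $n/r$, so \emph{every} node configuration admits an $r\times r$ minor of size at least $(n/r)^{\binom{r}{2}}$; spreading the nodes out only makes this worse. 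The exponent $\frac{1}{r-1}$ is unreachable by node placement alone. The missing idea (used in \cref{prop:generic-gen-heller-lower-bound}) is to reduce the moment curve coordinatewise modulo a prime $p$: take columns $(1,[t]_p,[t^2]_p,\ldots,[t^{r-1}]_p)^\intercal$ for $t=1,\ldots,p$, with representatives in $\{1,\ldots,p\}$. Genericity survives because each minor is a Vandermonde determinant over $\mathbb{F}_p$ with distinct nodes, hence nonzero modulo $p$ and a fortiori nonzero over $\Z$; and since all entries are now bounded by $p$, Hadamard's inequality (after subtracting the first column) bounds every minor by $(\sqrt{r-1}\,(p-1))^{r-1}$ --- exponent $r-1$, not $\binom{r}{2}$. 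This gives $\g\bigl(\lceil(r-1)^{(r-1)/2}\rceil (p-1)^{r-1}, r\bigr) \geq p$ and hence the claimed $\Omega(\Delta^{1/(r-1)})$. It is worth noting that you invoked exactly this mod-$p$ lifting philosophy in part (i) but did not carry it over to part (ii), where it is indispensable.
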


\noindent We suspect that these three families are the only infinite families meeting the bound~\eqref{eqn:linear-bound}.
\noindent Additionally to these theoretical results on the asymptotic behavior of the counting function $(\Delta,r) \mapsto \g(\Delta,r)$, we devised an algorithm to compute $\g(\Delta,r)$ for small parameters~$r$ and~$\Delta$.
The algorithm, its specifications and the computational results obtained will be described in detail in \cref{sect:computational-approach}.

\medskip
Of course, the column number question has been investigated also for general, not necessarily generic, $\Delta$-modular integer matrices.
The corresponding counting function is
\[
\h(\Delta,r) := \max\left\{ n : \exists \textrm{ a }\Delta\textrm{-modular }A \in \Z^{r \times n}\textrm{ with} \rank(A) = r \right\}.
\]
Note that in contrast to $\g(\Delta,r)$ this allows linear dependencies among sets of~$r$ columns in the considered matrices, and in particular, it allows for parallel columns.
We shortly mention the best-known bounds on this parameter and refer to the cited papers for more background and literature.

A construction of a $\Delta$-modular matrix with many columns in~\cite{leepaatstallknechtxu2022polynomial} shows that
\begin{align}
r^2 + r + 1 + 2r(\Delta - 1) \leq \h(\Delta,r), \textrm{ for all } \Delta,r \in \Z_{>0}.\label{lee-et-at-general-bound}
\end{align}
Furthermore, the authors of~~\cite{leepaatstallknechtxu2022polynomial} prove that $\h(\Delta,r) \leq (r^2+r)\Delta^2 + 1$ and that the lower bound in~\eqref{lee-et-at-general-bound} is tight whenever $\Delta \leq 2$ or $r \leq 2$.
They also conjectured that the identity $\h(\Delta,r) = r^2 + r + 1 + 2r(\Delta - 1)$ holds for \emph{every}~$r$ and~$\Delta$, which has been disproven in~\cite{averkovschymura2023onthemaximal} for $\Delta \in \{4,8,16\}$ and~$r$ large enough.
However, the quantitative version of this conjecture, that is, whether $\h(\Delta,r) \leq r^2 + \cO(r\cdot\Delta)$, still stands.
A step in this direction was achieved in~\cite{averkovschymura2023onthemaximal} (see also~\cite{averkovschymura2022maximal-ipco} for a less technical extended abstract), in which a bound of order $\h(\Delta,r) \in \cO(r^4) \cdot \Delta$, for $r \geq 5$ and $\Delta \in \Z_{>0}$ was obtained.
Together with~\eqref{lee-et-at-general-bound} this shows that, for fixed~$r$, the function $\Delta \mapsto \h(\Delta,r)$ grows linearly.
%

Although the function $\h(\Delta,r)$ is apparently closely connected to the maximal possible number of irredundant inequalities in a general $\Delta$-modular integer program, allowing for parallel columns in the definition of a $\Delta$-modular integer matrix prohibits the exact relationship analogous to~\eqref{eqn:g-facet-number}.
This can be circumvented by considering what we call \emph{simple}\footnote{We borrow this term from matroid theory, as the matroid represented by a simple matrix is a simple matroid.} matrices, in which only non-zero and pairwise non-parallel columns are allowed.
We denote the counting function by
\[
\s(\Delta,r) := \max\left\{ n : \exists \textrm{ a simple }\Delta\textrm{-modular }A \in \Z^{r \times n}\textrm{ with} \rank(A) = r \right\},
\]
and observe that now the exact relationship
\begin{align}
\max\left\{ \fct(P(B,d)) : B^\intercal \in \Z^{r \times n} \textrm{ is } \Delta\textrm{-modular}, d \in \Z^n \right\} = 2 \cdot \s(\Delta,r).\label{eqn:s-facet-number}
\end{align}
holds, analogous to~\eqref{eqn:g-facet-number} in the generic case.
Since every generic matrix is necessarily simple and simplicity restricts the general case, we have
\[
\g(\Delta,r) \leq \s(\Delta,r) \leq \h(\Delta,r) \qquad \textrm{and} \qquad \s(\Delta,2) = \g(\Delta,2).
\]
In particular, the upper bounds on~$\h(\Delta,r)$ discussed previously also apply to~$\s(\Delta,r)$.
Additionally, in~\cite{paatstallknechtwalshxu2022onthecolumn} the inequality
\[
\s(\Delta,r) \leq \tbinom{r+1}{2} + 80 \Delta^7 \cdot r, \textrm{ for every } \Delta \in \Z_{>0} \textrm{ and for } r \textrm{ sufficiently large},
\]
is derived.
Their arguments are based on matroid theory and are not applicable to~$\h(\Delta,r)$.
%

In conclusion, comparing the generic case, in particular \cref{thm:sublinear-bound-generic-heller}, to the general or simple setting, we see that $\g(\Delta,r)$ is separated from $\s(\Delta,r)$ and $\h(\Delta,r)$ in their asymptotic behavior, as expected.

\subsection*{Notations}

Here, we fix some notations that we use throughout the following sections.
For a positive integer~$r$, we write $[r] = \{1,2,\ldots,r\}$.
If $M \subseteq \R^r$ is a Lebesgue-measurable set, then we denote its Lebesgue measure, or \emph{volume}, by $\vol(M)$, and its \emph{normalized volume} by $\Vol(M) = r! \vol(M)$.
The symbol $\conv(S)$ stands for the \emph{convex hull} of a set $S \subseteq \R^r$, and if~$S$ is finite then $\card{S}$ denotes its cardinality.
For a matrix~$A$, we let $\rank(A)$ denote its rank, as used already above.
Finally, an integer matrix $U \in \Z^{r \times r}$ with determinant equal to~$1$ or~$-1$ is called a \emph{unimodular matrix}.

\section{Upper bounds on \texorpdfstring{$\g(\Delta,r)$}{g(\textDelta,r)}}
\label{sect:upper-bounds}

\subsection{A linear upper bound using finite fields}
\label{sect:mds}

A generic $\Delta$-modular matrix implying the lower bound $\g(\Delta,r) \geq r+1$ has columns $e_1,\ldots,e_r,(\Delta,\ldots,\Delta)^\intercal$.
In this part we aim to prove \cref{thm:linear-upper-bound}, which implies that this lower bound is best possible, if $r \geq 2\Delta - 1$.

Our proof of \cref{thm:linear-upper-bound} relies on a connection of generic $\Delta$-modularity with so-called \emph{MDS-codes}.
The famous MDS conjecture describes possible parameters of an optimal family of codes over the finite field with $q$ elements, denoted here by $\mathbb{F}_q$. It was first stated by Segre in the $1950$s in terms of arcs in finite geometry. 
For $q=p$ prime the MDS conjecture was proven by Ball~\cite{ball2012sets}: 

\begin{theorem}[{Ball~\cite[Cor.~11.1]{ball2012sets}}]
\label{thm:ball-mds}
Let $p$ be a prime and let $A\in\mathbb{F}_p^{r\times n}$ be a matrix with $\rank(A)=r\geq 2$ such that any set of $r$ columns of~$A$ is linearly independent.
Then $n\leq \max\{r, p\}+1$.
\end{theorem}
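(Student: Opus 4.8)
The plan is to translate the statement into projective geometry and then follow the strategy behind Ball's resolution of the MDS conjecture over prime fields. Up to scaling each column, the $n$ columns of $A$ become $n$ points of the projective space $\mathrm{PG}(r-1,p)$, and the hypothesis that every $r$ columns are linearly independent says precisely that no $r$ of these points lie in a common hyperplane; such a configuration is an \emph{arc}. The assertion to prove is then the sharp upper bound $n \le \max\{r,p\}+1$ on the size of an arc in $\mathrm{PG}(r-1,p)$.

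First I would dispose of the regime $r \ge p$ by a duality argument. Since all $r\times r$ minors of $A$ are nonzero, $A$ generates an MDS code, and its dual is an $[n,n-r]$ MDS code; concretely, a parity-check matrix $H \in \mathbb{F}_p^{(n-r)\times n}$ then has all $(n-r)\times(n-r)$ minors nonzero, so its columns form an arc in $\mathrm{PG}(n-r-1,p)$. For any arc in $\mathrm{PG}(k-1,p)$ there is the elementary bound $n \le p + k - 1$, obtained by projecting from the $(k-3)$-flat spanned by $k-2$ of the points onto a line $\mathrm{PG}(1,p)$: the remaining $n-(k-2)$ points map to distinct points of the line (distinctness because any $k$ arc points are independent), and a line has only $p+1$ points. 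Applying this to the dual configuration (of projective dimension $n-r-1$) gives $n \le p + (n-r) - 1$, that is $r \le p-1$. Contrapositively, if $r \ge p$ then the dual cannot be a genuine arc of dimension $\ge 2$, forcing $n - r \le 1$ and hence $n \le r+1 = \max\{r,p\}+1$.

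It therefore remains to treat $2 \le r \le p-1$ and to prove $n \le p+1$; this is the substantive case and the heart of the theorem. Here I would normalise the arc so that it contains the standard frame $e_1,\dots,e_r,\mathbf{1}$, assume for contradiction that $n \ge p+2$, and run the \emph{lemma of tangents}. Every hyperplane meets the arc in at most $r-1$ points, and each set of $r-1$ arc points spans a unique (tangent) hyperplane; fixing $r-2$ arc points and letting the last one vary produces a family of tangent hyperplanes through the common $(r-3)$-flat. Segre's insight, extended to arbitrary dimension, is that forming the product of the linear forms defining these tangents and comparing it along two different chains of points yields a multiplicative identity among field elements. Transcribed into coordinates, this identity forces the arc points to satisfy a single homogeneous polynomial relation whose degree is controlled by $n-p$, and a dimension count of the space of such relations against the number of points produces the desired contradiction once $n \ge p+2$.

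The main obstacle, and the step where I expect all the genuine difficulty to lie, is exactly this algebraic core: establishing the higher-dimensional lemma of tangents and extracting from it a polynomial with too few zeros. It is also the precise point at which the primality of $p$ is indispensable. The tangent-product identity produces a scalar that can be evaluated in two ways, and one must argue that a governing coefficient — a binomial-type quantity, equivalently the non-vanishing of a certain Hasse derivative — is nonzero in $\mathbb{F}_p$. Over a prime field the relevant range of binomial coefficients is nonzero modulo $p$, whereas over $\mathbb{F}_q$ with $q$ composite the analogous quantity can vanish, which is exactly why the conjecture remains open there. At this juncture I would invoke a Combinatorial Nullstellensatz-type counting together with the subfield structure of the ground field, rather than attempt a self-contained derivation, since no elementary treatment of this case is currently known.
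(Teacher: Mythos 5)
This statement is not proven in the paper at all: it is Ball's resolution of the MDS conjecture over prime fields, quoted verbatim from \cite[Cor.~11.1]{ball2012sets} and used as a black box. The authors only remark that the easy regimes (the bound $n \le r+p-1$ in general, and $n \le r+1$ when $p \le r$) are elementary, and that the entire difficulty of the theorem sits in the case $p > r$. So the relevant comparison is between your sketch and Ball's published proof, not any argument internal to this paper.

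Your outline gets the architecture right: the translation of the column set into an arc in $\mathrm{PG}(r-1,p)$, the disposal of $r \ge p$ via the dual MDS code together with the elementary projection bound $n \le p + k - 1$ (which correctly forces $r \le p-1$ whenever $n-r \ge 2$, hence $n \le r+1$ when $r \ge p$), and the identification of $2 \le r \le p-1$ with target $n \le p+1$ as the substantive case. But you do not prove that substantive case. The entire load-bearing step --- the higher-dimensional lemma of tangents, the product identity obtained by comparing two orderings of a frame, the resulting polynomial interpolation, and above all the verification that the governing binomial-type coefficient does not vanish modulo $p$ --- is only described in prose, and your closing paragraph concedes that you would ``invoke'' rather than derive it. The appeal to ``the subfield structure of the ground field'' is also off target: $\mathbb{F}_p$ has no proper subfields, and the role of primality in Ball's argument is rather that the hypothesis $k \le \mathrm{char}\,\mathbb{F}_q$ of his main theorem automatically covers every $k \le q$ once $q = p$. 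As a roadmap to Ball's proof your text is accurate; as a proof it has a gap exactly where the theorem is hard, and the only realistic way to close it at the level of this paper is to do what the paper does and cite \cite{ball2012sets}.
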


Note that the weaker bound $n\leq r + p - 1$, for arbitrary~$p$ and~$r$ as above, and the bound $n \leq r+1$, for $p \leq r$, are much easier to obtain (cf.~\cite[Lem.~1.2 \& Lem.~1.3]{ball2012sets}).
The case $p > r$ is where the difficulty in \cref{thm:ball-mds} lies.
We refer the interested reader to the book~\cite{ball2015finite} for an excellent reference on MDS codes and their connections to finite geometry.

\begin{proof}[Proof of \cref{thm:linear-upper-bound}]
Let $A\in\Z^{r\times n}$ be a generic $\Delta$-modular matrix with $\rank(A) = r$.
We interpret $A$ as a matrix over $\mathbb F_p$.
Let $A_I$ be any $r\times r$ submatrix of~$A$, then 
\begin{equation*}
  0 < \abs{\det(A_I)} \leq \Delta < p
\end{equation*}
as integers and thus $\det(A_I)$ is nonzero in~$\mathbb F_p$.
Therefore, any set of $r$ columns of $A$ is linearly independent over~$\mathbb F_p$ and by applying \cref{thm:ball-mds} we obtain $n\leq \max\{r, p\}+1$, which implies the claim.

Using Bertrand's postulate on the existence of a prime~$p$ with $\Delta < p < 2\Delta$, for $\Delta \geq 2$, shows that $\g(\Delta,r) \leq 2 \Delta$, for every $r \geq 2$, and $\g(\Delta,r) = r+1$, if $r \geq 2\Delta - 1$.
\end{proof}

In the coding theory setting, generator matrices of so-called \emph{Reed-Solomon codes} meet the upper bound of \cref{thm:ball-mds} (cf.~\cite[Cor.~9.2]{ball2012sets}).
However, in general, Reed-Solomon codes do not translate into generic $\Delta$-modular integer matrices.
In fact, in the integer setting the bound of \cref{thm:linear-upper-bound} is not always tight, which we demonstrate numerically for $r=2$ in \cref{sect:numerical-results}, and prove for $r\geq 3$ in the following subsection.

Nonetheless we use Reed-Solomon codes implicitly to derive a construction of order $\Omega(\Delta^{\frac{1}{r-1}})$ in \cref{sect:vandermonde-construction}. This construction is stated in terms of Vandermonde-matrices which, in fact, generate Reed-Solomon codes.

\subsection{A sublinear upper bound for fixed \texorpdfstring{$r \geq 3$}{r >= 3} using geometry of numbers}

In this part, we prove \cref{thm:sublinear-bound-generic-heller}.
The main idea of our argument is the following:
Given a generic $\Delta$-modular matrix $A \in \Z^{r \times n}$, considered as a subset of $\Z^r$ with cardinality~$n$, we upper bound~$n$ by counting the points in~$A$ by containment in a family of parallel affine hyperplanes.
The subsets of~$A$ in each of these hyperplanes are points in general position and that satisfy a volume condition in terms of the parameter~$\Delta$.
These point sets in $r-1$ dimensions again can be counted by containment in a family of parallel hyperplanes, only that now at most $r-1$ of the points can be contained in a given such hyperplane by the general position condition.
For the envisioned bound, we need to find a family of parallel hyperplanes of small cardinality and that covers the points in~$A$.
This is achieved by using the $\Delta$-modularity in form of a volume-width-principle from the geometry of numbers.

The sketched argument can be implemented in a series of lemmas.
First of all, let $S \subseteq \Z^r$ be a finite set of lattice points.
\begin{itemize}
 \item $S$ is in \emph{general position}, if no $r+1$ of its points are contained in a common affine hyperplane, and 
 \item $S$ has \emph{simplex-volume at most~$\Delta$}, if for every $r+1$ points in~$S$ the lattice simplex spanned by those points has normalized volume at most~$\Delta$.
\end{itemize}
With this notation we define the constant
\[
\gv(\Delta,r) := \max\left\{ \abs{S} : S \subseteq \Z^r \textrm{ in general position with simplex-volume at most } \Delta \right\}.
\]
A simple observation is the identity $\gv(\Delta,1) = \Delta + 1$, realized by any set of $\Delta+1$ consecutive integers.
We will estimate both $\gv(\Delta,r)$ and $\g(\Delta,r)$ by counting points by parallel lattice hyperplanes in~$\R^r$.
In order to do so, we need the concept of lattice-width.
For a non-zero vector $v \in \R^r \setminus \{\zero\}$ the \emph{width} of a convex body~$K \subseteq \R^r$ in direction~$v$ is defined as
\[
\omega(K,v) = \max_{x \in K} x^\intercal v - \min_{x \in K} x^\intercal v.
\]
Minimizing the width over all non-zero lattice directions yields the \emph{lattice-width}
\[
\omega_L(K) = \min_{v \in \Z^r \setminus \{\zero\}} \omega(K,v).
\]
If $K$ is a \emph{lattice polytope}, meaning that $K = \conv(S)$, for a finite set $S \subseteq \Z^r$, then for $v \in \Z^r$ with $\gcd(v_1,\ldots,v_r) = 1$, the width of~$K$ in direction~$v$ can be understood as follows:
$\omega(K,v)+1$ is the number of parallel lattice-planes orthogonal to~$v$ and which intersect~$K$ non-trivially.

For any finite set $S \subseteq \R^r$ we write $P_S := \conv(S)$ for the polytope defined as the convex hull of the points in~$S$.
Coming back to $\Delta$-modularity, we define
\[
\w(\Delta,r) = \max\left\{ \omega_L(P_A) : A \subseteq \Z^r \textrm{ generic }\Delta\textrm{-modular} \right\}
\]
as the maximal lattice-width of a lattice polytope that arises as the convex hull of the columns of a generic $\Delta$-modular matrix with~$r$ rows.
Likewise, we define
\[
\wv(\Delta,r) = \max\left\{ \omega_L(P_S) : S \subseteq \Z^r \textrm{ in general position with simplex-volume}\leq\Delta \right\}
\]
as the maximal lattice-width of a lattice polytope defined by a subset of~$\Z^r$ in general position and of simplex-volume at most~$\Delta$.

Our motivation to study these quantities is their intimate relationship to the quantities $\g(\Delta,r)$ and $\gv(\Delta,r)$, which is expressed by the following lemma.
For its statement, we need the notation $\abs{\ell}_+ = \max\{1,\abs{\ell}\}$, for an integer $\ell \in \Z$.

\goodbreak
\begin{lemma}
\label{lem:slice-reduction}
For every $r,\Delta \in \N$, we have
\begin{enumerate}[label=(\roman*)]
 \item $\g(\Delta,r) \leq \sup_{a \in \Z} \sum_{\ell = a}^{a+ \w(\Delta,r)} \gv\left(\frac{\Delta}{\abs{\ell}_+},r-1\right)$ and
 \item $\gv(\Delta,r) \leq (\wv(\Delta,r) + 1) \cdot r$.
\end{enumerate}
\end{lemma}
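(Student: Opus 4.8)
The plan is to prove both inequalities by the same \emph{slicing principle}: fix a lattice direction realizing the lattice-width, cut the point set into the parallel lattice hyperplanes orthogonal to that direction, and bound the number of points contained in each slice. For part~(ii) this is essentially immediate. Let $S$ realize $\gv(\Delta,r)$ and pick a primitive $v\in\Z^r\setminus\{\zero\}$ attaining $\omega_L(P_S)$; one may take $v$ primitive because scaling a direction scales the width, so the minimum is attained at a primitive vector. Since $S\subseteq\Z^r$ and $v$ is integral, the points of $S$ are distributed over the integer levels $\{x:x^\intercal v=\ell\}$, and the occupied levels range over at most $\omega(P_S,v)+1=\omega_L(P_S)+1\leq\wv(\Delta,r)+1$ consecutive integers. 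Because $S$ is in general position, no affine hyperplane of $\R^r$, in particular none of these slices, contains more than $r$ of its points. Multiplying the two bounds gives $\gv(\Delta,r)=\abs{S}\leq(\wv(\Delta,r)+1)\cdot r$.

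For part~(i) I would run the same slicing on a generic $\Delta$-modular $A\subseteq\Z^r$, along a primitive $v$ with $\omega(P_A,v)=\omega_L(P_A)\leq\w(\Delta,r)$. Writing $A_\ell=A\cap\{x^\intercal v=\ell\}$ and $a=\min_{x\in A}x^\intercal v$, the nonempty slices occur for $\ell\in\{a,\ldots,a+\omega(P_A,v)\}\subseteq\{a,\ldots,a+\w(\Delta,r)\}$, so it suffices to show $\abs{A_\ell}\leq\gv(\Delta/\abs{\ell}_+,r-1)$ for each~$\ell$; summing over the slices and passing to the supremum over the starting index~$a$ then yields the claim, where the extra empty slices only enlarge the right-hand side since $\gv\geq 0$. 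As $v$ is primitive, $\{x\in\Z^r:x^\intercal v=\ell\}$ is a coset of the rank-$(r-1)$ lattice $\Lambda=\Z^r\cap v^\perp$, and I fix a unimodular identification $\phi_\ell$ of this coset with $\Z^{r-1}$; such a map preserves affine (in)dependence and normalized volumes. The two properties to verify for the image $\phi_\ell(A_\ell)$ are general position and the simplex-volume bound.

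General position holds for \emph{every} slice and is the soft part: if $r$ points of $A_\ell$ lay on a common $(r-2)$-dimensional affine subspace, they would be affinely dependent, and a nontrivial vanishing affine combination is in particular a nontrivial vanishing linear combination, so they would be linearly dependent, contradicting genericity of~$A$. For $\ell=0$ the same reasoning shows $\abs{A_0}\leq r-1$, since that slice lies in the linear hyperplane $v^\perp$ and any $r$ of its points are linearly dependent; as $\gv(\Delta,r-1)\geq r-1$ trivially (the simplex-volume condition is vacuous below $r$ points), the convention $\abs{0}_+=1$ covers this term.

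The crux, where the geometry of numbers genuinely enters, is the simplex-volume bound for $\ell\neq 0$. For affinely independent $p_1,\ldots,p_r\in A_\ell$ I aim for the identity
\[
\Vol_{r-1}\big(\conv(\phi_\ell(p_1),\ldots,\phi_\ell(p_r))\big)=\frac{\abs{\det(p_1,\ldots,p_r)}}{\abs{\ell}},
\]
which I would obtain by viewing the $r$-simplex $\conv(\zero,p_1,\ldots,p_r)$ as a pyramid over its base in the slice, so that its normalized $r$-volume $\abs{\det(p_1,\ldots,p_r)}$ factors as the distance $\abs{\ell}/\|v\|$ from the origin to the slice times the base $(r-1)$-volume, combined with the standard fact that the covolume of $\Lambda=\Z^r\cap v^\perp$ equals $\|v\|$ for primitive~$v$; the Euclidean factors $\|v\|$ cancel and leave exactly $\abs{\det(p_1,\ldots,p_r)}/\abs{\ell}$. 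Since $A$ is $\Delta$-modular this determinant is at most $\Delta$, so every simplex spanned by points of $\phi_\ell(A_\ell)$ has normalized volume at most $\Delta/\abs{\ell}$, giving $\abs{A_\ell}\leq\gv(\Delta/\abs{\ell}_+,r-1)$ as required; here one extends $\gv$ to a rational first argument, which changes nothing since normalized volumes of lattice simplices are integers. I expect the bookkeeping of these Euclidean normalizations to be the only delicate point, the rest being the pigeonhole packaging already used for part~(ii).
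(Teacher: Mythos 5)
Your proposal is correct and follows essentially the same route as the paper: slice along a lattice direction attaining the lattice-width, use genericity/general position to control each slice, and for part~(i) reduce each nonzero slice to a set in $\Z^{r-1}$ in general position with simplex-volume at most $\Delta/\abs{\ell}$ via the identity $\abs{\det(p_1,\ldots,p_r)} = \abs{\ell}\cdot\Vol_{r-1}(\conv(p_1,\ldots,p_r))$. The only (cosmetic) difference is that the paper first applies a unimodular transformation to normalize the width direction to $e_r$ and obtains this identity by elementary column operations on the determinant, whereas you keep the direction general and derive it from the pyramid formula together with the covolume $\det(\Z^r\cap v^\perp)=\lVert v\rVert$; both computations are valid and yield the same bound.
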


\begin{proof}
(i): Let $A \subseteq \Z^r$ be a generic $\Delta$-modular set of lattice points.
We seek to upper bound~$\abs{A}$.
Let $v \in \Z^r$ be a direction that attains the lattice-width of $P_A$, meaning that $\omega_L(P_A) = \omega(P_A,v)$.
We may apply a suitable unimodular transformation and assume that $v = e_r$.
Writing $L = e_r^\perp$ for the hyperplane orthogonal to~$e_r$, we find that each point of~$A$ is contained in one of the $\omega_L(P_A)+1$ parallel lattice planes
\[
L + a e_r, L + (a+1) e_r, \ldots, L + b e_r,
\]
where $a,b \in \Z$ are such that $a < b$ and $b-a = \omega_L(P_A)$.

For $\ell \in \{a,a+1,\ldots,b\}$ we count the points in $A \cap (L + \ell e_r)$.
If $\ell=0$, then $\abs{A \cap L} \leq r-1$, because $A$ is assumed to be generic, so no~$r$ points of~$A$ can be contained in the same linear subspace.
Also note that $r-1 \leq \gv(1,r-1) \leq \gv(\Delta,r-1)$.

So, we may assume that $\ell \neq 0$ and we let $C_\ell = A \cap (L + \ell e_r)$ be the set of points in~$A$ with last entry equal to~$\ell$.
Further, let $S_\ell \subseteq \Z^{r-1}$ be the projection of~$C_\ell$ that forgets the last coordinate.
The set~$S_\ell$ is in general position because~$A$ is generic.
Moreover, let $c_1,\ldots,c_r \in C_\ell$ be a choice of~$r$ lattice points in~$C_\ell$ with corresponding projections $s_1,\ldots,s_r \in S_\ell$.
Then,
\begin{align*}
\Delta \geq \abs{\det(c_1,\ldots,c_r)} &= \abs{\det\left(\begin{matrix}
s_1 & s_2 & \ldots & s_r \\ \ell & \ell & \ldots & \ell
\end{matrix}\right)}
= \abs{\ell} \cdot \abs{\det(s_2-s_1,\ldots,s_r-s_1)}\\
&= \abs{\ell} \cdot \Vol_{r-1}(\conv(\{s_1,\ldots,s_r\})) > 0.
\end{align*}
This implies that the set $S_\ell \subseteq \Z^{r-1}$ has simplex-volume at most $\frac{\Delta}{\abs{\ell}} \leq \Delta$ and thus
$\abs{C_\ell} = \abs{S_\ell} \leq \gv(\Delta/\abs{\ell},r-1)$.

In summary we obtain the desired inequality
\[
\abs{A} = \sum_{\ell = a}^b \abs{A \cap (L + \ell e_r)} \leq \sum_{\ell = a}^b \gv\left(\frac{\Delta}{\abs{\ell}_+},r-1\right) \leq \sup_{a \in \Z} \sum_{\ell = a}^{a+ \w(\Delta,r)} \gv\left(\frac{\Delta}{\abs{\ell}_+},r-1\right),
\]
where the notation $\abs{\ell}_+$ takes care of the case $\ell=0$.

(ii): The claimed upper bound on $\gv(\Delta,r)$ follows from a similar argument as the one in part (i).
Let $S \subseteq \Z^r$ be a point set in general position and with simplex-volume at most~$\Delta$.
Further, let $v \in \Z^r$ be a direction attaining the lattice-width of~$P_S$.
Since~$S$ is in general position, each hyperplane can contain at most~$r$ points of~$S$, and hence
\[
\abs{S} \leq r \cdot (\omega(P_S,v) + 1) = r \cdot (\omega_L(P_S) + 1) \leq r \cdot (\wv(\Delta,r) + 1),
\]
and we are done.
\end{proof}

Hence, in order to upper bound the function $\g(\Delta,r)$, we need to upper bound both $\w(\Delta,r)$ and $\wv(\Delta,r)$.
We do so by applying a volume argument which is based on the intuition that a convex body of bounded volume necessarily has small width in \emph{some} direction.
This intuition is made precise by the following two inequalities:
\begin{align}
\vol(K) &\geq \left(\frac{\pi}{8}\right)^r \frac{r+1}{2^r r!}\, \omega_L(K)^r,\quad \textrm{ for every convex body } K \subseteq \R^r,\label{eqn:makai-conj}
\end{align}
and
\begin{align}
\vol(K) &\geq \left(\frac{\pi}{4}\right)^r \frac{1}{r!}\, \omega_L(K)^r,\quad \textrm{ for every convex body } K \subseteq \R^r \textrm{ with } K=-K.\label{eqn:makai-conj-symmetric}
\end{align}
The condition $K=-K$ in~\eqref{eqn:makai-conj-symmetric} means that~$K$ is invariant under reflecting any of its points in the origin.
These asymptotic bounds can be derived from the asymptotics on Mahler's famous conjecture on the minimal volume-product of convex bodies with the symmetry in~\eqref{eqn:makai-conj-symmetric}.
Makai Jr.~\cite{makai1978on} conjectures that the factor $(\frac{\pi}{8})^r$ in~\eqref{eqn:makai-conj} and $(\frac{\pi}{4})^r$ in~\eqref{eqn:makai-conj-symmetric} can be replaced by one.
%
We refer the reader to~\cite[Thm.~II]{alvarezpaivabalachefftzanev2016isosystolic} and~\cite{gonzalezschymura2017ondensities} in which these bounds are described and more background is given.

With these tools at hand, we can now determine the asymptotics of the parameters $\w(\Delta,r)$ and $\wv(\Delta,r)$:

\goodbreak
\begin{lemma}
\label{lem:w-delta-asymptotics}
For every $\Delta,r \in \N$, we have
\begin{enumerate}[label=(\roman*)]
 \item $\wv(\Delta,r) \leq 6 r \Delta^{\frac{1}{r}}$ and
 \item $\w(\Delta,r) \leq 3 r \Delta^{\frac{1}{r}}$.
\end{enumerate}
Moreover, we have $\wv(\Delta,r) \in \Theta(\Delta^{\frac{1}{r}})$ and $\w(\Delta,r) \in \Theta(\Delta^{\frac{1}{r}})$.
\end{lemma}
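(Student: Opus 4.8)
The plan is, in both parts, to produce a convex body $K$ that contains the relevant lattice polytope and whose volume is at most $\Delta$ times a factor depending only on $r$, and then to invoke the width--volume inequalities~\eqref{eqn:makai-conj} and~\eqref{eqn:makai-conj-symmetric}, which convert such a volume bound into a lattice-width bound. Throughout I may assume the configuration is full-dimensional, since otherwise the polytope lies in a rational hyperplane and has lattice-width~$0$, so the asserted bound is trivial.

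For part~(i), I would take a maximal-volume simplex $T = \conv(w_0,\dots,w_r)$ with vertices in $S$; the simplex-volume hypothesis gives $\vol(T) \le \Delta/r!$. The classical fact that a convex body is contained in the $(-r)$-homothet about the centroid of any maximal inscribed simplex then yields $P_S \subseteq T'$ with $\vol(T') = r^r\vol(T) \le \frac{r^r}{r!}\Delta$. Plugging this into the general-body inequality~\eqref{eqn:makai-conj} and solving for $\omega_L$ gives $\omega_L(P_S)^r \le \frac{(16r)^r}{(r+1)\pi^r}\Delta$, hence $\omega_L(P_S) \le \frac{16r}{\pi}\Delta^{1/r} \le 6r\Delta^{1/r}$, since $16/\pi < 6$.

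For part~(ii), the decisive idea is to symmetrize, so that the stronger inequality~\eqref{eqn:makai-conj-symmetric} becomes available. I would pick columns $v_1,\dots,v_r \in A$ maximizing $D := \abs{\det(v_1,\dots,v_r)} \le \Delta$, and write each $a \in A$ as $a = \sum_i t_i v_i$; by Cramer's rule $t_i = \det(v_1,\dots,a,\dots,v_r)/D$, whose numerator is itself the determinant of $r$ columns of $A$ and is therefore at most $D$ in absolute value, so $\abs{t_i} \le 1$. Thus $A$, and by symmetry $-A$, lie in the origin-symmetric parallelepiped $\Pi = \{\sum_i t_i v_i : \abs{t_i} \le 1\}$, giving $K := \conv(A \cup -A) \subseteq \Pi$ and $\vol(K) \le \vol(\Pi) = 2^r D \le 2^r\Delta$. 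Since $P_A \subseteq K = -K$, inequality~\eqref{eqn:makai-conj-symmetric} gives $\omega_L(P_A)^r \le \omega_L(K)^r \le (8/\pi)^r\, r!\,\Delta$, and using $(r!)^{1/r} \le r$ together with $8/\pi < 3$ I obtain $\omega_L(P_A) \le 3r\Delta^{1/r}$.

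For the $\Theta$-claims the upper bounds already supply the $O(\Delta^{1/r})$ direction, so only matching lower bounds remain; here I would exploit that lattice-width is homogeneous, $\omega_L(mQ) = m\,\omega_L(Q)$, and that every full-dimensional lattice polytope $Q$ satisfies $\omega_L(Q) \ge 1$. Taking $m = \lfloor \Delta^{1/r} \rfloor$, the scaled simplex $m\{0,e_1,\dots,e_r\}$ is in general position with simplex-volume $m^r \le \Delta$ and width $\ge m$, so $\wv(\Delta,r) = \Omega(\Delta^{1/r})$; monotonicity in $\Delta$ is free because the defining condition is an inequality. Likewise $m\{e_1,\dots,e_r,\one\}$ (for $r \ge 2$; the case $r=1$ is elementary) is generic and exactly $m^r$-modular of width $\ge m$, giving $\w(m^r,r) \ge m$. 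To pass from perfect $r$-th powers to an arbitrary $\Delta$ I would establish the monotonicity $\w(\Delta',r) \le \w(\Delta,r)$ for $\Delta' \le \Delta$ by adjoining to a generic $\Delta'$-modular set one further column that raises the largest minor to exactly $\Delta$ while keeping all minors at most $\Delta$ and all $r$-subsets independent; enlarging the point set cannot shrink the lattice-width. I expect this realization of the modularity \emph{exactly}, rather than only up to a perfect power, to be the one genuinely delicate point; the upper bounds are otherwise routine once the two containments above are in place, the sole subtlety being the symmetrization in~(ii) that improves the constant from $6$ to $3$.
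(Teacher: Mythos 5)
Your two upper bounds are correct and follow essentially the paper's route. Part (i) is identical: maximal inscribed simplex, the Lagarias--Ziegler containment in a translate of $(-r)T$, then inequality~\eqref{eqn:makai-conj}. In part (ii), your ``maximize $\abs{\det(v_1,\dots,v_r)}$ and apply Cramer's rule'' argument is the paper's ``maximal inscribed crosspolytope'' argument in different clothing: the crosspolytope $\conv\{\pm v_1,\dots,\pm v_r\}$ has volume $\tfrac{2^r}{r!}\abs{\det(v_1,\dots,v_r)}$, so maximizing its volume is maximizing the determinant, and the paper's contradiction step for a vertex with $\abs{\beta_j}>1$ is exactly your Cramer computation. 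The constants come out the same, and your handling of the degenerate lower-dimensional cases is fine.

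For the $\Theta$-claims you genuinely diverge. The paper proves the scaling inequality $\ell\cdot\w(\Delta,r)\le\w(\ell^r\Delta,r)$ (and likewise for $\wv$) and rules out $o(\Delta^{1/r})$ by contradiction; your explicit dilated simplices give the same conclusion more concretely, namely $\wv(\Delta,r)\ge\lfloor\Delta^{1/r}\rfloor$ for every $\Delta$ (monotonicity is indeed free for $\wv$) and $\w(m^r,r)\ge m$ for every $m$, which already suffices for what the paper's own argument establishes. However, the step you flag as delicate is in fact where your stronger, pointwise claim breaks: you cannot in general pass from $\Delta'=m^r$ to an arbitrary $\Delta\ge\Delta'$ by adjoining a single column. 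For the set $m\{e_1,\dots,e_r,\one\}$ every column is divisible by $m$, so every $(r-1)\times(r-1)$ minor of any $r-1$ of its columns lies in $m^{r-1}\Z$, and hence so does $\det(a_1,\dots,a_{r-1},u)$ for any adjoined $u$; if $m^{r-1}\nmid\Delta$ (e.g.\ $r=2$, $m=2$, $\Delta=5$, starting from $\{(2,0),(0,2),(2,2)\}$) no single column can raise the largest minor to exactly $\Delta$. So the monotonicity $\w(\Delta',r)\le\w(\Delta,r)$ remains unproven; if you want a lower bound $\w(\Delta,r)\ge c\,\Delta^{1/r}$ valid for \emph{every} $\Delta$, you need a construction realizing the modularity $\Delta$ exactly, which is a separate problem (compare the exact-value constructions in the paper's Section on lower bounds). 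This caveat does not affect the upper bounds, which are the only parts of the lemma used in the proof of the main sublinear theorem.
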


\begin{proof}
(i): Let $S \subseteq \Z^r$ be a point set in general position and with simplex-volume bounded by~$\Delta$.
Let $T \subseteq P_S$ be a simplex of maximal volume contained in~$P_S$.
It is known that~$T$ can be chosen to have all its vertices as vertices of~$P_S$, and by a result of Lagarias \& Ziegler~\cite[Thm.~3]{lagariasziegler1991bounds}, there is a translate of $(-r)T$ that contains~$P_S$.

Since we assumed~$S$ to have simplex-volume bounded by~$\Delta$, we have that $\Vol(T) \leq \Delta$, and thus $r!\vol(P_S) = \Vol(P_S) \leq \Vol((-r)T) = r^r \Vol(T) \leq r^r \Delta$.
Using the asymptotic result~\eqref{eqn:makai-conj} for the lattice polytope~$P_S$, yields
\[
\omega_L(P_S)^r \leq \left(\frac{8}{\pi}\right)^r \frac{2^r r!}{r+1}  \vol(P_S) \leq \left(\frac{8}{\pi}\right)^r \frac{2^r r^r}{r+1} \Delta,
\]
and thus
\[
\omega_L(P_S) \leq \frac{16}{\pi} \frac{r}{(r+1)^{\frac{1}{r}}} \Delta^{\frac{1}{r}} \leq 6 r \Delta^{\frac{1}{r}}.
\]
(ii): Let $A \subseteq \Z^r$ be a generic $\Delta$-modular set of lattice points.
Again we argue similarly as in part (i) and aim to estimate the volume of~$P_A$.
In contrast to the situation of sets in general position, we need to make a detour via the symmetric polytope $Q_A := P_{A \cup (-A)}$ spanned by the points in~$A$ and their negatives.
Clearly, $P_A \subseteq Q_A$ and thus $\omega_L(P_A) \leq \omega_L(Q_A)$.

Now, let $C \subseteq Q_A$ be a crosspolytope of maximal volume contained in~$Q_A$.
Again we can assume that $C$ has all its vertices as vertices of~$Q_A$, and in particular, we can write $C = \conv\{\pm a_1,\ldots,\pm a_r\}$ for some linearly independent $a_1,\ldots,a_r \in A$.
Note that the volume of~$C$ is given by $\vol(C) = \frac{2^r}{r!}\abs{\det(a_1,\ldots,a_r)}$.
Consider the parallelepiped $W = \left\{\sum_{i=1}^r \gamma_i a_i : -1 \leq \gamma_i \leq 1, \textrm{ for } 1 \leq i \leq r \right\}$.

We claim that $Q_A \subseteq W$.
Indeed, if we assume the contrary, then we find a vertex $v$ of~$Q_A$, which will be a point of~$A$ or~$-A$, such that $v \notin W$.
As the $a_1,\ldots,a_r$ are linearly independent, there are unique coefficients $\beta_1,\ldots,\beta_r \in \R$ such that $v = \beta_1 a_1 + \ldots + \beta_r a_r$, and there must be some index~$j$ such that $\abs{\beta_j} > 1$.
As a consequence we get
\begin{align*}
\abs{\det(a_1,\ldots,a_{j-1},v,a_{j+1},\ldots,a_r)} &= \abs{\det(a_1,\ldots,a_{j-1},\beta_j a_j,a_{j+1},\ldots,a_r)} \\
&= \abs{\beta_j} \abs{\det(a_1,\ldots,a_{j-1},a_j,a_{j+1},\ldots,a_r)} \\
&> \abs{\det(a_1,\ldots,a_r)}.
\end{align*}
This means that the crosspolytope $\conv\{\pm a_1,\ldots,\pm a_{j-1},\pm v,\pm a_{j+1},\ldots,\pm a_r\} \subseteq Q_A$ has larger volume than~$C$, contradicting the maximality of the latter.

Now, since $A$ is $\Delta$-modular, we get $\vol(Q_A) \leq \vol(W) = 2^r \abs{\det(a_1,\ldots,a_r)} \leq 2^r \Delta$.
Applying the inequality~\eqref{eqn:makai-conj-symmetric} for the symmetric lattice polytope~$Q_A$, yields
\[
\omega_L(Q_A)^r \leq \left(\frac{4}{\pi}\right)^r r! \vol(Q_A) \leq \left(\frac{4}{\pi}\right)^r 2^r r! \Delta,
\]
and thus
\[
\omega_L(P_A) \leq \omega_L(Q_A) \leq \frac{8}{\pi} (r!)^{\frac{1}{r}} \Delta^{\frac{1}{r}} \leq 3 r \Delta^{\frac{1}{r}}.
\]
This finishes the proof of (ii).

To see why the previously derived upper bounds on $\wv(\Delta,r)$ and $\w(\Delta,r)$ are asymptotically best possible, we employ a scaling argument.
We claim that for every $\ell \in \Z_{>0}$,
\begin{align}
\ell \cdot \wv(\Delta,r) \leq \wv(\ell^r \Delta,r) \qquad \textrm{ and } \qquad \ell \cdot \w(\Delta,r) \leq \w(\ell^r \Delta,r).\label{eqn:scaling-wv-and-w}
\end{align}
We give the argument for $\wv(\Delta,r)$, since the one for $\w(\Delta,r)$ is analogous.
Let $S \subseteq \Z^r$ be a set in general position with simplex-volume at most~$\Delta$ and such that $\omega_L(P_S) = \wv(\Delta,r)$.
Now, $\ell S = \left\{ \ell s : s \in S \right\} \subseteq \Z^r$ is also in general position and has simplex-volume at most~$\ell^r \Delta$.
Moreover, $P_{\ell S} = \conv(\ell S) = \ell \conv(S) = \ell P_S$, and thus
\[
\wv(\ell^r \Delta,r) \geq \omega_L(P_{\ell S}) = \omega_L(\ell P_S) = \ell \omega_L(P_S) = \ell \wv(\Delta,r).
\]
Now, assume to the contrary that, say $\wv(\Delta,r) \in o(\Delta^{\frac{1}{r}})$.
In view of~\eqref{eqn:scaling-wv-and-w}, this implies that $\ell \wv(\Delta,r) \in o((\ell^r \Delta)^{\frac{1}{r}}) = o(\ell) \cdot o(\Delta^{\frac{1}{r}})$, which is a contradiction for $\ell \to \infty$.
\end{proof}

\begin{remark}
The previous arguments for the upper bound on~$\w(\Delta,r)$ do not use the genericity of the $\Delta$-modular subsets $A \subseteq \Z^r$ defining this quantity.
The argument for $\w(\Delta,r) \in \Omega(\Delta^{\frac{1}{r}})$ works both for the generic case, as well as the non-generic situation.
However, it is not clear whether $\w(\Delta,r)$ is actually given as the maximal lattice-width of a polytope $P_A$, for some $\Delta$-modular set $A \subseteq \Z^r$.
\end{remark}

We proceed by an elementary estimate on the type of sums occurring in \cref{lem:slice-reduction}.

\begin{lemma}
\label{lem:riemann-sum}
For every integers $a, n, r$ with $r \geq 2$, we have
\[
\sum_{\ell=a}^{a+n} \abs{\ell}_+^{-\frac{1}{r}} \leq 1+4 n ^{\frac{r-1}{r}}.
\]
\end{lemma}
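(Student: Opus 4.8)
The plan is to exploit that the summand $\abs{\ell}_+^{-1/r}$ depends only on $\abs\ell$, equals $1$ at $\ell=0$, and is strictly decreasing in $\abs\ell$ for $\abs\ell\ge 1$. Since the index set $\{a,a+1,\ldots,a+n\}$ consists of $n+1$ consecutive integers, I would first split the sum into its contributions from the positive indices $P=\{\ell: a\le\ell\le a+n,\ \ell\ge 1\}$, the negative indices $M=\{\ell: a\le\ell\le a+n,\ \ell\le -1\}$, and the single index $\ell=0$ should it lie in the window. Writing $m^+=\abs P$ and $m^-=\abs M$, monotonicity in $\abs\ell$ lets me bound the positive part from above by shifting those indices as far left as possible: as the elements of $P$ are consecutive integers all $\ge 1$, one has $\sum_{\ell\in P}\ell^{-1/r}\le\sum_{j=1}^{m^+}j^{-1/r}$, and symmetrically $\sum_{\ell\in M}\abs\ell^{-1/r}\le\sum_{j=1}^{m^-}j^{-1/r}$.

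The second ingredient is a standard integral comparison. Because $x\mapsto x^{-1/r}$ is decreasing on $(0,\infty)$, each term satisfies $j^{-1/r}\le\int_{j-1}^{j}x^{-1/r}\dif x$, and summing telescopes the integral to give
\[
\sum_{j=1}^{k}j^{-1/r}\ \le\ \int_{0}^{k}x^{-1/r}\dif x\ =\ \frac{r}{r-1}\,k^{\frac{r-1}{r}} .
\]
Here the improper integral at $0$ converges precisely because $r\ge 2$, which is where this hypothesis enters. I would then record the crude but convenient estimate $\frac{r}{r-1}\le 2$, valid for all $r\ge 2$.

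Combining these, and writing $s=\tfrac{r-1}{r}$, the whole sum is bounded by
\[
1+\frac{r}{r-1}\bigl((m^+)^{s}+(m^-)^{s}\bigr),
\]
where the leading $1$ generously accounts for a possible $\ell=0$ term. To finish I would use that $m^++m^-\le n+1$ together with the concavity of $t\mapsto t^{s}$ (here $0\le s<1$), which gives $(m^+)^{s}+(m^-)^{s}\le 2^{1-s}(n+1)^{s}$ by the midpoint inequality, and finally $(n+1)^{s}\le 2^{s}n^{s}$ for $n\ge 1$. Substituting $1-s=\tfrac1r$ then collapses the constant to $\frac{r}{r-1}\cdot 2^{1/r}\cdot 2^{s}=\frac{r}{r-1}\cdot 2\le 4$, yielding $1+4n^{(r-1)/r}$; the boundary case $n=0$ is immediate, since the sum is then a single term bounded by $1$.

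I expect the only delicate point to be the bookkeeping of constants in the last step: the three sources of slack — the factor $\frac{r}{r-1}$, the concavity factor $2^{1-s}=2^{1/r}$, and the factor $2^{s}$ from passing from $n+1$ to $n$ — must be tracked simultaneously, and it is their exact product $\frac{r}{r-1}\cdot2^{1/r}\cdot2^{s}=\frac{r}{r-1}\cdot2$ that lands on the claimed constant $4$. Everything else is routine monotonicity together with the textbook integral estimate above.
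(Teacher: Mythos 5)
Your proof is correct and rests on the same key estimate as the paper's: the integral comparison $\sum_{j=1}^{k} j^{-1/r} \leq \int_0^k x^{-1/r}\dif x = \tfrac{r}{r-1}k^{(r-1)/r}$, with $\tfrac{r}{r-1}\le 2$ doing the heavy lifting in the constant. Where you differ is in the reduction to such a sum starting at $j=1$: the paper invokes a symmetrization step (the sliding window $[a,a+n]$ of the even, unimodal function $\ell\mapsto\abs{\ell}_+^{-1/r}$ gives the largest sum when centred at $0$), which immediately yields the bound $1+2\sum_{\ell=1}^{\ceil{n/2}}\ell^{-1/r}$ and then the factor $4 = 2\cdot\tfrac{r}{r-1}\cdot(\text{nothing lost in }\ceil{n/2}\le n)$. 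You instead split the window into its positive part, negative part, and the possible $\ell=0$ term, shift each monotone piece to start at $1$, and recombine via concavity of $t\mapsto t^{(r-1)/r}$ plus $(n+1)^s\le 2^s n^s$; the product of your three slack factors collapses to the same constant $4$. Your route is slightly longer but avoids having to justify the symmetrization claim, which the paper states without proof (it is a standard rearrangement fact, but your split-and-concavity argument makes the worst-case analysis fully explicit). Both arguments handle $n=0$ correctly and neither loses anything in the final bound.
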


\begin{proof}
As $\ell \mapsto 1/\abs{\ell}_+$ is an even function and decreases for $\abs{\ell}\to\infty$, the sum is maximal if the interval $[a, a+n]$ is symmetric around 0.
Therefore, we have 
\[
\sum_{\ell=a}^{a+n} \abs{\ell}_+^{-\frac{1}{r}}
    \leq 1 + 2 \sum_{\ell=1}^{\ceil{n/2}} \abs{\ell}_+^{-\frac{1}{r}} = 1 + 2 \sum_{\ell=1}^{\ceil{n/2}}\ell^{-\frac{1}{r}}.
\]
The sum in the term on the right hand side is a lower Riemann sum for the integral $\int_0^{\ceil{n/2}} x^{-1/r} \dif x$.
Hence, we obtain
\[
\sum_{\ell=1}^{\ceil{n/2}} \ell^{-\frac{1}{r}}
\leq \int_0^{\ceil{n/2}} x^{-1/r} \dif x
= \frac{r}{r-1} \ceil{\frac{n}{2}}^{\frac{r-1}{r}}
\leq 2 n ^{\frac{r-1}{r}},
\]
which implies the claim.
\end{proof}

We are now prepared to prove the main result of this section.

\begin{proof}[Proof of \cref{thm:sublinear-bound-generic-heller}]
We just combine \cref{lem:slice-reduction} with \cref{lem:w-delta-asymptotics,lem:riemann-sum}.
More precisely, we get for every $r \geq 3$ that
\begin{align*}
  \g(\Delta,r) 
    &\leq \sup_{a \in \Z} 
      \sum_{\ell = a}^{a+ \w(\Delta,r)} 
        \gv\left(\frac{\Delta}{\abs{\ell}_+},r-1\right) \\
    &\leq \sup_{a\in\Z} 
      \sum_{\ell = a}^{a+ \w(\Delta,r)}
        \left(\wv\left(\frac{\Delta}{\abs{\ell}_+},r-1\right) + 1\right) \cdot (r-1) \\
    &\leq (r-1)\left(\w(\Delta, r)+1 + 
      \sup_{a\in\Z}\sum_{\ell=a}^{a+\w(\Delta, r)} 6 (r-1) \left(\frac{\Delta}{\abs{\ell}_+}\right)^{\frac{1}{r-1}}\right) \\
    &\leq (r-1)\left(\w(\Delta, r)+1 + 6(r-1)\Delta^{\frac{1}{r-1}}
      \sup_{a\in\Z}\sum_{\ell=a}^{a+\w(\Delta, r)} \left(\frac{1}{\abs{\ell}_+}\right)^{\frac{1}{r-1}}\right) \\
    &\leq (r-1)\left(\w(\Delta, r)+1 + 6(r-1)\Delta^{\frac{1}{r-1}}
      \left(1+4 \w(\Delta, r)^{\frac{r-2}{r-1}}\right)\right) \\
    &\leq (r-1)\left(3r\Delta^{\frac{1}{r}}+1 + 6(r-1)\Delta^{\frac{1}{r-1}}
    \left(1+4 \left(3r\Delta^{\frac{1}{r}}\right)^{\frac{r-2}{r-1}}\right)\right) \\
    &\leq r \cdot 10r \Delta^{\frac{1}{r-1}} \cdot 13 r \Delta^{\frac{r-2}{r(r-1)}}\\
    &= 130\, r^3 \Delta^{\frac{1}{r-1}+\frac{r-2}{r(r-1)}} 
    = 130\, r^3 \Delta^{\frac{2}{r}}.\qedhere
\end{align*}
\end{proof}

\noindent Note that we didn't optimize the constant in the bound in the above proof, but rather aimed for a simple derivation.

\section{Generic \texorpdfstring{$\Delta$}{\textDelta}-modular matrices with many columns}
\label{sect:lower-bounds}

In this section, we discuss to which extent the upper bounds in \cref{thm:linear-upper-bound} and \cref{thm:sublinear-bound-generic-heller} are best possible.
We find that for $r=2$ rows, there are infinitely many values of~$\Delta$ for which the bound in \cref{thm:linear-upper-bound} is tight, while for $r \geq 3$, our best lower bound construction on $\g(\Delta,r)$ is of order $\Omega(\Delta^{\frac{1}{r-1}})$ in contrast to the bound $\cO(\Delta^{\frac{2}{r}})$ in \cref{thm:sublinear-bound-generic-heller}.

\subsection{Constructions for two rows}
\label{sect:constr-two-rows}

By definition, we have $\s(\Delta,2) = \g(\Delta,2)$, so that in view of~\eqref{eqn:s-facet-number} the considerations in this part concern the maximum number of irredundant inequalities in a $\Delta$-modular integer program with two variables.
We keep the focus on $\g(\Delta,2)$ though, and use this notation for the same number throughout.

If $r=2$, then the upper bound in \cref{thm:linear-upper-bound} reads $\g(\Delta,2) \leq p+1$, where $p$ is the smallest prime larger than~$\Delta$.
In the following, we describe three infinite families for which this upper bound is attained:

\begin{enumerate}[label=(F\arabic*)]
  \item\label{itm:F1} $\g(\Delta,2) = \Delta + 2$, if $\Delta$ is even and $\Delta+1$ is prime
  \item\label{itm:F2} $\g(\Delta,2) = \Delta + 3$, if $\Delta \geq 3$ is odd and $\Delta + 2$ is prime
  \item\label{itm:F3} $\g(\Delta,2) = \Delta + 4$, if $\Delta \geq 4$ is even, $\Delta = 2 \bmod 3$, and $\Delta + 3$ is prime
\end{enumerate}
Note that there are values $\Delta$ for which $\g(\Delta, 2)$ does not meet the upper bound $p+1$.
Indeed, the smallest~$\Delta \geq 2$ that does not belong to any of the families \ref{itm:F1}--\ref{itm:F3} is $\Delta = 7$, and our computational experiments, that we describe in \cref{sect:numerical-results} below, show that $\g(7,2) = 10$, while $p+1 = 12$ in this case (see \cref{tab:gDelta2Values}).

Interestingly, our computational experiments in \cref{sect:numerical-results} also show that $\g(\Delta,2) \leq \Delta + 6$, for every $\Delta \leq 450$, so that the following problem may have a positive answer:

\begin{conjecture}
\label{conj:constant-excess}
Is there a constant $c>0$, such that $\g(\Delta,2) \leq \Delta + c$, for all $\Delta > 0$?
\end{conjecture}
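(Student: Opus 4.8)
The plan is to recast the conjecture as a purely planar extremal problem and attack it through a self-similar reduction. Since parallel columns are excluded, a generic $\Delta$-modular matrix $A\in\Z^{2\times n}$ is the same as a set of $n$ pairwise non-parallel vectors $u_1,\dots,u_n\in\Z^2$ with $1\le\abs{\det(u_i,u_j)}\le\Delta$ for all $i\ne j$. One may assume these vectors are primitive: if $u_i=k_iu_i'$ with $u_i'$ primitive and $k_i\ge 2$, then $u_i'$ is parallel to no other $u_l$, so replacing $u_i$ by $u_i'$ keeps all vectors pairwise non-parallel while only shrinking the (still nonzero, hence still $\ge 1$) determinants; this preserves both admissibility and the count. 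Ordering the primitive vectors by angle in $[0,\pi)$ makes every ordered determinant positive, and the goal becomes to bound by $\Delta+c$ the number of distinct primitive directions all of whose pairwise determinants lie in $\{1,\dots,\Delta\}$.

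The main device is an $\SL_2(\Z)$-normalization combined with a change of coordinates. Let $u_1,u_n$ be the angularly extreme vectors and put $D:=\det(u_1,u_n)\le\Delta$. After a unimodular transformation we may assume $u_1=(1,0)^\intercal$ and $u_n=(x,D)^\intercal$ with $0\le x<D$ and $\gcd(x,D)=1$. Assigning to each $u_k$ the pair $(a_k,b_k):=(\det(u_k,u_n),\det(u_1,u_k))\in\{1,\dots,\Delta\}^2$, a direct computation gives $\det(u_j,u_k)=\tfrac1D(a_jb_k-a_kb_j)$, so the all-pairs condition becomes $\abs{a_jb_k-a_kb_j}\le\Delta D$ subject to the congruence $a_k\equiv -x\,b_k\pmod D$ forced by integrality of $u_k$. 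This is exactly the original problem rescaled, which exhibits a Stern--Brocot / continued-fraction self-similarity: peeling off the two extreme vectors and rescaling by $D$ reduces an instance to a smaller one. I would set up an induction on $\Delta$ (equivalently, on the depth of this recursion) in which each peeling step contributes only a bounded additive term, using the congruence to show that, in a sub-cone with small $D$, the admissible pairs form an arithmetic progression up to the determinant bound. The extremal chains should then behave like the fan $(1,0)^\intercal,(1,1)^\intercal,\dots,(1,\Delta)^\intercal,(0,1)^\intercal$, which already realizes $\g(\Delta,2)\ge\Delta+2$.

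The hard part is controlling the accumulation of the additive surplus across the recursion, and this is precisely where the existing techniques break down. The finite-field bound $\g(\Delta,2)\le p+1$ of \cref{thm:linear-upper-bound} cannot yield the conjecture, since $p-\Delta$ is governed by the prime gap after $\Delta$ and is not $\cO(1)$; any proof must therefore be ``gap-free'' and avoid passing to $\mathbb F_p$. Likewise the geometry-of-numbers slicing of \cref{lem:slice-reduction}, combined with \cref{lem:w-delta-asymptotics}, only gives $\g(\Delta,2)\in\cO(\Delta\log\Delta)$: the central slices $\ell$ already admit $\sim\Delta/\abs{\ell}$ points each, and summing independently over the $\Theta(\sqrt{\Delta})$ relevant slices loses a logarithmic factor. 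The crux, which I expect to be the genuine obstacle, is to replace this independent slice-by-slice estimate by an \emph{interaction} argument showing that whenever one slice -- equivalently, one sub-cone in the recursion -- is nearly full, all the others must be nearly empty; equivalently, one must prevent the bounded per-level surplus from summing over the $\Theta(\log\Delta)$ levels of the continued-fraction recursion. The computational evidence $\g(\Delta,2)\le\Delta+6$ for $\Delta\le 450$ strongly suggests that the honest value of the surplus is a small constant, presumably $c=4$ as attained by family~\ref{itm:F3}, so that the induction, once it can be closed, should terminate after only finitely much loss.
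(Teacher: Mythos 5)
The statement you are asked about is not a theorem of the paper: it is \cref{conj:constant-excess}, an open question. The paper offers no proof, only computational evidence ($\g(\Delta,2)\le\Delta+6$ for all $\Delta\le 450$) and the much weaker unconditional bound $\g(\Delta,2)\le\Delta+\cO(\Delta^{0.525})$, obtained by combining the upper bound $p+1$ of \cref{thm:linear-upper-bound} with the prime-gap result of Baker--Harman--Pintz. So there is no proof in the paper to compare yours against, and, to your credit, you do not claim to have one either.

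That said, your proposal has a genuine and explicitly acknowledged gap, so it cannot be accepted as a proof. The preparatory steps are sound: the reduction to pairwise non-parallel primitive vectors, the normalization $u_1=(1,0)^\intercal$, $u_n=(x,D)^\intercal$, the identity $\det(u_j,u_k)=\tfrac1D(a_jb_k-a_kb_j)$ with $(a_k,b_k)=(\det(u_k,u_n),\det(u_1,u_k))$, and the congruence $a_k\equiv -xb_k\pmod D$ all check out, and the Stern--Brocot self-similarity is a reasonable framing. But the entire content of the conjecture is concentrated in the step you defer: showing that the additive surplus per level of the recursion does not accumulate over the $\Theta(\log\Delta)$ levels, or equivalently that a nearly full sub-cone forces the others to be nearly empty. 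Without that interaction argument, summing the levels independently reproduces essentially the $\cO(\Delta\log\Delta)$ loss you correctly diagnose in the slicing approach of \cref{lem:slice-reduction}, and the induction never closes. A further small inaccuracy: the constant cannot be $c=4$, since the paper's data give $\g(24,2)=30=\Delta+6$ (and likewise for $\Delta\in\{48,120,168\}$); family~\ref{itm:F3} only shows $c\ge 4$ is necessary asymptotically, while \cref{conj:explicit-form-g-Delta-2} predicts excess at most $4$ only for $\Delta$ sufficiently large.
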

The weaker question whether $\limsup_{\Delta \to \infty} \frac{\g(\Delta,2)}{\Delta} = 1$ can be answered in the affirmative by using any of the families~\ref{itm:F1}--\ref{itm:F3}, and in view of the results in~\cite{bakerharmanpintz2001thedifference}.
Therein it is shown that for sufficiently large $x$, the interval $[x - x^{0.525},x]$ contains at least one prime, which implies $\g(\Delta,2) \leq \Delta + \cO(\Delta^{0.525})$.

We now describe matrices that attain the claimed identities for the families \ref{itm:F1}--\ref{itm:F3}.

\subsubsection*{Matrices for \ref{itm:F1} and \ref{itm:F2}}

A construction showing $\g(\Delta,2) \geq \Delta + 2$, for \emph{every} $\Delta \in \Z_{>0}$, is mentioned in~\cite{paatstallknechtwalshxu2022onthecolumn}.
Indeed, the matrix with $\Delta+2$ columns given by
\begin{align}
 \begin{pmatrix}
   1 & 0 & 1 & 1 & \cdots & 1      \\
   0 & 1 & 1 & 2 & \cdots & \Delta
 \end{pmatrix}
 \label{eqn:Delta+2Ex}
\end{align}
is generic $\Delta$-modular, and thus attains $\g(\Delta,2)$ for the values~$\Delta$ belonging to family~\ref{itm:F1}.

If $\Delta$ is odd, we can extend~\eqref{eqn:Delta+2Ex} and obtain a matrix with $\Delta+3$ columns given by 
\begin{align}
 \begin{pmatrix}
   1 & 0 & 1 & 1 & \cdots & 1      & 2 \\
   0 & 1 & 1 & 2 & \cdots & \Delta & \Delta
 \end{pmatrix},
 \label{eqn:Delta+3Ex}
\end{align}
which is again generic $\Delta$-modular.
This shows that $\g(\Delta, 2)\geq \Delta+3$, for \emph{every} odd~$\Delta$ and meets the upper bound for the family~\ref{itm:F2}.

\subsubsection*{Matrices for \ref{itm:F3}}

The construction for this family is more involved.
For $m \in \N$ and integer vectors $a,b \in \Z^m$ with $a_j \leq b_j$, for all $j = 1,\ldots,m$, we define the matrix
\[
M(a,b) :=   \left(
  \begin{array}{cccccccccccccc}
    0 &   1 & \cdots &   1 &   2 & \cdots &   2 &   3 & \cdots &   3 & \cdots &   m & \cdots &   m  \\
    1 & a_1 & \cdots & b_1 & a_2 & \cdots & b_2 & a_3 & \cdots & b_3 & \cdots & a_m & \cdots & b_m
  \end{array}
  \right),
\]
where, for $j = 1, \ldots, m$, the dots between $a_j$ and $b_j$ in the second row indicate the list of all integers $k$ such that $a_j \leq k \leq b_j$ and $\gcd(j,k) = 1$.
So, by construction the columns of $M(a,b)$ are pairwise non-parallel and thus the matrix is generic.
Observe that the matrices~\eqref{eqn:Delta+2Ex} and~\eqref{eqn:Delta+3Ex} also have this form.
Indeed, up to column permutations,~\eqref{eqn:Delta+2Ex} equals $M(0,\Delta)$, whereas~\eqref{eqn:Delta+3Ex} equals $M(\binom{0}{\Delta},\binom{\Delta}{\Delta})$.
With this notation, our task is to find vectors $a,b \in \Z^m$ such that $M(a,b)$ has many columns while the absolute value of the minors of size two remain bounded.

\begin{proposition}
\label{prop:r2-construction-family3}
Let $\Delta \geq 4$ be even and such that $\Delta = 2 \bmod 3$.
For the following vectors $a,b \in \Z^3$ the matrix $M(a,b)$ is $\Delta$-modular and has $\Delta + 4$ columns:
\begin{enumerate}[label=(\roman*)]
 \item If $\Delta = 12 s + 2$, for some $s \in \N$, let
 \[
 a = (0,4 s + 1, 9 s + 1)^\intercal \quad\textrm{ and }\quad b = (7 s + 1, 10 s + 1, 12 s + 2)^\intercal.
 \]
 \item If $\Delta = 12 s + 8$, for some $s \in \N$, let
 \[
 a = (0,4 s + 3, 9 s + 7)^\intercal \quad\textrm{ and }\quad b = (7 s + 5, 10 s + 7, 12 s + 8)^\intercal.
 \]
\end{enumerate}
\end{proposition}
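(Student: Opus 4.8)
The genericity of $M(a,b)$ is already recorded above, so for each of the two cases (i) and (ii) it remains to check two things: that $M(a,b)$ has exactly $\Delta+4$ columns, and that it is $\Delta$-modular, that is, every $2\times 2$ minor has absolute value at most $\Delta$ while some minor equals $\pm\Delta$. Throughout, a column of $M(a,b)$ is a vector $(j,k)^\intercal$ with $j\in\{0,1,2,3\}$, where $j=0$ contributes only the single column $(0,1)^\intercal$.

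For the column count I would tally, for each $j\in\{1,2,3\}$, the admissible second-row entries: for $j=1$ all integers in $[a_1,b_1]$, for $j=2$ the odd ones in $[a_2,b_2]$, and for $j=3$ those in $[a_3,b_3]$ not divisible by~$3$. Each of these is a count of integers in an interval subject to a fixed residue condition, hence immediate from the explicit endpoints. In case (i) this gives $(7s+2)+(3s+1)+(2s+2)$ columns for $j=1,2,3$ respectively, and in case (ii) it gives $(7s+6)+(3s+3)+(2s+2)$; adding the column $(0,1)^\intercal$ yields $12s+6=\Delta+4$ and $12s+12=\Delta+4$, respectively. The divisibility filters are chosen precisely so that these three counts sum to the target.

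The substantive part is the minor bound. The $2\times 2$ minor of two columns $(j,k)^\intercal$ and $(j',k')^\intercal$ equals $jk'-j'k$, which for fixed $(j,j')$ is separately monotone in $k$ and in $k'$ on the box $[a_j,b_j]\times[a_{j'},b_{j'}]$; hence its extreme values occur at the four corners, and it suffices to evaluate $|jk'-j'k|$ at the interval endpoints for each pair $(j,j')\in\{0,1,2,3\}^2$. Since this only enlarges the feasible set, I may ignore the residue filters in this step, as they can only remove candidate columns and thus never increase a minor. The pairs involving $j=0$ give minors of absolute value at most~$3$, and the diagonal pairs $j=j'$ give $j\,|k'-k|$, controlled by the interval length; the binding pairs are $(1,3)$, $(2,3)$, and (in case (ii)) $(2,2)$, whose corner evaluations come out as $\Delta$, $\Delta-1$, and $\Delta$ respectively, so none exceeds $\Delta$. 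Finally, since $b_3=\Delta$ with $\gcd(3,\Delta)=1$, both columns $(1,0)^\intercal$ and $(3,\Delta)^\intercal$ are present, and their minor $1\cdot\Delta-3\cdot 0=\Delta$ certifies $\Delta$-modularity.

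I expect no conceptual difficulty: the argument is entirely bookkeeping, and the real work is tracking the endpoints modulo~$2$ and~$3$ across both congruence cases so that the column count hits $\Delta+4$ exactly and the near-extremal minors stay within~$\Delta$. The delicate checks are the pair $(2,3)$, whose extreme minor equals $\Delta-1$ in both cases, and $(2,2)$, which reaches $\Delta$ in case (ii); these leave essentially no slack and are exactly what force the particular affine formulas for $a$ and $b$. One also uses the hypothesis $\Delta\geq 4$, equivalently $s\geq 1$ in case (i), to dispose of the remaining pair $(3,3)$, whose corner minor is $9s+3$.
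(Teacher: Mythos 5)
Your proposal is correct and follows essentially the same route as the paper: count the columns for each first-row value $j\in\{0,1,2,3\}$ using the gcd filter, then bound every $2\times 2$ minor by evaluating $jk'-j'k$ at the interval endpoints for each pair $(j,j')$ (the paper writes out these corner evaluations explicitly rather than invoking monotonicity, but the computation is identical). Your added remark that the columns $(1,0)^\intercal$ and $(3,\Delta)^\intercal$ give a minor equal to $\Delta$ is a nice explicit certificate of $\Delta$-modularity that the paper leaves implicit.
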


\begin{proof}
(i): We first count the columns of
\[
M(a,b) =
\begin{pmatrix}
0 & 1 & \cdots &    1 &    2 & \cdots &     2 &    3 & \cdots &     3 \\
1 & 0 & \cdots & 7s+1 & 4s+1 & \cdots & 10s+1 & 9s+1 & \cdots & 12s+2 
\end{pmatrix}.
\]
There are $7s+2$ columns of the form $(1,k)^\intercal$, there are $3s+1$ columns of the form $(2,k)^\intercal$ corresponding to the odd numbers between $4s+1$ and $10s+1$, and there are $2s+2$ columns of the form $(3,k)^\intercal$ corresponding to the numbers between $9s+1$ and $12s+2$ that are not divisible by~$3$.
Together with the column $(0,1)^\intercal$ this gives in total $12s + 6 = \Delta + 4$ columns.

We now prove that all the minors of size two in $M(a,b)$ are bounded by~$\Delta$ in absolute value.
Clearly all determinants involving $(0,1)^\intercal$ satisfy the constraint. For all other determinants we may check the upper and lower bound separately:
\begin{align*}
  \abs{\det\begin{pmatrix}
    1   & 1     \\
    k_1 & k_1'
  \end{pmatrix}}
  &= \abs{k_1'-k_1} \leq 7s+1 \leq \Delta  \\
  \abs{\det\begin{pmatrix}
    2   & 2     \\
    k_2 & k_2'
  \end{pmatrix}}
  &= \abs{2(k_2'-k_2)} \leq 2\,(10s+1-(4s+1)) = 12s \leq \Delta \\
  \abs{\det\begin{pmatrix}
    3   & 3     \\
    k_3 & k_3'
  \end{pmatrix}}
  &= \abs{3(k_3'-k_3)} \leq 3\,(12s+2-(9s+1)) = 9s+3 \leq \Delta \\
  \det\begin{pmatrix}
    1   & 2     \\
    k_1 & k_2
  \end{pmatrix}
  &=k_2-2k_1 \leq 10s+1-2 \cdot 0 = 10s+1 \leq \Delta \\
  \det\begin{pmatrix}
    1   & 2     \\
    k_1 & k_2
  \end{pmatrix}
  &=k_2-2k_1 \geq 4s+1-2\,(7s+1) = -10s-1 \geq -\Delta \\
  \det\begin{pmatrix}
    1   & 3     \\
    k_1 & k_3
  \end{pmatrix}
  &=k_3-3k_1 \leq 12s+2-3 \cdot 0 = 12s+2 \leq \Delta \\
  \det\begin{pmatrix}
    1   & 3     \\
    k_1 & k_3
  \end{pmatrix}
  &=k_3-3k_1 \geq 9s+1-3\,(7s+1) = -12s-2 \geq -\Delta \\
  \det\begin{pmatrix}
    2   & 3     \\
    k_2 & k_3
  \end{pmatrix}
  &=2k_3-3k_2 \leq 2\,(12s+2)-3\,(4s+1) = 12s+1 \leq \Delta \\
  \det\begin{pmatrix}
    2   & 3     \\
    k_2 & k_3
  \end{pmatrix}
  &=2k_3-3k_2 \geq 2\,(9s+1)-3\,(10s+1) = -12s-1 \geq -\Delta. \\
\end{align*}
(ii): Here, $\Delta = 12 s + 8$, for some non-negative integer $s$, and we consider the matrix 
\[
M(a,b) =
\begin{pmatrix}
0 & 1 & \cdots &    1 &    2 & \cdots &     2 &    3 & \cdots &     3 \\
1 & 0 & \cdots & 7s+5 & 4s+3 & \cdots & 10s+7 & 9s+7 & \cdots & 12s+8 
\end{pmatrix}.
\]
Similarly to part~(i), we find that there are $7s+6$ columns of the form $(1,k)^\intercal$, there are $3s+3$ columns of the form $(2,k)^\intercal$, and there are $2s+2$ columns of the form $(3,k)^\intercal$.
Together with the column $(0,1)^\intercal$ this gives a total of $12s + 12 = \Delta + 4$ columns.

Showing that $M(a,b)$ is $\Delta$-modular is analogous to part~(i).
\end{proof}

\noindent Since the conditions on~$\Delta$ in \cref{prop:r2-construction-family3} imply that either $\Delta = 2 \bmod 12$ or $\Delta = 8 \bmod 12$, the matrices therein explain family~\ref{itm:F3}.

\subsection{A Vandermonde-type construction for an arbitrary number of rows}
\label{sect:vandermonde-construction}

The following bound subsumes the linear lower bound $\g(\Delta,2) \geq \Delta$, that is provided by the last~$\Delta$ columns of the matrix~\eqref{eqn:Delta+2Ex}, to a bound of order $\g(\Delta,r) \in \Omega(\Delta^{\frac{1}{r-1}})$, for every dimension~$r \geq 2$.

\begin{proposition}
\label{prop:generic-gen-heller-lower-bound}
For every $r \geq 2$ and any prime number $p$ with $p \geq r$, we have
\[
\g\left(\lceil (r-1)^{\frac{r-1}{2}} \rceil \cdot (p-1)^{r-1} , r \right) \geq p.
\]
\end{proposition}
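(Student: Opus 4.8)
The plan is to realize the lower bound by a Reed--Solomon/Vandermonde construction over $\mathbb{F}_p$, lifted to the integers so that all entries stay small. Concretely, I would take the $r\times p$ matrix $A$ whose columns are indexed by $i\in\{0,1,\ldots,p-1\}$ and whose $i$-th column is
\[
a^{(i)} = \bigl(1,\ \langle i\rangle,\ \langle i^2\rangle,\ \ldots,\ \langle i^{r-1}\rangle\bigr)^\intercal,
\]
where $\langle m\rangle$ denotes the integer representative of $m \bmod p$ of absolute value at most $(p-1)/2$ (the case $p=2$ being trivial, since then $\Delta=(p-1)^{r-1}=1$). Thus the first row of $A$ is the all-ones vector and every entry of the remaining $r-1$ rows lies in $[-(p-1)/2,(p-1)/2]$. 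Two things have to be checked: that $A$ is generic of rank $r$, and that each of its $r\times r$ minors has absolute value at most $\Delta=\ceil{(r-1)^{(r-1)/2}}\,(p-1)^{r-1}$.

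For genericity I would reduce modulo $p$: entrywise $A\equiv V\pmod p$, where $V$ is the Vandermonde matrix with nodes $0,1,\ldots,p-1$. For any $r$ columns $i_1<\cdots<i_r$ the corresponding minor of $V$ equals $\prod_{s<t}(i_t-i_s)$, a product of differences of distinct elements of $\{0,\ldots,p-1\}$, hence nonzero in $\mathbb{F}_p$. Since the integer minor of $A$ on these columns is congruent to this product modulo $p$, it is nonzero over $\Z$. Therefore every set of $r$ columns of $A$ is linearly independent; as $p\geq r$ this gives $\rank(A)=r$ and shows that $A$ is generic, its columns in particular being pairwise distinct.

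For the determinant bound I would apply Hadamard's inequality to the rows of an arbitrary $r\times r$ submatrix. The all-ones row has Euclidean norm $\sqrt r$, while each of the other $r-1$ rows has norm at most $\sqrt{r}\,(p-1)/2$, its entries being bounded by $(p-1)/2$ in absolute value. Hence every $r\times r$ minor satisfies
\[
\abs{\det} \ \leq\ \sqrt r\cdot\Bigl(\sqrt r\,\tfrac{p-1}{2}\Bigr)^{r-1} \ =\ \frac{r^{r/2}}{2^{\,r-1}}\,(p-1)^{r-1}.
\]
It then remains to verify the elementary inequality $\frac{r^{r/2}}{2^{r-1}}\leq (r-1)^{(r-1)/2}$, equivalently $r^{r}\leq 4^{\,r-1}(r-1)^{r-1}$, which holds with equality at $r=2$ and, using $r^r/(r-1)^{r-1}=r\,(1+\tfrac1{r-1})^{r-1}<e\,r$, comfortably for all $r\geq 3$. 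This bounds every minor by $(r-1)^{(r-1)/2}(p-1)^{r-1}\leq\Delta$.

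The decisive point — which I would flag as the crux — is that the correct exponent on $(p-1)$ is $r-1$ rather than the $\binom r2$ that the Vandermonde determinant formula would naively suggest over $\Z$. This is gained from two features: passing to balanced residues keeps every entry below $(p-1)/2$, supplying the factor $2^{-(r-1)}$, and the constant first row contributes only $\sqrt r$ to Hadamard's product, lowering the exponent from $r$ to $r-1$. Finally, the construction yields a generic matrix all of whose $r\times r$ minors have absolute value at most $\Delta$; to conclude $\g(\Delta,r)\geq p$ one uses that a generic matrix whose maximal minors are bounded by $\Delta$ already witnesses the value $\Delta$ — equivalently, that $\g(\cdot,r)$ is non-decreasing in its first argument, so that the possibly smaller actual maximum $\Delta'\leq\Delta$ still gives $\g(\Delta,r)\geq\g(\Delta',r)\geq p$. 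I expect this last bookkeeping step (exact attainment versus mere boundedness of the minors) to be the only subtlety beyond the two constructive ideas above.
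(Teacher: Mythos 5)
Your construction is essentially the paper's: the paper lifts the modular moment curve $t\mapsto([t]_p,\ldots,[t^{r-1}]_p)$ (representatives in $\{1,\ldots,p\}$) to columns $(1,\nu(t))$, gets genericity from the general position of these points over $\mathbb{F}_p$ (equivalently, your nonvanishing Vandermonde minor mod $p$), and bounds the minors by Hadamard after subtracting the first column, which directly yields $\left(\sqrt{r-1}\,(p-1)\right)^{r-1}$. Your variant with balanced residues and row-wise Hadamard needs the extra inequality $r^{r/2}/2^{r-1}\leq(r-1)^{(r-1)/2}$, which you verify correctly, so the two routes are interchangeable.

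One caution on your final ``bookkeeping'' step: monotonicity of $\Delta\mapsto\g(\Delta,r)$ is \emph{not} available as a patch --- it is an open conjecture for $r=2$ in this paper, and the computed values for $r=3$ (e.g.\ $\g(9,3)=12>10=\g(10,3)$) show it actually fails for larger $r$. So ``minors bounded by $\Delta$'' does not formally reduce to ``$\Delta$-modular'' this way. The paper's own proof elides the same point, simply declaring the constructed matrix $\Delta$-modular for the stated $\Delta$, so your proposal is not worse off; but if one wanted to be fully rigorous, the proposition should be read as a statement about the actual maximal minor $\Delta'\leq\lceil(r-1)^{(r-1)/2}\rceil(p-1)^{r-1}$ of the constructed matrix, which is all that is needed for the asymptotic conclusion $\g(\Delta,r)\in\Omega(\Delta^{1/(r-1)})$.
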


\begin{proof}
For an integer $z \in \Z$, we let $[z]_p = z \bmod p$ be the representative in~$\{1,2,\ldots,p\}$.
With this notion, we consider the modular moment curve $\nu : \Z \to \{1,2,\ldots,p\}^{r-1}$ defined by $\nu(t) = ([t]_p, [t^2]_p , \ldots , [t^{r-1}]_p)$.
The~$p$ points $\nu(1),\nu(2),\ldots,\nu(p) \in \R^{r-1}$ are in general position, meaning that no~$r$ of them are contained in the same hyperplane in~$\R^{r-1}$ (cf.~\cite[\S 10.1]{brassmoserpach2005researchproblems}).
Lifting these points to~$r$ dimensions and arranging the~$p$ vectors $(1,\nu(1)),\ldots,(1,\nu(p))$ as columns of a matrix that we denote by~$A_{p,r} \in \Z^{r \times p}$, we find that~$A_{p,r}$ is generic with $\rank(A_{p,r}) = r$.
Further, by the Hadamard inequality for the determinant we have
\begin{align*}
\left|\det\left(
\begin{matrix}
1 & \ldots & 1 \\
\nu(\ell_1) & \ldots & \nu(\ell_r)
\end{matrix}
\right)\right|
&= \left|\det\left(
\begin{matrix}
1 & \ldots & 1 \\
[\ell_1]_p & \ldots & [\ell_r]_p \\
\vdots & \ddots & \vdots \\
[\ell_1^{r-1}]_p & \ldots & [\ell_r^{r-1}]_p
\end{matrix}
\right)\right| \\
&= \left|\det\left(
\begin{matrix}
1 & 1 & \ldots & 1 \\
0 & [\ell_2]_p - [\ell_1]_p & \ldots & [\ell_r]_p - [\ell_1]_p \\
\vdots & \vdots & \ddots & \vdots \\
0 & [\ell_2^{r-1}]_p - [\ell_1^{r-1}]_p & \ldots & [\ell_r^{r-1}]_p - [\ell_1^{r-1}]_p
\end{matrix}
\right)\right| \\
&\leq \prod_{i=2}^r \left\| \left( [\ell_i]_p - [\ell_1]_p,\ldots,[\ell_i^{r-1}]_p - [\ell_1^{r-1}]_p \right) \right\| \\
&\leq \left( \sqrt{r-1} \cdot (p-1)\right)^{r-1} ,
\end{align*}
for every $1 \leq \ell_1 < \ldots < \ell_r \leq p$.
In other words, the matrix~$A_{p,r}$ is $\Delta$-modular, where $\Delta = \lceil (r-1)^{\frac{r-1}{2}} \rceil \cdot (p-1)^{r-1}$, and the claimed bound follows.
\end{proof}
  

Given the upper bound on $\g(\Delta,r)$ of order $\cO(\Delta^{\frac{2}{r}})$ in \cref{thm:sublinear-bound-generic-heller}, and the lower bound of order $\Omega(\Delta^{\frac{1}{r-1}})$ above, the question about the precise asymptotic behavior of $\g(\Delta,r)$, for fixed~$r$, remains.
We tend to believe that the lower bound is the correct order of growth:

\begin{question}
For fixed $r \geq 3$, is it true that $\g(\Delta,r) \in \Theta(\Delta^{\frac{1}{r-1}})$?
\end{question}

\section{A computational approach for small parameters}
\label{sect:computational-approach}

Complementing the theoretical results on $\g(\Delta,r)$ concerning upper bounds in \cref{sect:upper-bounds} and lower bounds in \cref{sect:lower-bounds}, we discuss in the following a computational approach to algorithmically determine the precise values of $\g(\Delta,r)$ for small parameters~$\Delta$ and~$r$.
A related yet different computational approach for the ``non-generic'' numbers $\h(\Delta,r)$ has been described in~\cite{averkovschymura2023onthemaximal} (see the discussion in \cref{sect:non-generic-computations}).

We begin the section with some elementary properties of generic $\Delta$-modular matrices that take the symmetries of the problem into account, in particular its invariance under unimodular transformations.
Based on these properties we describe our algorithm in \cref{sect:algorithm-description} and investigate simple reductions that speed up the implementation considerably.
Finally, in \cref{sect:numerical-results}, we report on the numerical results of our computations and we aim to explain the findings at least for the rank two case as best as possible.

\subsection{Some properties of generic \texorpdfstring{$\Delta$}{\textDelta}-modular matrices}

We start with a linear algebra lemma about integer matrices.

\begin{lemma}
\label{lem:det-times-rational}
Let $A \in \Z^{r \times r}$ be invertible over~$\Q$ and let $S \in \Q^{r \times m}$.
If $AS$ is an integer matrix, then $\det(A) S$ is an integer matrix.
\end{lemma}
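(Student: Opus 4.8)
The plan is to exploit the classical adjugate (cofactor) formula for the inverse, which keeps everything over the integers and sidesteps dividing by $\det(A)$ altogether. Recall that the adjugate matrix $\adj(A) \in \Z^{r \times r}$ has as its entries the signed $(r-1) \times (r-1)$ minors of~$A$; since~$A$ is an integer matrix, each such minor is an integer, so $\adj(A)$ is itself an integer matrix. Moreover it satisfies the fundamental identity $\adj(A)\, A = \det(A)\, I_r$, which holds for any square matrix over a commutative ring and in particular over~$\Z$.

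First I would set $B := AS$, which is an integer matrix by hypothesis. Multiplying this equality on the left by $\adj(A)$ and invoking the adjugate identity yields
\[
\adj(A)\, B = \adj(A)\, A\, S = \det(A)\, I_r\, S = \det(A)\, S.
\]
Hence $\det(A)\, S = \adj(A)\, B$ is a product of the integer matrices $\adj(A) \in \Z^{r \times r}$ and $B \in \Z^{r \times m}$, and is therefore an integer matrix, which is exactly what the lemma asserts.

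I expect no genuine obstacle here: the only point deserving a word of justification is that $\adj(A)$ has integer entries, which is immediate from its definition via cofactors. The invertibility of~$A$ over~$\Q$ (equivalently $\det(A) \neq 0$) is needed only to make the statement non-vacuous and to recover $S = A^{-1} B$; the integrality argument itself never divides by $\det(A)$. As a purely optional alternative, one could argue entrywise via Cramer's rule, writing each entry of~$S$ as a ratio whose numerator is an integer determinant and whose denominator is $\det(A)$, but the adjugate computation packages this into a single line and is cleaner.
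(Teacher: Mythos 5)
Your proposal is correct and follows exactly the same route as the paper: both define the adjugate $\adj(A)$, note it is an integer matrix, and multiply $AS$ on the left by $\adj(A)$ to obtain $\det(A)S = \adj(A)(AS)$ as a product of integer matrices. No differences worth noting.
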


\begin{proof}
Let $\adj(A)$ be the adjugate matrix of $A$, which is an integer matrix because~$A$ is.
Then $\adj(A)A = \det(A) I$, and we write $C=AS$.
By assumption, $C$ is an integer matrix.
Multiplying both sides from the left by $\adj(A)$ we get $\adj(A) C= \adj (A) A S = \det(A) S$, which is an integer matrix because $\adj(A)$ and~$C$ are.
\end{proof}

\goodbreak
\begin{definition}
Let $A \in \Q^{r\times n}$ be a rational matrix with $\rank(A) = r$.
We call~$A$
\begin{enumerate}[label=(\roman*)]
 \item \emph{$\Delta$-bound}, if for $1\leq m\leq r$ every $m\times m$-minor is bounded by $\Delta^m$ in absolute value,
 \item \emph{totally generic}, if for $1\leq m\leq r$ every $m\times m$-minor is non-zero.
\end{enumerate}
\end{definition}
\noindent If $A$ is a totally generic $\Delta$-bound matrix, then in particular all entries of~$A$ are non-zero and bounded by~$\Delta$ in absolute value.
An equivalent condition to being $\Delta$-bound is that $A/\Delta$ is totally $1$-submodular.
As a sidenote, by Hadamard's inequality we have that if the absolute value of the entries of any matrix are bounded by~$\beta$, then each $m\times m$-minor of that matrix is bounded by $m^{m/2} \beta^m$ in absolute value.
So if $A$ is $\Delta$-bound this then gets rid of the factor~$m^{m/2}$.

A direct observation is that total $\Delta$-submodularity and total genericity reduce to $\Delta$-submodularity and genericity, respectively, if we amend a matrix with the identity matrix.

\begin{proposition}
\label{prop:totally-vs-simple-modularity-and-genericity}
Let $S \in \Q^{r \times n}$ have rank~$r$ and let $I \in \Z^{r \times r}$ be the identity matrix.
Then, for every $\Delta \in \Z_{>0}$, we have that
\[
S \textrm{ is totally } \Delta\textrm{-submodular} \quad \Longleftrightarrow \quad (I,S) \textrm{ is } \Delta\textrm{-submodular},
\]
and
\[
S \textrm{ is totally generic } \quad \Longleftrightarrow \quad (I,S) \textrm{ is generic}.
\]
\end{proposition}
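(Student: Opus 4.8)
The plan is to establish both equivalences by relating the minors of the augmented matrix $(I,S)$ to the minors of $S$ itself. The key structural observation is that any $r \times r$ submatrix of $(I,S)$ is obtained by selecting some columns from the identity block~$I$ and the remaining columns from~$S$. If we pick the columns $e_{i_1},\ldots,e_{i_k}$ of~$I$ indexed by a set $J = \{i_1,\ldots,i_k\} \subseteq [r]$, together with $r-k$ columns of~$S$, then Laplace expansion along the unit-vector columns shows that the determinant of this $r \times r$ submatrix equals, up to sign, the determinant of the $(r-k) \times (r-k)$ submatrix of~$S$ obtained by deleting the rows indexed by~$J$ and keeping exactly the chosen columns of~$S$. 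In other words, every full-size minor of $(I,S)$ is, up to sign, equal to some minor of~$S$ of size between~$0$ and~$r$, and conversely every minor of~$S$ of size~$m$ arises in this way by complementing with $r-m$ suitable unit vectors.

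With this correspondence in hand, both equivalences follow by matching the relevant quantities. For the $\Delta$-submodularity statement, I would argue that $(I,S)$ is $\Delta$-submodular, meaning every $r \times r$ minor of $(I,S)$ has absolute value at most~$\Delta$, precisely when every minor of~$S$ of every size $m \in \{1,\ldots,r\}$ has absolute value at most~$\Delta$, which is exactly the definition of~$S$ being totally $\Delta$-submodular. (The empty minor of size~$0$, which equals~$1$, corresponds to selecting all~$r$ columns of the identity and poses no constraint.) The genericity statement proceeds identically: $(I,S)$ is generic exactly when every $r \times r$ minor of $(I,S)$ is non-zero, which via the correspondence holds if and only if every minor of~$S$ of every size is non-zero, i.e.\ $S$ is totally generic.

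The main technical point to carry out carefully is the Laplace-expansion bookkeeping that underlies the correspondence: one must verify that selecting unit-vector columns $e_{i_1},\ldots,e_{i_k}$ forces a cofactor expansion in which those rows contribute a single nonzero entry each, so that the resulting minor of $(I,S)$ collapses (up to sign) to the complementary minor of~$S$. I expect this to be the only genuine obstacle, and it is a standard fact about block-structured determinants; the sign that appears is irrelevant since both equivalences concern only the absolute value or the vanishing of the minors. Once the correspondence is stated cleanly, the two biconditionals are immediate, and I would present the argument once and note that it applies verbatim to both cases.
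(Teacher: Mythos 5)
Your proposal is correct and matches the paper's (implicit) reasoning: the paper presents this proposition as a ``direct observation'' without a written proof, and the intended justification is exactly the Laplace-expansion correspondence you describe, under which each $r \times r$ minor of $(I,S)$ using $k$ unit-vector columns indexed by $J$ equals, up to sign, the $(r-k)\times(r-k)$ minor of $S$ on the rows $[r]\setminus J$ and the chosen columns, with every minor of $S$ of size $1 \le m \le r$ arising this way. Your handling of the size-$0$ case and the irrelevance of signs is also right, so the argument is complete as sketched.
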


\noindent We now develop the key lemma that is the basis of our algorithm to compute $\g(\Delta,r)$, for any given parameters~$\Delta,r$, in the next section. 
Note that our arguments are similar to those establishing the key lemma in~\cite[Lem.~1]{averkovschymura2023onthemaximal}.

It relies on the notion of a normal form for integer matrices under unimodular transformations.
We only need the concept for square matrices here.
For such an $A \in \Z^{r \times r}$, there exists a uniquely defined unimodular matrix $U \in \Z^{r \times r}$ such that $A = UH$ and $H \in \Z^{r \times r}$ is an upper triangular matrix with positive diagonal entries $h_{11},\ldots,h_{rr}$ and off-diagonal entries $0 \leq h_{ij} < h_{jj}$, for all $1 \leq j \leq r$ and $1 \leq i < j$.
The matrix~$H$ is called the \emph{Hermite normal form} of~$A$ (see~\cite[Ch.~2]{cohen1993acourse} for more background and computational approaches to efficiently compute~$H$ from a given matrix~$A$).

\begin{lemma}
\label{lem:existence-Deltabound-totally-gen}
Let $D\in\Z^{r\times(r+L)}$ be a generic $\Delta$-modular matrix with $\rank(D)=r$.
Then there exists a totally generic $\Delta$-bound matrix $C\in\Z^{r\times L}$.
Further, each column of $C$ is a solution to a linear system of equations $Ax=0 \bmod \Delta$ for an integer matrix $A\in \Z^{r\times r}$ in Hermite normal form satisfying $\det(A)=\Delta$.
\end{lemma}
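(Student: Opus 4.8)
The plan is to use the $\Delta$-modularity of $D$ to isolate an $r\times r$ submatrix of extremal determinant, and then to obtain $C$ from the remaining columns after normalizing by that submatrix, transferring genericity and modularity through \cref{prop:totally-vs-simple-modularity-and-genericity}. First I would invoke that $D$ is $\Delta$-modular — not merely $\Delta$-submodular — to select $r$ columns whose $r\times r$ submatrix $A_0$ satisfies $\abs{\det A_0} = \Delta$. After a column permutation, write $D = (A_0 \mid B)$ with $B \in \Z^{r\times L}$, set $S := A_0^{-1} B \in \Q^{r\times L}$, and observe that $A_0^{-1} D = (I \mid S)$. The decisive point is that every $r\times r$ minor of $(I \mid S)$ arises from the corresponding minor of a column permutation of $D$ by dividing through $\det A_0 = \pm\Delta$; since $D$ is generic $\Delta$-modular, each such minor of $D$ has absolute value in $(0,\Delta]$, so each minor of $(I \mid S)$ lands in $(0,1]$. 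Thus $(I \mid S)$ is generic and $1$-submodular, and \cref{prop:totally-vs-simple-modularity-and-genericity} yields that $S$ is totally generic and totally $1$-submodular.

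Next I would clear denominators. As $A_0 S = B$ is integral and $A_0$ is invertible over $\Q$, \cref{lem:det-times-rational} gives that $\det(A_0)\, S = \pm\Delta\, S$ is integral, whence $C := \Delta S \in \Z^{r\times L}$. Each $m\times m$ minor of $C$ equals $\Delta^m$ times the corresponding minor of $S$: total $1$-submodularity of $S$ bounds it by $\Delta^m$ in absolute value, while total genericity keeps it nonzero. Hence $C$ is totally generic and $\Delta$-bound, and it has the required $L$ columns.

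For the additional claim I would argue modulo $\Delta$. By construction $A_0 C = \Delta B \equiv 0 \pmod \Delta$, so every column of $C$ solves $A_0 x \equiv 0 \pmod \Delta$. To bring the coefficient matrix into the desired normal form, let $A_0 = U H$ with $U$ unimodular and $H$ the Hermite normal form of $A_0$, upper triangular with positive diagonal; then $\det H = \abs{\det A_0} = \Delta$. For any column $c$ of $C$ one has $H c = U^{-1} A_0 c$, and since $A_0 c \equiv 0 \pmod\Delta$ and $U^{-1}$ is integral, also $H c \equiv 0 \pmod\Delta$. Taking $A := H$ then supplies an integer matrix in Hermite normal form with $\det(A) = \Delta$ whose associated congruence system is solved by every column of $C$.

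I would expect the only genuinely delicate part to be the bookkeeping that confines the minors of $(I \mid S)$ to the half-open interval $(0,1]$. It is precisely the equality $\abs{\det A_0} = \Delta$, afforded by $\Delta$-modularity rather than $\Delta$-submodularity alone, that sharpens the generic bound $\abs{\det D_J} \leq \Delta$ into a unit bound after division; this in turn is what produces the clean exponential bound $\Delta^m$ in the $\Delta$-bound property instead of a weaker estimate, and it is also what makes the determinant of the Hermite normal form come out equal to $\Delta$ exactly.
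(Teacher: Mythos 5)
Your proposal is correct and follows essentially the same route as the paper: pick an $r\times r$ submatrix $A_0$ with $\abs{\det A_0}=\Delta$, pass to $S=A_0^{-1}B$, transfer genericity and $1$-submodularity of $(I\mid S)$ to $S$ via \cref{prop:totally-vs-simple-modularity-and-genericity}, and clear denominators with \cref{lem:det-times-rational} to get $C=\Delta S$. The only (immaterial) difference is that the paper performs the unimodular reduction to Hermite normal form at the outset, whereas you factor $A_0=UH$ at the end; the resulting matrix $C$ and congruence system are the same.
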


\begin{proof}
Since $D$ is $\Delta$-modular, there exists a $r\times r$ submatrix of $D$ with determinant~$\pm \Delta$.
After rearranging the columns of $D$ and applying row operations (where we only scale by $\pm 1$), we can assume that $D$ has the form
\begin{align}
D = (A, u_1, u_2, \ldots, u_L),\label{eqn:D-after-reduction}
\end{align}
with columns $u_i\in \Z^r$ for $i=1, \ldots, L$ and $A$ being in Hermite normal form with determinant $\Delta$.
  
As $\det(A)=\Delta\neq 0$, $A$ has full rank.
Therefore each column $u_i$ is a linear combination of the columns of $A$, in other words for each $1\leq i\leq L$ there exists a rational vector $s_i\in\Q^r$ with $As_i = u_i$.
Let $S=(s_1, \ldots, s_L)$ and $C=\Delta S$.
By \cref{lem:det-times-rational}, $C$ is an integer matrix.
Also note that as $As_i=u_i$, we get that $Ac_i = A(\Delta s_i) = \Delta u_i = 0 \bmod \Delta$ for the columns $c_1,\ldots,c_L$ of $C$.

It remains to show that $C \in \Z^{r \times L}$ is $\Delta$-bound and totally generic.
Since $C = \Delta S$, it suffices to prove that $S$ is totally $1$-submodular and totally generic.
In order to see this, observe that by construction $D = A \cdot (I,S)$.
Because~$D$ is generic and $\Delta$-modular and $\det(A) = \Delta \neq 0$, this means that $(I,S)$ is generic and $1$-submodular.
Hence, in view of \cref{prop:totally-vs-simple-modularity-and-genericity}, $S$ is totally generic and totally $1$-submodular as claimed.
\end{proof}

It is known that the entries of the matrix~\eqref{eqn:D-after-reduction} in the proof of \cref{lem:existence-Deltabound-totally-gen} can be bounded by~$\Delta$ in absolute value (cf.~\cite[Lem.~1]{gribanovmalyshevpardalosveselov}).

\subsection{An algorithm to compute \texorpdfstring{$\g(\Delta,r)$}{g(\textDelta,r)}}
\label{sect:algorithm-description}

\cref{lem:existence-Deltabound-totally-gen} and its proof suggest an algorithm for finding a generic $\Delta$-modular matrix of size $r\times (r+L)$ with maximal $L$.

\begin{enumerate}[label=Step~\arabic*:]
  \item Construct all integer matrices $A\in \Z^{r\times r}$ in Hermite normal form with determinant $\det(A)=\Delta$.
  
  
  \item Find all solutions $x \in \Z_{\Delta}^r$ of $Ax=0 \bmod \Delta$. For each such solution $x$ calculate all possible representatives in $\Z^r$ with nonzero entries bounded by~$\Delta$ in absolute value.
  We can assume that the first entry is positive.
  
  
  \item Find a largest possible totally generic $\Delta$-bound matrix with the columns obtained in Step 2.
  
  
  \item Let $C \in \Z^{r \times L}$ be the totally generic $\Delta$-bound matrix found in Step 3.
  Then, $D=(A, AC/\Delta)$ is a generic $\Delta$-modular matrix of size $r\times(r+L)$.
\end{enumerate}
Pseudocode for this algorithmic approach is given as \cref{alg:computeHeller}.
Correctness of the algorithm follows directly from \cref{lem:existence-Deltabound-totally-gen}, so that in the remainder of this subsection we will be concerned with describing how we can implement the various steps in an efficient manner.

\begin{algorithm}[ht]
\caption{Compute $\g(\Delta, r)$}
\begin{algorithmic}[1]
  \Require $\Delta, r \in \Z_{>0}$
  \Ensure $\g(\Delta, r)$, generic $\Delta$-modular matrix $D \in \Z^{r \times \g(\Delta,r)}$
  \State $\g \gets 0$
  \State $D \gets 0$
  \State $H \gets $ list of all relevant Hermite normal forms\label{alg:hermite-step}
  \ForAll {$A\in H$}\label{alg:for-loop}
    \State $X \gets$ relevant integer solutions to $Ax = 0 \bmod \Delta$\label{alg:relevant-solutions}
    \State $C \gets$ largest possible totally generic $\Delta$-bound matrix with columns in $X$\label{alg:expensive-step}
    \If{$r+\operatorname{number-of-columns}(C) > \g$}
      \State $\g \gets r+\operatorname{number-of-columns}(C)$
      \State $D \gets (A, A C/\Delta)$
    \EndIf
  \EndFor
  \State \textbf{return} $\g, D$
\end{algorithmic}\label{alg:computeHeller}
\end{algorithm}

Note that Step 3 above, respectively Line~\ref{alg:expensive-step} in \cref{alg:computeHeller}, is by far the most computationally expensive step.
Therefore, we first investigate whether we actually need to run through \emph{all} the Hermite normal forms in Step 1, and likewise through \emph{all} integer solutions to the linear system in Step 2.
It turns out that we can reduce the possibilities quite a bit and only need to consider the \emph{relevant} of these objects as is hinted at in the description of \cref{alg:computeHeller} in Lines~\ref{alg:hermite-step} and~\ref{alg:relevant-solutions}.
Let us start with the Hermite normal forms.

\subsubsection{Relevant Hermite normal forms}

Among any pair~$A_1$ and~$A_2$ of integer matrices in Hermite normal form that give equivalent results, we need to iterate only over one of them.
Let us make more precise what we consider as equivalent results:

\begin{definition}
\label{def:equivalence}
Let $D_1, D_2 \in \Z^{r\times n}$.
We say that~$D_1$ is \emph{equivalent} to~$D_2$, in symbols $D_1 \simeq D_2$, if there exists a unimodular matrix $S \in \Z^{r \times r}$, a diagonal matrix $D \in \Z^{n \times n}$ with diagonal entries $\pm 1$, and a permutation matrix $P \in \Z^{n \times n}$ such that $D_2 = S D_1 D P$.
\end{definition}

Hence, $D_1 \simeq D_2$, if $D_2$ can be obtained from $D_1$ by applying integer row operations, rearranging columns and multiplying columns by~$-1$.
Note that both $\Delta$-modularity and genericity are preserved under this notion of equivalence.

Let $D_1=(A_1, U_1)$ be a generic $\Delta$-modular matrix with $A_1$ in Hermite normal form and let $A_2$ be a matrix in Hermite normal form and which is equivalent to $A_1$, i.e. $A_2 = S A_1 D P$ for suitable matrices $S,D$, and~$P$ as in \cref{def:equivalence}.
Then $D_2 = (S A_1 D P, S U_1) = (A_2, S U_1)$ is also generic $\Delta$-modular and will be found by the algorithm.
In particular, during the algorithm we do not have to run through the for loop for both~$A_1$ and~$A_2$.
However, even if $D_1 = (A_1, U_1), D_2 = (A_2, U_2)$ are matrices with $A_1, A_2$ in Hermite normal form and such that $D_1 \simeq D_2$, then~$A_1$ and~$A_2$ need not be equivalent.
There can be a submatrix $\tilde A_1$ of $U_1$ which is equivalent to $A_2$. Therefore, even if we restrict the equivalence to the Hermite normal forms that we investigate, and for each equivalence class we test only one matrix, we might still find resulting matrices during the algorithm that are equivalent.

This problem is related to finding a canonical form under the equivalence relation~$\simeq$.
We are not aware of any work in the literature solving this problem, but we discuss two operations below that can be used to reduce the amount of Hermite normal forms that have to be checked.
Note that a slightly different notion of equivalence has been studied by Paolini~\cite{paolini2017analgorithm}.
He says that two matrices $D,D' \in \Z^{r \times n}$ are equivalent, if there is a permutation matrix $P \in \Z^{n \times n}$ and an affine unimodular equivalence~$\varphi : \Z^r \to \Z^r$, that is $\varphi(x) = Ax + b$, for some unimodular $A \in \Z^{r \times r}$ and some $b \in \Z^r$, such that $D' = \varphi(D)P$.
Paolini defines a canonical form for this notion and devises an efficient algorithm to compute it.
The question on whether his approach can be adapted to our notion of equivalence will be left for future work.

\subsubsection*{Operation 1: Sorting the diagonal}

Let $A \in \Z^{r \times r}$ be a matrix in Hermite normal form with diagonal $(a_1, \ldots, a_r)$.
We claim that we can apply suitable column and row operations to obtain an equivalent matrix $A' \simeq A$ in Hermite normal form with diagonal $(a_1', \ldots, a_r')$ such that $a_1' \leq \ldots \leq a_r'$.
To see this it suffices to show how we may ``swap'' two adjacent diagonal entries, that is, if~$A$ has diagonal $(a_1, \ldots, a_{i-1}, a_i, a_{i+1}, a_{i+2}, \ldots, a_r)$ with $a_i > a_{i+1}$, then we can obtain an equivalent matrix $A'$ in Hermite normal form with the diagonal $(a_1, \ldots, a_{i-1}, a_{i+1}', a_i', a_{i+2}, \ldots, a_r)$ and $a_{i+1}' \leq a_i'$.
This can be achieved by following a simple procedure:
\begin{enumerate}[label=(\arabic*)]
 \item Write $b$ for the entry of~$A$ at position $(i,i+1)$ and exchange columns $i$ and $i+1$ of $A$.
 \item Perform the euclidean algorithm on $b$ and $a_{i+1}$ by applying row operations only involving rows~$i$ and~$i+1$ until the matrix is again upper triangular.
 \item Use rows $i,i+1,\ldots,r$ of the obtained matrix to restore the Hermite normal form condition for $i$-th row.
 \item Use rows $i+1,\ldots,r$ to restore the Hermite normal form condition for $(i+1)$-st row.
\end{enumerate}
The following scheme illustrates the procedure:
\begin{align*}
A &= \begin{pmatrix}
    a_1 \\
        & \ddots \\
        &        & a_i & b \\
        &        &   0 & a_{i+1} \\
        &        &     &         & \ddots \\
        &        &     &         &        & a_r
  \end{pmatrix}
  \rightarrow
  \begin{pmatrix}
    a_1 \\
        & \ddots \\
        &        & b       & a_i     \\
        &        & a_{i+1} & 0       \\
        &        &         &         & \ddots \\
        &        &         &         &        & a_r
  \end{pmatrix} \\
  &\rightarrow
  \begin{pmatrix}
    a_1 \\
        & \ddots \\
        &        & \gcd(b, a_{i+1}) & b'    \\
        &        &              0   & a_i'  \\
        &        &                  &         & \ddots \\
        &        &                  &         &        & a_r
  \end{pmatrix} = A'.
\end{align*}
Note that the entry of the obtained matrix~$A'$ at position $(i,i)$ is now $a_{i+1}'=\gcd(b, a_{i+1})\leq a_{i+1} < a_i$.
Since $A' \simeq A$, the determinant didn't change, and thus $a_i \cdot a_{i+1} = a_i' \cdot a_{i+1}'$.
Using $a_{i+1}'\leq a_{i+1}$, we obtain $a_i \leq a_i'$ and thus $a_{i+1}'\leq a_i'$, as desired.

In fact the described procedure shows that for $A = (a_{ij})$, we can assume that $a_{i,i}\leq \gcd(a_{i, i+1}, a_{i+1,i+1})$ for all $i=1, \ldots, r-1$ in addition to $a_{i,i}\leq a_{i+1, i+1}$.
It is possible that by exchanging also non-adjacent columns one may give a stronger condition based on greatest common divisors.
We have not looked into this further, though.

\subsubsection*{Operation 2: Sorting the last column}

Our second operation to reduce the number of relevant Hermite normal forms only applies to those with diagonal $(1, \ldots, 1, \Delta)$.
In this case, instead of having the last column be any of the vectors $(v_1, \ldots, v_{r-1}, \Delta)$ with $0\leq v_i\leq \Delta-1$ for $i=1,\ldots, r-1$, we can instead assume that $0 \leq v_1 \leq v_2 \leq \ldots \leq v_{r-1} \leq \Delta/2$.

Let $v_j > \Delta/2$ for some $j\in\{1, \ldots, r-1\}$.
We can multiply the $j$-th row by $-1$, then multiply the $j$-th column by $-1$, and then add the last row to the $j$-th row to obtain an equivalent matrix with identical entries except for the one in place of $v_j$ which is now $\Delta-v_j \leq \Delta/2$.
Note that we use here that the only nonzero elements in the $j$-th row and $j$-th column are the diagonal element and~$v_j$, otherwise we would have to do extra row additions to obtain a matrix in Hermite normal form again.

Next, to sort the last column, let $v_i > v_j$ with $i<j$.
We can simply exchange rows~$i$ and~$j$, and then exchange columns~$i$ and~$j$ to obtain an equivalent matrix in Hermite normal form with $v_i$ exchanged for~$v_j$.
Again we used the specific form of the original matrix here.

\medskip
Even in the case that $\Delta$ is prime -- which means that the only possible diagonal is $(1, \ldots, 1, \Delta)$ -- these two operations alone do not produce a canonical form.
As an example we observe that for $r=3$ and $\Delta=7$ the two matrices
\begin{equation*}
  A_1 = \begin{pmatrix}
    1 \\
     & 1 & 2 \\
     &   & 7
  \end{pmatrix}
  \qquad \text{and} \qquad
  A_2 = \begin{pmatrix}
    1 \\
     & 1 & 3 \\
     &   & 7
  \end{pmatrix}
\end{equation*}
are equivalent.
Indeed, by first multiplying the third column of $A_1$ by $-1$, then swapping the second and third column, and then computing the Hermite normal form of the resulting matrix, we obtain~$A_2$.

However, applying the two operations drastically reduces the number of Hermite normal forms that we need to consider in Line~\ref{alg:hermite-step} of \cref{alg:computeHeller}.
To exemplify this claim, let $H(\Delta,r)$ be the number of Hermite normal forms of size $r \times r$ with determinant~$\Delta$, let $H_{op}(\Delta,r)$ be the number of such Hermite normal forms that remain after applying Operations~1 and~2, and let $H_{\simeq}(\Delta,r)$ be the number of pairwise inequivalent such Hermite normal forms.
For comparison, in \cref{tbl:HNFs} we list these numbers for $r=4$ and some small values of~$\Delta$.

\begin{table}[ht]
  \begin{tabular}{c|*{30}{c}}
    $\Delta$             &    12 &    13 &    14 &    15 &    16 &    17 &    18 &    19 &    20     \\
    \hline
    $H(\Delta,4)$       &  6200 &  2380 &  6000 &  6240 & 11811 &  5220 & 18150 &  7240 & 24180     \\
    $H_{op}(\Delta,4)$        &   652 &    84 &   283 &   204 &  1267 &   165 &  1287 &   220 &  2523     \\
    $H_{\simeq}(\Delta,4)$ &   149 &    37 &    99 &    89 &   250 &    64 &   259 &    80 &   360     
  \end{tabular}
  \caption{A few numbers of Hermite normal forms under various reductions discussed in the text.}
  \label{tbl:HNFs}
\end{table}


The number $H(\Delta,r)$ is known exactly and depends on the prime decomposition $\Delta = p_1^{e_1} \cdot \ldots \cdot p_t^{e_t}$ of~$\Delta$ (cf.~\cite{gruber1997alternative} and the discussion therein for other formulae):
\begin{equation*}
H(\Delta,r) = \sum_{\substack{d_1,\ldots,d_r \in \Z_{>0}\\ d_1\cdot \ldots \cdot d_r = \Delta}} d_1^0 d_2^1 \cdot \ldots \cdot d_r^{r-1}  = \prod_{i=1}^t \prod_{j=1}^{e_i} \frac{p_i^{j+r-1} - 1}{p_i^j - 1} .
\end{equation*}

We are not aware of a similarly good understanding of the number of inequivalent Hermite normal forms:

\begin{question}
Is there a closed formula for $H_{\simeq}(\Delta,r)$ as well?
What is the growth rate of the numbers $H_{\simeq}(\Delta,r)$ ?
\end{question}

\subsubsection{Relevant integer solutions}

Now, we investigate the \emph{relevant} solutions to the equation in Line~\ref{alg:relevant-solutions} of \cref{alg:computeHeller}.
For a matrix $A \in \Z^{r \times r}$ in Hermite normal form and with $\det(A) = \Delta$, we therefore consider the linear system $Ax = 0 \bmod \Delta$.

Since~$A$ is in Hermite normal form, this system can be solved by backwards substitution, but as $\det(A) = \Delta$, the matrix~$A$ is not invertible over~$\Z_\Delta$.
Thus, there will be multiple solutions in~$\Z_\Delta^r$ and for each of those there might be several integer representatives.

As a first step, we determine the number of solutions of the linear system above in~$\Z_\Delta^r$.
For this, we need the Smith normal form of an integer matrix (cf.~Cohen~\cite[Ch.~2]{cohen1993acourse}).

\begin{theorem}
\label{thm:SNF}
Let $A \in \Z^{r \times r}$ have non-zero determinant.
Then, there exist unimodular matrices $Q, P \in \Z^{r \times r}$ such that $PAQ = \diag(\alpha_1, \ldots, \alpha_r)$, where the integers $\alpha_1,\ldots,\alpha_r$ are the elementary divisors of~$A$ satisfying $0 \leq \alpha_i | \alpha_{i+1}$, for all $1 \leq i \leq r-1$.
\end{theorem}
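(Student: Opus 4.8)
The plan is to give the standard constructive proof, using the fact that the elementary integer row and column operations---swapping two rows or columns, negating a row or column, and adding an integer multiple of one row (column) to another---are each realized by left or right multiplication with a unimodular matrix. Since products of unimodular matrices are again unimodular, it suffices to transform $A$ into the claimed diagonal shape using only such operations, collecting the accumulated left operations into $P$ and the right operations into $Q$.

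First I would isolate a single pivoting step and then iterate it. Among all nonzero entries of the current matrix, pick one of minimal absolute value and move it to position $(1,1)$ by a row swap and a column swap. For each entry $a_{1j}$ in the first row, write $a_{1j} = q\,a_{11} + \rho$ with $0 \le \rho < \abs{a_{11}}$ by division with remainder and subtract $q$ times the first column from the $j$-th column; proceed analogously for the first column using row operations. If some remainder $\rho$ is nonzero, then it has strictly smaller absolute value than the current minimum, so I restart the pivot search; as the minimal nonzero absolute value is a nonnegative integer, this restart can occur only finitely often. Eventually the first row and column are cleared except for $a_{11}$.

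The delicate point---and the step I expect to be the main obstacle---is enforcing the divisibility chain $\alpha_i \mid \alpha_{i+1}$. After clearing the first row and column, I must check that $a_{11}$ divides every remaining entry $a_{ij}$ with $i,j \ge 2$. If some $a_{ij}$ is not divisible by $a_{11}$, I add row $i$ to row $1$ (reintroducing $a_{ij}$ into the first row) and repeat the reduction above; this again strictly decreases the minimal absolute value, so termination follows from the same monovariant. Once $a_{11}$ divides all entries of the lower-right $(r-1)\times(r-1)$ block, I set $\alpha_1 = a_{11}$ and recurse on that block; by induction its pivots are all multiples of $\alpha_1$, which yields $\alpha_1 \mid \alpha_2 \mid \cdots \mid \alpha_r$. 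Negating rows where necessary makes all diagonal entries nonnegative, and since $\det(A) \neq 0$ with $\det(PAQ) = \pm\det(A) = \alpha_1 \cdots \alpha_r$, every $\alpha_i$ is a positive integer.

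Finally, to justify calling the $\alpha_i$ the elementary divisors, I would invoke their characterization through determinantal divisors: for $1 \le k \le r$ let $d_k$ denote the greatest common divisor of all $k \times k$ minors of $A$, with $d_0 = 1$. A short computation via the Cauchy--Binet formula shows that each $d_k$ is invariant under left and right multiplication by unimodular matrices, and for $\diag(\alpha_1,\ldots,\alpha_r)$ with $\alpha_1 \mid \cdots \mid \alpha_r$ one reads off $d_k = \alpha_1 \cdots \alpha_k$. Hence $\alpha_k = d_k / d_{k-1}$ is determined by $A$ alone, which simultaneously yields uniqueness of the elementary divisors and identifies the diagonal produced by the algorithm with them.
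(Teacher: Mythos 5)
Your proof is correct: it is the standard constructive derivation of the Smith normal form via elementary unimodular row and column operations, with the usual monovariant (the minimal nonzero absolute value) guaranteeing termination and the determinantal-divisor characterization identifying the diagonal entries as the elementary divisors. The paper does not prove this statement itself---it is quoted as a classical result with a citation to Cohen's book, where essentially the same algorithm appears---so there is nothing to compare beyond noting that your argument is the expected textbook one and is complete.
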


The matrix $S := PAQ$ in this theorem is uniquely determined and is called the \emph{Smith normal form} of~$A$.
By definition, we have $\abs{\det(A)} = \det(S) = \alpha_1 \cdot \ldots \cdot \alpha_r$.

\begin{lemma}
\label{lem:number-solutions-linear-system}
Let $a,b,\Delta \in \Z$ be integers with $\Delta > 0$ and let $A \in \Z^{r \times r}$ have determinant $\det(A)=\Delta$.
\begin{enumerate}[label=(\roman*)]
 \item The equation $ax = b \bmod \Delta$ has either $0$ or $\gcd(a, \Delta)$ solutions in~$\Z_\Delta$.
 \item There are exactly $\Delta$ solutions to $Ax = 0 \bmod \Delta$ in $\Z_\Delta^r$.
\end{enumerate}
\end{lemma}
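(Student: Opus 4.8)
The plan is to prove both parts by leveraging the Smith normal form of $A$, which diagonalizes the problem and reduces everything to the one-dimensional computation in part~(i).

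\textbf{Part (i).} First I would handle the scalar equation $ax = b \bmod \Delta$. Write $g = \gcd(a,\Delta)$. The standard theory of linear congruences says the equation is solvable if and only if $g \mid b$, and when it is solvable, the solution set in $\Z_\Delta$ is a coset of the subgroup $\{x : ax \equiv 0 \bmod \Delta\}$, which has exactly $g$ elements (the solutions of $ax \equiv 0$ are precisely the multiples of $\Delta/g$ in $\Z_\Delta$). Hence there are either $0$ or $g = \gcd(a,\Delta)$ solutions, as claimed. This is elementary and should be dispatched quickly.

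\textbf{Part (ii).} The key idea is to count solutions of $Ax = 0 \bmod \Delta$ using the Smith normal form from \cref{thm:SNF}. Write $S = PAQ = \diag(\alpha_1,\ldots,\alpha_r)$ with unimodular $P,Q$. Since $P$ is invertible over $\Z_\Delta$ (its determinant is $\pm 1$, hence a unit mod $\Delta$), the equation $Ax \equiv 0 \bmod \Delta$ is equivalent to $PAx \equiv 0 \bmod \Delta$, i.e. $S(Q^{-1}x) \equiv 0 \bmod \Delta$. Substituting $y = Q^{-1}x$ and noting that $Q^{-1}$ is a bijection on $\Z_\Delta^r$, the number of solutions $x$ equals the number of solutions $y$ of the diagonal system $\alpha_i y_i \equiv 0 \bmod \Delta$ for $i = 1,\ldots,r$. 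By part~(i) (with $b=0$, so solutions always exist), each individual congruence $\alpha_i y_i \equiv 0 \bmod \Delta$ has exactly $\gcd(\alpha_i,\Delta)$ solutions in $\Z_\Delta$, and since the coordinates decouple, the total count is
\[
\prod_{i=1}^r \gcd(\alpha_i,\Delta).
\]
The final step is to observe that $\gcd(\alpha_i,\Delta) = \alpha_i$ for every $i$: indeed, each elementary divisor $\alpha_i$ divides $\det(A) = \alpha_1 \cdots \alpha_r = \Delta$, so $\alpha_i \mid \Delta$ and thus $\gcd(\alpha_i,\Delta) = \alpha_i$. Therefore the product telescopes to $\alpha_1 \cdots \alpha_r = \det(A) = \Delta$, giving exactly $\Delta$ solutions.

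The main subtlety to get right is the reduction to the diagonal system, specifically the justification that left-multiplication by the unimodular matrix $P$ and the change of variable $y = Q^{-1}x$ are both bijections of $\Z_\Delta^r$ that preserve the solution count; this hinges on unimodular matrices remaining invertible modulo $\Delta$, which follows since their determinant is a unit in $\Z_\Delta$. Everything else is a direct bookkeeping of the decoupled congruences together with the divisibility $\alpha_i \mid \Delta$, so no step should present a genuine obstacle once the Smith normal form is invoked.
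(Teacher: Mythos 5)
Your proposal is correct and follows essentially the same route as the paper: part~(i) via the standard count of solutions to a linear congruence, and part~(ii) by passing to the Smith normal form $S = PAQ$, using invertibility of $P$ and $Q$ over $\Z_\Delta$ to reduce to the decoupled system $\alpha_i y_i \equiv 0 \bmod \Delta$, and multiplying the counts $\gcd(\alpha_i,\Delta) = \alpha_i$ to get $\Delta$. No substantive differences from the paper's argument.
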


\begin{proof}
(i): By linearity we only need to consider $b=0$.
If $ax=0 \bmod \Delta$, then $ax=k\Delta$ for some integer $k$. Let $d=\gcd(a, \Delta)$ and $a=ds, \Delta=dt$ be with $\gcd(s,t)=1$.
Plugging in, we obtain $dsx=kdt$, or equivalently $sx=kt$.
As $s$ divides $sx=kt$, but $\gcd(s,t)=1$, we have $x = (k/s)t$, where $k/s=m$ is an integer.
It is easy to check that indeed for any integer~$m$ the number $x=mt$ is a solution.
We now need to check that these give~$d$ distinct solutions in $\Z_\Delta$.

To this end, let $m_1, m_2\in \Z$ be with $m_1t=m_2t \bmod \Delta$.
Then, $(m_1-m_2)t = k\Delta = k d t$ for some integer $k$.
Therefore, $m_1-m_2=kd = 0\bmod d$ and $m_1=m_2 \bmod d$. Hence, each $0\leq m < d$ gives a distinct solution and this proves the claim.

(ii): Let $S=PAQ$ be the Smith normal form of~$A$.
Since as integer matrices $\abs{\det(P)} = \abs{\det(Q)} = 1$, both matrices~$Q,P$ are invertible over~$\Z_\Delta^r$.
Further, we have $\alpha_1 \cdot \ldots \cdot \alpha_r = \Delta$ for the diagonal elements of~$S$.

Multiplying $Ax = 0 \bmod \Delta$ from the left by~$P$ gives us
\[
PAx = PAQQ^{-1}x = S(Q^{-1}x) = 0 \bmod \Delta.
\]
As $P$ and $Q$ are invertible, the number of solutions to $Ax = 0 \bmod \Delta$ and $Sy = 0 \bmod \Delta$ with $y = Q^{-1}x$ are the same.
The $i$-th row of the system $Sy = 0 \bmod \Delta$ is given by $\alpha_i y_i = 0 \bmod \Delta$, which by part~(i) has exactly~$\alpha_i$ solutions in~$\Z_\Delta$, since~$\alpha_i$ is a divisor of~$\Delta$.
Combining these solutions for each row, we obtain exactly $\alpha_1 \cdot \ldots \cdot \alpha_r = \Delta$ solutions of $Ax = 0 \bmod \Delta$ over~$\Z_\Delta^r$.
\end{proof}

Now, as the second step, for each of the~$\Delta$ solutions $x \in \Z_\Delta^r$ to $Ax = 0 \bmod \Delta$, we want to find all the integer representatives in $x + \Delta \Z^r$ that are relevant for the computation of $\g(\Delta,r)$ by \cref{alg:computeHeller}.
Since we want to build a totally generic $\Delta$-bound matrix out of these vectors, in particular each of the entries of such a representative needs to be non-zero and bounded by~$\Delta$ in absolute value.
Further, if we find a matrix using the representative~$v$, then we will also find an equivalent matrix using the representative~$-v$, and no matrix can contain both of~$v$ and $-v$ because this would create a vanishing minor.
Therefore, we only need to take one of~$v$ and~$-v$, which we can achieve, for example, by fixing the first entry of each representative to be positive. 

In summary, if $x\in\Z_\Delta^r$ is a solution and $v \in x + \Delta \Z^r$ a representative under the discussed conditions, we have that if $x_i = 0$, for some index~$i$, then $v_i \in \{\Delta, -\Delta\}$, and if $x_i = k \neq 0$, then $v_i \in \{k, k-\Delta\}$.
This gives~$2$ options for each entry of~$v$, except for the first entry, which must be positive.
Hence, that leaves us with~$2^{r-1}$ relevant representatives of~$x$.
The list~$X$ in Line~\ref{alg:relevant-solutions} of \cref{alg:computeHeller} will therefore in total have $2^{r-1}\Delta$ vectors.

For particular matrices~$A$, we may further remove irrelavant representatives.
Indeed, if~$v$ and~$\ell v$ are elements in the list~$X$, for some $\ell > 1$, and we find a matrix using the column~$\ell v$, then we will also find a matrix using the column~$v$.
This means that we only need to take~$v$, and can neglect~$\ell v$.

\begin{remark}
The previous observations show that the matrices~$C$ that are constructed in \cref{alg:computeHeller} have at most~$2^{r-1}\Delta$ columns, which means that the generic $\Delta$-modular matrices $(A, AC/\Delta)$ have at most $r+2^{r-1}\Delta$ columns.
This proves $\g(\Delta, r) \in \cO(\Delta)$, for fixed values of~$r$, without using the theory of finite fields employed in \cref{sect:mds}.
However, here the implied constant depends exponentially on~$r$ instead of being constant.
Also, as discussed after \cref{thm:ball-mds}, the linear bound $\g(\Delta, r)\leq r+p-1$ is not very involved, and in particular is independent of Ball's result \cref{thm:ball-mds}.

A similar argument was carried out in~\cite[Rem.~1]{averkovschymura2023onthemaximal}, showing the bound $\h(\Delta,r) \leq 3^r \Delta$ for the maximal column number of not necessarily generic $\Delta$-modular matrices with~$r$ rows.
\end{remark}

\subsection{Finding a totally generic \texorpdfstring{$\Delta$}{Delta}-bound matrix with the most columns}

After the previous reductions have been carried out and, for a given Hermite normal from~$A$, the candidate set~$X$ has been computed, how can we now find a largest possible totally generic $\Delta$-bound matrix~$C$ with columns in~$X$?
We solve this problem by translating it into a clique problem in a suitably defined (hyper-)graph.

Recall that a clique in a simple graph $\cG = (V, E)$ is a collection of vertices $K \subseteq V$, so that for every $i, j \in K$ with $i \neq j$, we have $\{i,j\} \in E$.
A clique is called \emph{maximal} if it is not a proper subset of another clique, and it is called \emph{maximum} if there is no clique of larger cardinality in~$\cG$.
Now, let $\cH = (V,H)$ be a simple hypergraph, where $H \subseteq 2^V$ is the set of hyperedges.
For $k \geq 2$, a \emph{$k$-hyperclique} in~$\cH$ is a set $K \subseteq V$ of vertices so that every subset $I \subseteq K$ of size $1 < |I| \leq k$ is a hyperedge.
Maximal and maximum $k$-hypercliques are defined analogously to maximal and maximum cliques in graphs, respectively.

Now, assume we are given a list $X = \{v_1, \ldots, v_n\}$ of vectors with $n \leq 2^{r-1} \Delta$ generated in Line~\ref{alg:relevant-solutions} of \cref{alg:computeHeller}, and we want to build a maximum totally generic $\Delta$-bound matrix~$C$ with columns from~$X$.
We define the hypergraph $\cH_r = ([n],H_r)$, where $I \subseteq [n]$ is a hyperedge, if and only if $1<|I|\leq r$ and the matrix $(v_i : i \in I)$ is totally generic $\Delta$-bound.
Note that the hypergraph~$\cH_r$ is downward-closed, because if a matrix~$M$ is totally generic $\Delta$-bound, then so is any submatrix of~$M$ obtained by removing columns.
The problem of finding the matrix~$C$ above is now equivalent to finding a maximum $r$-hyperclique in~$\cH_r$.

Let us first consider the case $r=2$, in which the hypergraph $\cH_2$ is just a simple graph.
For the maximum clique problem in graphs there are various software implementations available.
We used the Sagemath~\cite{sagemath} implementation \verb|sage.graphs.cliquer.max_clique|.
For $r \geq 3$, we implemented an algorithm for the hypergraph clique problem in \CC, based on \verb|hClique| described in~\cite{torres2017hclique}.

\subsection{Numerical results}
\label{sect:numerical-results}

In this last part, we discuss the computational results that we obtained by implementing the algorithm described in the previous sections and running it for small parameters~$r$ and~$\Delta$.
The \verb|sage| and \CC\ source code and the computed data can be found at \url{https://github.com/BKriepke/DeltaModular}.

\subsubsection{The case \texorpdfstring{$r=2$}{r=2}}

Our implementation allows to compute the numbers $\g(\Delta,2)$, for every $2 \leq \Delta \leq 450$ in a reasonable amount of time.
In the following, we try to explain the obtained values as much as possible and make some hypotheses that they suggest.

We refrain from writing down each particular value of $\g(\Delta,2)$, for $\Delta \leq 450$, and instead refer the interested reader to the data in the repository linked above.
Investigating the data, the first observation that comes to mind is that $\g(\Delta,2)$ seems to be a non-decreasing function of~$\Delta$.
Up to now, we could not find a theoretical explanation.

\begin{conjecture}
\label{conj:monotonicity-two-rows}
The function $\Delta \mapsto \g(\Delta,2)$ is non-decreasing.
\end{conjecture}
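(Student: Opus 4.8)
The plan is to first replace the conjecture by an equivalent, cleaner statement that isolates where the ``achieves $\Delta$'' requirement in the definition of $\g(\Delta,2)$ does the work. Introduce the auxiliary quantity $\widetilde{\g}(\Delta,2)$, the maximum number of columns of a generic $2\times n$ integer matrix all of whose $2\times 2$ minors are non-zero and bounded by~$\Delta$ in absolute value, \emph{dropping} the requirement that some minor equal $\pm\Delta$. Then $\widetilde{\g}(\cdot,2)$ is non-decreasing since its defining constraint only weakens, and $\g(\Delta,2)\le\widetilde{\g}(\Delta,2)$. Conversely, any matrix counted by $\widetilde{\g}(\Delta,2)$ has maximal minor equal to some integer $\Delta'$ with $1\le\Delta'\le\Delta$ and is therefore generic $\Delta'$-modular, which gives $\widetilde{\g}(\Delta,2)=\max_{1\le\Delta'\le\Delta}\g(\Delta',2)$. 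From this identity one reads off that $\Delta\mapsto\g(\Delta,2)$ is non-decreasing if and only if $\g(\Delta,2)=\widetilde{\g}(\Delta,2)$ for every~$\Delta$; that is, if and only if a column-maximal generic matrix bounded by~$\Delta$ can always be chosen to attain a minor $\pm\Delta$.

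The remaining task is the following \textbf{bump step}: given a column-maximal generic matrix $A$ with all minors non-zero and of absolute value at most $\Delta$ but with maximal minor $m\le\Delta-1$, produce a matrix with the same number of columns, still generic and bounded by~$\Delta$, whose maximal minor is strictly larger. Iterating raises the maximum to exactly $\Delta$. The natural attempt is a single-column stretch: normalise by a unimodular transformation so that a column $u$ realising the maximal minor equals $e_1$, so that $\det(u,\cdot)$ simply reads off the second coordinate; then replace one column $v$ by $v'=v+w$ for a suitable $w\in\Z^2$ chosen so that $\lvert\det(u,v')\rvert=m+1$ while $\det(v',v_k)$ remains non-zero and at most~$\Delta$ for all other columns $v_k$. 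The slack $\Delta-m\ge 1$ is what one would exploit here.

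The hard part will be controlling the $n-1$ minors that change when $v$ is moved. A column-maximal configuration is typically ``tight'' in many lattice directions at once, so incrementing a single column can push some other minor above~$\Delta$ or down to~$0$, and there is no continuous stretch available inside $\Z^2$: an integer linear map has determinant either $1$, preserving all minors, or at least $2$, at least doubling them, so no global rescaling can increase the maximum by exactly one. Thus the bump step depends on fine structural information about optimal configurations for $r=2$, which is exactly what is presently missing; this is the same rigidity that makes the excess $\g(\Delta,2)-\Delta$ mysterious and underlies \cref{conj:constant-excess}.

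Two fallback routes seem worth pursuing in parallel. One is to build explicit near-optimal families realising $\g(\Delta,2)$ for \emph{every}~$\Delta$, governed by Farey/Stern--Brocot distances in the spirit of the matrices $M(a,b)$, for which the determinant $\lvert il-jk\rvert$ of two columns is precisely the relevant arithmetic quantity; the gaps in the families~\ref{itm:F1}--\ref{itm:F3} (already at $\Delta=7$) show that genuinely new constructions would be needed. The other is an amortised argument bounding how fast the excess can drop, using that monotonicity of $\g$ is equivalent to the excess decreasing by at most~$1$ per unit increase of~$\Delta$. In all approaches, the crux is the same: converting the available slack into a legal increase of the maximal minor against the rigidity of the integer lattice.
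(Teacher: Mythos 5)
The statement you are addressing is stated in the paper as \cref{conj:monotonicity-two-rows} and is left \emph{open} there: the authors explicitly write that they could not find a theoretical explanation, and the only evidence offered is the computational data for $2\leq\Delta\leq 450$. So there is no proof in the paper to compare yours against --- and, more importantly, what you have written is not a proof either.

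Your reduction is correct and worth keeping: defining $\widetilde{\g}(\Delta,2)$ by dropping the requirement that some minor attain $\pm\Delta$, one indeed gets $\widetilde{\g}(\Delta,2)=\max_{1\leq\Delta'\leq\Delta}\g(\Delta',2)$, so monotonicity of $\g(\cdot,2)$ is equivalent to the assertion that a column-maximal generic matrix with all $2\times 2$ minors non-zero and at most $\Delta$ in absolute value can always be chosen to attain a minor of absolute value exactly $\Delta$. But the entire content of the conjecture now sits in your ``bump step,'' and you do not carry it out: you describe a single-column perturbation $v\mapsto v+w$, correctly observe that the $n-1$ minors involving $v$ all move simultaneously and that nothing guarantees they stay non-zero and bounded by $\Delta$, and then stop. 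No candidate $w$ is exhibited, no structural property of optimal configurations is established that would guarantee one exists, and your own discussion (the absence of intermediate-determinant maps on $\Z^2$, the tightness of extremal configurations) explains exactly why the naive choice can fail. The two fallback routes are likewise only named, not executed. The proposal is therefore an honest and useful reformulation of the conjecture together with an acknowledged gap at the one step that carries all the difficulty; as a proof it is incomplete, and the conjecture remains open.
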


In view of the matrix in~\eqref{eqn:Delta+2Ex} and \cref{thm:linear-upper-bound}, we have $\Delta + 2 \leq \g(\Delta,2) \leq p + 1$, with~$p$ being the smallest prime larger than~$\Delta$.
Let us first have a look at those $\Delta \leq 450$ for which this upper bound is attained.
A large part of these values is of course explained by the three infinite families~\ref{itm:F1}--\ref{itm:F3} from \cref{sect:constr-two-rows}.
\cref{tab:gDelta2Values} shows the results for the remaining parameters~$\Delta$ in the range $7 \leq \Delta \leq 124$, compared to the upper bound~$p+1$.

\begin{table}[ht]
  \begin{tabular}{c|*{30}{c}}
    $\Delta$       &  7 & 13 & 19 & 23 & 24 & 25 & 31 & 32 & 33 & 34 & 37 & 43 & 47 &   \\
    $\g(\Delta, 2)$ & 10 & 16 & 23 & 27 & \textbf{30} & \textbf{30} & 34 & 36 & 37 & \textbf{38} & \textbf{42} & 46 & 51 &   \\
    $p+1$          & 12 & 18 & 24 & 30 & \textbf{30} & \textbf{30} & 38 & 38 & 38 & \textbf{38} & \textbf{42} & 48 & 54 &   \\
    \hline
    $\Delta$       & 48 & 49 & 53 & 54 & 55 & 61 & 62 & 63 & 64 & 67 & 73 & 74 & 75 &   \\
    $\g(\Delta, 2)$ & \textbf{54} & \textbf{54} & 57 & 59 & \textbf{60} & 65 & 67 & 67 & \textbf{68} & 70 & 76 & 78 & 78 &   \\
    $p+1$          & \textbf{54} & \textbf{54} & 60 & 60 & \textbf{60} & 68 & 68 & 68 & \textbf{68} & 72 & 80 & 80 & 80 &   \\
    \hline
    $\Delta$       & 76 & 79 & 83 & 84 & 85 & 89 & 90 & 91 & 92 & 93 & 94 & 97 & 103 &   \\
    $\g(\Delta, 2)$ & \textbf{80} & 82 & 87 & 89 & 89 & 93 & 94 & 94 & 96 & 96 & \textbf{98} & 100 & 106 &   \\
    $p+1$          & \textbf{80} & 84 & 90 & 90 & 90 & 98 & 98 & 98 & 98 & 98 & \textbf{98} & 102 & 108 &   \\
    \hline
    $\Delta$       & 109 & 113 & 114 & 115 & 116 & 117 & 118 & 119 & 120 & 121 & 122 & 123 & 124 &   \\
    $\g(\Delta, 2)$ & 112 & 117 & 119 & 119 & 120 & 120 & 122 & 123 & 126 & 126 & 126 & 126 & \textbf{128} &   \\
    $p+1$          & 114 & 128 & 128 & 128 & 128 & 128 & 128 & 128 & 128 & 128 & 128 & 128 & \textbf{128} &   \\
    \end{tabular}
  \caption{Values for $\g(\Delta,2)$ compared with the upper bound $p+1$ from \cref{thm:linear-upper-bound}, for those $\Delta \leq 124$ that do not belong to any of the families~\ref{itm:F1}--\ref{itm:F3} from \cref{sect:constr-two-rows}. In bold are those values for which the upper bound is met.}
  \label{tab:gDelta2Values}
\end{table}

There are some further values of~$\Delta$ for which we can give an explicit construction that gives the correct value of~$\g(\Delta,2)$, even though those values do not attain the upper bound of \cref{thm:linear-upper-bound} (except for $\Delta=24$).
Indeed, for $\Delta = 30s + 24$, with~$s$ an integer, consider the matrix
\begin{align}
M(a,b) =  \left(\begin{matrix}
    0 & 1 & \cdots &     1 &    2 & \cdots &      2 &      3 & \cdots &      3    \\
    1 & 0 & \cdots & 11s+9 & 6s+5 & \cdots & 16s+13 & 12s+10 & \cdots & 21s+17 
  \end{matrix}\right. \notag\\
  \phantom{++++}
  \left.\begin{matrix}
         4 & \cdots &      4            &      5 & \cdots &      5   \\
    18s+15 & \cdots & 26s+13+\nu_s & 25s+21 & \cdots & 30s+24
  \end{matrix}\right),\label{eqn:Delta24mod30}
\end{align}
where $\nu_s$ is a suitable integer with $0 \leq \nu_s \leq 8 - s/2$.
This construction has the maximal number $\g(30s+24, 2) = \Delta+2+\floor{\nu_s/2}$ of columns; see \cref{tab:nuValues} for the precise values of the parameters~$\nu_s$.
In particular, for $s\leq 12$, this construction has more columns than the one given by~\eqref{eqn:Delta+2Ex}.
For $s \in \{13, 14\}$, we have $\g(\Delta, 2)=\Delta+2$,
which further indicates that eventually all even values~$\Delta$ either satisfy $\g(\Delta,2)=\Delta+4$, if $\Delta = 2 \bmod 3$, or $\g(\Delta,2)=\Delta+2$ otherwise.

\begin{table}[ht]
  \begin{tabular}{c|*{30}{c}}
    $s$                        & 0 & 1 & 2 & 3 & 4 & 5 & 6 & 7 & 8 & 9 & 10 & 11 & 12 \\
    $\Delta=30s+24$            & 24 & 54 & 84 & 114 & 144 & 174 & 204 & 234 & 264 & 294 & 324 & 354 & 384 \\
    $\nu_s$                    & 8 & 6 & 6 & 6 & 6 & 4 & 4 & 4 & 4 & 2 & 2 & 2 & 2 \\
    $\Delta+2+\floor{\nu_s/2}$ & 30 & 59 & 89 & 119 & 149 & 178 & 208 & 238 & 268 & 297 & 327 & 357 & 387 
  \end{tabular}
  \caption{The values $\nu_s$ for the construction in \eqref{eqn:Delta24mod30} realizing $\g(\Delta,2)$.}
 \label{tab:nuValues}
\end{table}


Let us now have a look at the excess $\varepsilon_\Delta := \g(\Delta,2) - (\Delta+2) \geq 0$ of $\g(\Delta,2)$ over the lower bound that is implied by~\eqref{eqn:Delta+2Ex}.
Already written down as \cref{conj:constant-excess} in \cref{sect:constr-two-rows}, our data suggests that~$\varepsilon_\Delta$ might be upper bounded by a constant.
In \cref{fig:gDelta2Values}, we have plotted~$\varepsilon_\Delta$ against $2 \leq \Delta \leq 450$, and we make the following particular observations explaining most of the values:

\begin{itemize}
  \item We have $\g(\Delta, 2) \leq \Delta+6$, equivalently $\varepsilon_\Delta \leq 4$, for all $2\leq\Delta\leq 450$.
  In fact, we even have $\g(\Delta,2) \leq \Delta+4$, for all $170\leq \Delta\leq 450$.
  \item We have $\g(\Delta,2) = \Delta+3$, for every \emph{odd} number $171 \leq \Delta \leq 450$.
  \item The values $\Delta\geq 170$ with $\g(\Delta,2) = \Delta+4$ are even (by the previous observation), and either $\Delta = 2 \bmod 3$ or $\Delta\in\{174, 184, 204, 208, 234, 264\}$.
  \item The even values $\Delta \geq 170$ with $\g(\Delta,2)=\Delta+3$ are given by
  \[
  \Delta\in\{214, 244, 274, 294, 304, 324, 354, 384\}.
  \]
  They all have the form $\Delta=4 \bmod 30$ or $\Delta=24 \bmod 30$.
  The values $\Delta=24 \bmod 30$ are realized by the family in~\eqref{eqn:Delta24mod30}.
  For $\Delta=4 \bmod 30$ we suspect that there is a similar family.
  These families will eventually have size $\leq \Delta+2$.
  \item The smallest even value $\Delta$ with $\Delta+1$ nonprime and $\g(\Delta,2) < \Delta+4$ is $\Delta=132$ with $\g(132,2)=134$.
  \item The only values $\Delta$ which meet the upper bound of \cref{thm:linear-upper-bound} and which do not belong to the families ~\ref{itm:F1}--\ref{itm:F3} are given by
  \[
  \Delta\in\{24, 25, 34, 37, 48, 49, 55, 64, 76, 94, 124, 127, 154, 168, 169, 208\}.
  \] 
  \item The only even values of $\Delta = 2 \bmod 3$ with $\g(\Delta,2) \neq \Delta+4$ are $\g(2,2) = 4 = \Delta+2$ and $\g(62,2) = 67 = \Delta+5$.
  \item The values $\Delta$ with $\g(\Delta,2)=\Delta+5$ are:
  \begin{itemize}
    \item $\Delta\in\{25, 49, 121, 169\}$, i.e.~$\Delta=p^2$ for $p\in\{5, 7, 11, 13\}$
    \item $\Delta\in\{54, 84, 114, 144\}$, i.e.~$\Delta = 24 \bmod 30$, realized by a generic $\Delta$-modular matrix of the form~\eqref{eqn:Delta24mod30}; compare \cref{tab:nuValues}.
    \item $\Delta\in\{37, 55, 62, 127, 142\}$
  \end{itemize}
  \item The values $\Delta$ with $\g(\Delta,2) = \Delta+6$ are $\Delta\in\{24, 48, 120, 168\}$, i.e.~$\Delta=p^2-1$ for $p\in\{5, 7, 11, 13\}$.
\end{itemize}

\begin{figure}[ht]
  \includegraphics*[width=\textwidth]{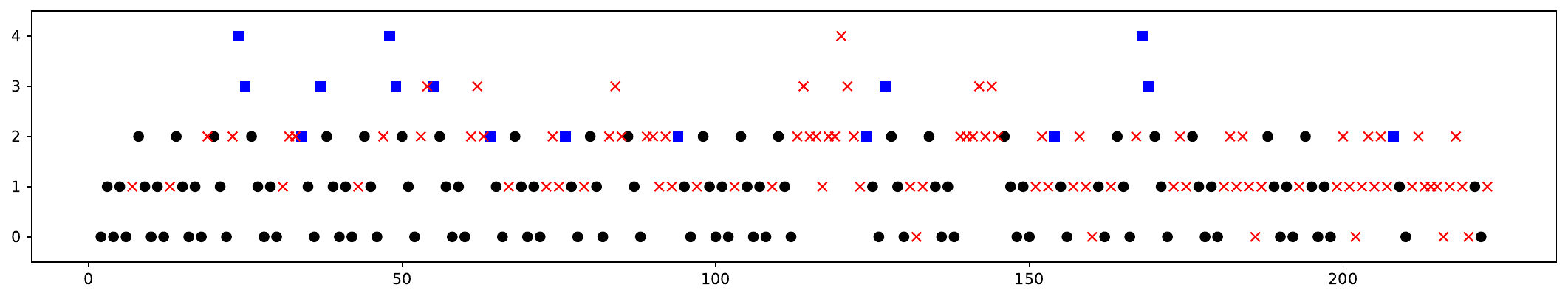} \\[1em]
  \includegraphics*[width=\textwidth]{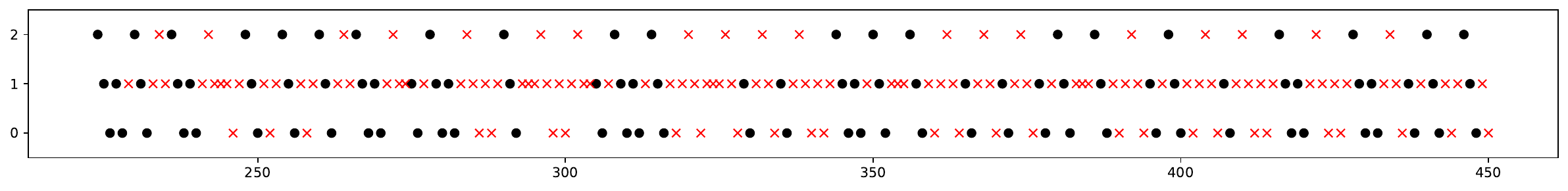}
  \caption{A plot showing $\varepsilon_\Delta = \g(\Delta,2) - (\Delta+2)$, for $2\leq\Delta\leq 450$. The black circle values are those $\Delta$ which belong to one of the families~\ref{itm:F1}--\ref{itm:F3}, the blue square values are other $\Delta$ that meet the upper bound of \cref{thm:linear-upper-bound}, and the red cross values are the remaining $\Delta$ which do not meet the upper bound of \cref{thm:linear-upper-bound}. }
 \label{fig:gDelta2Values}
\end{figure}


From a broad perspective, the data suggests that if~$\Delta$ is large enough, then the behavior of the function $\g(\Delta,2)$ gets much more uniform, and that only for small~$\Delta$ there are exceptional values not following a consistent pattern.

\begin{conjecture}
\label{conj:explicit-form-g-Delta-2}
There is a constant $\delta_2 \in \Z_{>0}$, such that for every $\Delta \geq \delta_2$, we have
  \begin{equation*}
    \g(\Delta, 2) = 
    \begin{cases}
      \Delta + 4 & , \text{if } \Delta \text{ is even and } \Delta = 2 \bmod 3 \\
      \Delta + 3 & , \text{if }\Delta \text{ is odd} \\
      \Delta + 2 & , \text{otherwise}.
    \end{cases}
  \end{equation*}
\end{conjecture}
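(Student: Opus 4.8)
The lower bounds in all three cases are already in hand: the matrix~\eqref{eqn:Delta+2Ex} gives $\g(\Delta,2) \ge \Delta+2$ for every $\Delta$, the matrix~\eqref{eqn:Delta+3Ex} gives $\g(\Delta,2)\ge \Delta+3$ for odd $\Delta$, and \cref{prop:r2-construction-family3} gives $\g(\Delta,2)\ge \Delta+4$ when $\Delta$ is even with $\Delta\equiv 2\pmod 3$. So the entire content of the statement lies in establishing the matching \emph{upper} bounds for all sufficiently large $\Delta$. It is essential that these bounds are only asymptotic: the tabulated sporadic values, such as $\g(p^2,2)=\Delta+5$ and $\g(p^2-1,2)=\Delta+6$, as well as the family $\Delta\equiv 24\pmod{30}$, violate the claimed equalities for small $\Delta$. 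Hence any successful argument must genuinely use that $\Delta\ge\delta_2$ and cannot be uniform; in particular the width-based estimates of \cref{lem:slice-reduction,lem:w-delta-asymptotics}, which only yield a bound of order $\Delta\log\Delta$ here, are far too weak and a direct two-dimensional analysis is required.

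The plan is to pass to a canonical ``level'' form. First I would represent the $n$ columns as pairwise non-parallel vectors in $\Z^2$, fix one representative per projective direction by a sign choice, and sort them by slope. Applying a unimodular transformation to a reduced pair of consecutive columns, I would bring the matrix into the shape $M(a,b)$ from \cref{sect:constr-two-rows}, so that the columns are grouped by their first coordinate $j$ ranging over an interval $\{0,1,\dots,m\}$, with the distinguished column $(0,1)^\intercal$ and primitive columns $(j,k)^\intercal$, $\gcd(j,k)=1$, at level $j\ge 1$. The number of columns then becomes $1+\sum_{j=1}^m N_j$, where $N_j$ counts admissible second coordinates at level $j$. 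Two reductions must be justified here, and they are the first real work: that the extremal matrix may be taken with \emph{primitive} columns only (non-primitive columns such as $2e_1$ do occur for small $\Delta$ but should never be part of an optimum for large $\Delta$), and that the occupied first coordinates form an interval.

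Next I would bound both the number of levels and the count within each level. The minor constraints between level $j$ and level $j'$ force the admissible $k$-range at level $j$ to have length of order $\Delta/j$, and the constraints between consecutive levels couple these ranges; a width-type argument should show that the number of occupied levels is bounded and that, for large $\Delta$, the optimum uses only first coordinates in $\{0,1,2,3\}$, i.e.\ $m\le 3$. With the levels so restricted, $N_1+N_2+N_3$ is an elementary optimization of the lengths of three integer intervals subject to the pairwise minor bounds, in which the coprimality conditions $\gcd(2,k)=1$ and $\gcd(3,k)=1$ enter through the density of admissible $k$. This is precisely where parity and the residue of $\Delta$ modulo $3$ produce the trichotomy: one can gain a column at level~$2$ when $\Delta$ is odd (a primitive $(2,k)^\intercal$ needs $k$ odd, which fails for even $\Delta$ at the extreme value forced by the minor bounds), and one gains a further column by rebalancing toward level~$3$ precisely when $\Delta$ is even with $\Delta\equiv 2\pmod 3$, recovering the three constructions and matching them from above.

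The hard part will be the two uniformity statements that force the threshold $\delta_2$: ruling out non-primitive and many-level configurations in the optimum, and, most seriously, showing that the arithmetic coincidences exploited by the sporadic dense packings (the $p^2$, $p^2-1$, and $24\bmod 30$ families) cannot beat the generic count once $\Delta$ is large. The natural tool for the level-to-level analysis is the Farey/Stern--Brocot structure of primitive vectors in a slope strip, but turning the observed stabilization into an effective structural classification of all near-optimal two-row configurations is exactly the obstacle that keeps this a conjecture rather than a theorem; the computations up to $\Delta=450$ make the stabilization convincing but fall short of a proof.
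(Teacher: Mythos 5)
The statement you are trying to prove is \cref{conj:explicit-form-g-Delta-2}, which the paper itself leaves as a conjecture: its only support there is the explicit families \ref{itm:F1}--\ref{itm:F3}, the matrices \eqref{eqn:Delta+2Ex}, \eqref{eqn:Delta+3Ex}, \eqref{eqn:Delta24mod30} and \cref{prop:r2-construction-family3} for the lower bounds, and exhaustive computation up to $\Delta\leq 450$ for the upper bounds. Your proposal correctly isolates the real content (the asymptotic upper bounds) and correctly assembles the lower bounds from the paper's constructions, but it does not close the gap --- and to your credit you say so yourself in the final paragraph. Since you have not produced a proof of a statement that is open, the verdict has to be that there is a genuine gap.

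Concretely, the missing steps are exactly the ones your plan defers. First, the reduction to a form $M(a,b)$ with first coordinates in $\{0,1,\dots,m\}$ and $m\leq 3$ for large $\Delta$ is a strong structural claim with no argument behind it: a unimodular change of basis plus the lattice-width machinery only gives $m=\cO(\sqrt{\Delta})$ (\cref{lem:w-delta-asymptotics} with $r=2$), and nothing in the paper or in your sketch forces the occupied levels down to a bounded set, nor forces the second coordinates at each level to form an interval, nor rules out non-primitive columns in an optimum. Second, even granting $m\leq 3$, the ``elementary optimization'' must be carried out against \emph{all} admissible triples of integer sets satisfying the pairwise minor bounds, not just interval configurations, and it must be shown to dominate the sporadic dense packings ($\Delta=p^2$, $\Delta=p^2-1$, $\Delta\equiv 24\bmod 30$, and possibly others not yet observed) for every sufficiently large $\Delta$; you name this obstacle but offer no mechanism to overcome it. The best rigorous upper bound currently available is $\g(\Delta,2)\leq p+1\leq \Delta+\cO(\Delta^{0.525})$ from \cref{thm:linear-upper-bound} together with prime-gap estimates, which is very far from $\Delta+\cO(1)$. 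Your level decomposition is a sensible programme --- it is essentially how the paper's own constructions are organized --- but as written it is a research plan, not a proof.
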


\noindent Our data suggests that the choice $\delta_2=385$ might work. If this is the case, then this also implies \cref{conj:monotonicity-two-rows}, as the function described in \cref{conj:explicit-form-g-Delta-2} is non-decreasing.

\subsubsection{More than two rows}

For the case of more than two rows, we computationally determined the values of $\g(\Delta,r)$, for every $r \in \{3,4,5\}$ and $2 \leq \Delta \leq 40$.
The results are given in \cref{tab:gDelta345Values} and are depicted in \cref{fig:gDelta345Values}.

Consistent with the sublinear bound in \cref{thm:sublinear-bound-generic-heller} is that the linear upper bound $\g(\Delta,r) \leq p+1$ from \cref{thm:linear-upper-bound} is attained only for certain small parameters~$\Delta$.
In contrast to the case $r=2$, we see that $\Delta \mapsto \g(\Delta,r)$ cannot be a non-decreasing function.
However, again suggesting more uniformity for larger values of~$\Delta$, the following question (which extends \cref{conj:monotonicity-two-rows}) might have an affirmative answer:

\begin{question}
Given $r$, is there a constant $\Delta_r \in \Z_{>0}$, such that $\g(\Delta,r) \leq \g(\Delta+1,r)$, for every $\Delta \geq \Delta_r$ ?
\end{question}

\begin{table}[ht]
  \begin{tabular}{c|*{30}{c}}
    $\Delta$       & 2 & 3 & 4 & 5 & 6 & 7 & 8 & 9 & 10 & 11 & 12 & 13 & 14 &   \\
    $g(\Delta, 3)$ & \textbf{4} & \textbf{6} & \textbf{6} & \textbf{8} & \textbf{8} & 8 & 8 & \textbf{12} & 10 & 12 & 12 & 12 & 14 &   \\
    $g(\Delta, 4)$ & \textbf{5} & \textbf{6} & \textbf{6} & \textbf{8} & 7 & 8 & 9 & 10 & 9 & 10 & 10 & 10 & 11 &   \\
    $g(\Delta, 5)$ & \textbf{6} & \textbf{6} & \textbf{6} & \textbf{8} & \textbf{8} & 8 & 10 & 10 & 10 & 10 & 10 & 10 & 10 &   \\
    \hline
    $\Delta$       & 15 & 16 & 17 & 18 & 19 & 20 & 21 & 22 & 23 & 24 & 25 & 26 & 27 &   \\
    $g(\Delta, 3)$ & 13 & 14 & 14 & 14 & 16 & 16 & 16 & 16 & 17 & 16 & 17 & 18 & 18 &   \\
    $g(\Delta, 4)$ & 10 & 11 & 11 & 12 & 12 & 12 & 12 & 12 & 12 & 12 & 12 & 12 & 12 &   \\
    $g(\Delta, 5)$ & 10 & 10 & 11 & 10 & 11 & 11 & 11 & 11 & 11 & 12 & 11 & 11 & 12 &   \\
    \hline
    $\Delta$       & 28 & 29 & 30 & 31 & 32 & 33 & 34 & 35 & 36 & 37 & 38 & 39 & 40 &   \\
    $g(\Delta, 3)$ & 18 & 18 & 19 & 19 & 20 & 20 & 20 & 20 & 21 & 22 & 22 & 22 & 24 &   \\
    $g(\Delta, 4)$ & 13 & 13 & 13 & 13 & 13 & 13 & 14 & 14 & 14 & 14 & 14 & 14 & 14 &   \\
    $g(\Delta, 5)$ & 11 & 12 & 12 & 12 & 13 & 13 & 12 & 12 & 12 & 12 & 12 & 12 & 13 &   \\
  \end{tabular}
  \caption{The values $\g(\Delta,r)$ for $2 \leq \Delta \leq 40$ and $r=3,4,5$. The values in bold meet the upper bound in \cref{thm:linear-upper-bound}.}
 \label{tab:gDelta345Values}
\end{table}

\begin{figure}[ht]
  \begin{tikzpicture}[scale=0.3, every node/.style={scale=0.7}]
    \tikzstyle{line} = [thick];
    \def\transp{70};
    
    \draw (2,3) -- (2,24);
    \draw (2,3) -- (40,3);
    \node at (20,1.5) {$ \Delta $};
    \node at (-0.5,13.5) {$ \g(\Delta, r) $};

    \draw (2,3) -- (40,3);
    \foreach \x in {2,4,...,40}
    {
      \node at (\x,2.5) {$ \x $};
    }
    \foreach \y in {4,6,...,24}
    {
      \node at (1.3,\y) {$  \y $};
    }
    \foreach \x in {3,...,40}
    {
      \draw[thin, gray] (\x,3) -- (\x,24);
    }
    \foreach \y in {4,...,24}
    {
      \draw[thin,gray] (2,\y) -- (40,\y);
    }
    
    \node at (41.7,24) {$ \g(\Delta, 3) $};
    \draw[black!50,line width=1pt, double]  (2,4) -- (3,6) -- (4,6) -- (5,8) -- (6,8) -- (7,8) -- (8,8) -- (9,12) -- (10,10) -- (11,12) -- (12,12) -- (13,12) -- (14,14) -- (15,13) -- (16,14) -- (17,14) -- (18,14) -- (19,16) -- (20,16) -- (21,16) -- (22,16) -- (23,17) -- (24,16) -- (25,17) -- (26,18) -- (27,18) -- (28,18) -- (29,18) -- (30,19) -- (31,19) -- (32,20) -- (33,20) -- (34,20) -- (35,20) -- (36,21) -- (37,22) -- (38,22) -- (39,22) -- (40,24); 

    \node at (41.7,14) {$ \g(\Delta, 4) $};
    \draw[black!50,line width=2.2pt, solid]  (2,5) -- (3,6) -- (4,6) -- (5,8) -- (6,7) -- (7,8) -- (8,9) -- (9,10) -- (10,9) -- (11,10) -- (12,10) -- (13,10) -- (14,11) -- (15,10) -- (16,11) -- (17,11) -- (18,12) -- (19,12) -- (20,12) -- (21,12) -- (22,12) -- (23,12) -- (24,12) -- (25,12) -- (26,12) -- (27,12) -- (28,13) -- (29,13) -- (30,13) -- (31,13) -- (32,13) -- (33,13) -- (34,14) -- (35,14) -- (36,14) -- (37,14) -- (38,14) -- (39,14) -- (40,14); 

    \node at (41.7,13) {$ \g(\Delta, 5) $};
    \draw[black!50,line width=2.2pt, densely dotted]  (2,6) -- (3,6) -- (4,6) -- (5,8) -- (6,8) -- (7,8) -- (8,10) -- (9,10) -- (10,10) -- (11,10) -- (12,10) -- (13,10) -- (14,10) -- (15,10) -- (16,10) -- (17,11) -- (18,10) -- (19,11) -- (20,11) -- (21,11) -- (22,11) -- (23,11) -- (24,12) -- (25,11) -- (26,11) -- (27,12) -- (28,11) -- (29,12) -- (30,12) -- (31,12) -- (32,13) -- (33,13) -- (34,12) -- (35,12) -- (36,12) -- (37,12) -- (38,12) -- (39,12) -- (40,13); 
    
  \end{tikzpicture}
  \caption{A plot of the values in \cref{tab:gDelta345Values} illustrating the growth of $\g(\Delta,r)$, for $r \in \{3,4,5\}$ and for small $\Delta$.}
  \label{fig:gDelta345Values}
\end{figure}
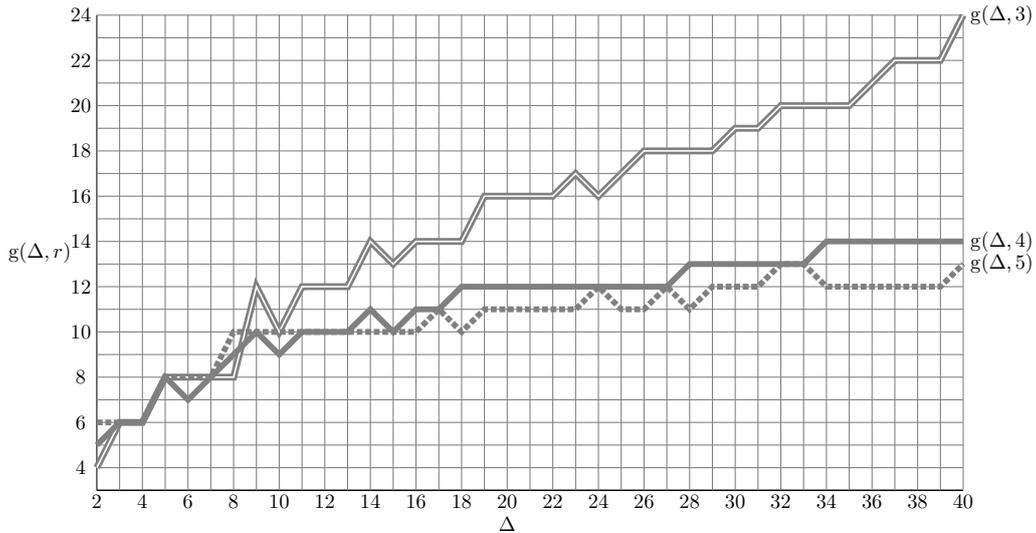

\subsubsection{Non-generic setting}
\label{sect:non-generic-computations}

With only slight modifications the ideas behind \cref{alg:computeHeller} are also applicable in the non-generic setting.
More precisely, we are interested in determining the maximum number of columns that a $\Delta$-modular matrix~$A$ with~$r = \rank(A)$ rows can have.
Compared to the generic case, this means that we allow vanishing minors of size $r \times r$.
The following lemma is the ``non-generic'' version of \cref{lem:existence-Deltabound-totally-gen} and can be proved analogously. 

\begin{lemma}
Let $D \in \Z^{r\times(r+L)}$ be a $\Delta$-modular matrix with $\rank(D)=r$.
Then, there exists a $\Delta$-bound matrix $C\in\Z^{r\times L}$.
Further, each column of $C$ is a solution to a linear system of equations $Ax=0 \bmod \Delta$ for an integer matrix $A\in \Z^{r\times r}$ in Hermite normal form satisfying $\det(A)=\Delta$.
\end{lemma}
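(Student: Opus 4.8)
The plan is to mirror the proof of \cref{lem:existence-Deltabound-totally-gen} almost verbatim, simply discarding every appeal to genericity. Indeed, in that proof genericity is used only to upgrade the conclusion from ``$\Delta$-bound'' to ``totally generic $\Delta$-bound''; it plays no role whatsoever in establishing $\Delta$-boundedness itself. So I would begin exactly as there: by $\Delta$-modularity of $D$ there is an $r \times r$ submatrix of $D$ with determinant $\pm\Delta$, and after permuting the columns of $D$ and scaling rows by $\pm 1$ -- operations preserving both $\Delta$-modularity and the rank -- I may assume that $D = (A, u_1, \ldots, u_L)$ with $u_1, \ldots, u_L \in \Z^r$ and $A \in \Z^{r\times r}$ in Hermite normal form satisfying $\det(A) = \Delta$.

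Next I would construct $C$ exactly as in the generic case. Since $\det(A) = \Delta \neq 0$, the matrix $A$ is invertible over $\Q$, so for each $i$ there is a unique $s_i \in \Q^r$ with $A s_i = u_i$. Setting $S = (s_1, \ldots, s_L)$ and $C := \Delta S$, \cref{lem:det-times-rational} (applied to $A$ and $S$, noting that $AS = (u_1,\ldots,u_L)$ is integral) guarantees $C \in \Z^{r \times L}$. The congruence requirement is immediate: writing $c_1,\ldots,c_L$ for the columns of $C$, we have $A c_i = \Delta(A s_i) = \Delta u_i = 0 \bmod \Delta$, so each $c_i$ solves $Ax = 0 \bmod \Delta$ for the Hermite normal form $A$ with $\det(A) = \Delta$, as required.

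It then remains to verify that $C$ is $\Delta$-bound, and for this genericity is genuinely not needed. As recalled in the text, being $\Delta$-bound is equivalent to $S = C/\Delta$ being totally $1$-submodular. Here I would use the factorization $D = A \cdot (I, S)$, which holds by construction. For any index set $J$ selecting $r$ columns of $(I,S)$, the corresponding columns of $D$ are $A \cdot (I,S)_J$, whence $\det(D_J) = \det(A)\det((I,S)_J) = \Delta \det((I,S)_J)$; since $D$ is $\Delta$-modular this forces $\abs{\det((I,S)_J)} \leq 1$, that is, $(I,S)$ is $1$-submodular. Applying \cref{prop:totally-vs-simple-modularity-and-genericity} with $\Delta = 1$ -- using only its submodularity equivalence and not the genericity one -- yields that $S$ is totally $1$-submodular, and hence $C = \Delta S$ is $\Delta$-bound.

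I do not expect a genuine obstacle in this argument, which is precisely why the authors can assert that it ``can be proved analogously''. The only points that require care are that the normalization to Hermite normal form preserves $\Delta$-modularity, and that the reduction invokes just the submodularity half of \cref{prop:totally-vs-simple-modularity-and-genericity}. The single place where the hypothesis is genuinely consumed is the determinant identity $\det(D_J) = \Delta\det((I,S)_J)$ together with $\det(A) = \Delta$, which is exactly what converts $\Delta$-modularity of $D$ into $1$-submodularity of $(I,S)$; everything else is bookkeeping inherited unchanged from the generic proof.
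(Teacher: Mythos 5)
Your proposal is correct and is precisely the argument the paper intends: the authors state only that the lemma ``can be proved analogously'' to \cref{lem:existence-Deltabound-totally-gen}, and your write-up carries out exactly that adaptation, correctly isolating the determinant identity $\det(D_J)=\Delta\det((I,S)_J)$ as the step that converts $\Delta$-modularity of $D$ into $1$-submodularity of $(I,S)$ and noting that genericity was only ever needed for the ``totally generic'' half of the conclusion. No gaps.
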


While the corresponding counting function $\Delta \mapsto \h(\Delta,r)$ concerns matrices allowing a zero-column and also both~$v$ and~$-v$, for reasons of efficiency, we implemented the corresponding algorithm in a way that both of these are not allowed.
This reduces the size of the hypergraphs roughly by a factor of two and therefore improves the runtime of the algorithm considerably.
However, the hypergraphs still end up very large and also happen to have large cliques.
Therefore, the performance compared to the generic setting is much worse, so that we get more limited data.

The values $\h(\Delta, 3)$, for $3\leq \Delta\leq 11$, have been computed by an alternative algorithm in~\cite{averkovschymura2023onthemaximal}.
With our implementation of \cref{alg:computeHeller} tailored to~$\h(\Delta,r)$, we could compute further values and determined $\h(\Delta,3)$ up to $\Delta \leq 25$.
As a result, we have $\h(\Delta,3)=6\Delta+9 = r^2 + r + 1 + 2r(\Delta-1)$, for all $3\leq \Delta\leq 25, \Delta\neq 4$, which is the size of the construction given in~\cite{leepaatstallknechtxu2022polynomial}.
This gives further evidence that $\Delta=4$ might be the only exception for $r=3$ rows.

For $r=4$ rows, we were able to calculate $\h(\Delta,4)$ in the range $3 \leq \Delta \leq 8$, with the results given in \cref{tbl:hDelta4values}.
For $\Delta \notin \{4,8\}$, the values again are given by the construction in~\cite{leepaatstallknechtxu2022polynomial}, while for $\Delta \in \{4,8\}$, our computations show that the larger examples constructed in~\cite{averkovschymura2023onthemaximal} are best possible.

\begin{table}[ht]
  \begin{tabular}{c|cccccc}
    $\Delta$       &  3 &  4 &  5 &  6 &  7 &  8 \\
    $\h(\Delta,4)$ & 37 & 49 & 53 & 61 & 69 & 81
  \end{tabular}
  \caption{The values $\h(\Delta,4)$ for $3 \leq \Delta \leq 8$. Explicit constructions attaining $\h(4,4)$ and $\h(8,4)$ can be found in \cite{averkovschymura2023onthemaximal}.}
  \label{tbl:hDelta4values}
\end{table}

\noindent An interesting observation is that there are several hypergraphs for which there are a lot of maximum cliques. Therefore it takes a lot of time to go through the search tree of all cliques and the \verb|hClique| algorithm takes considerably longer for these hypergraphs. 
In our implementation of \cref{alg:computeHeller} we parallelized the for loop in Line~\ref{alg:for-loop}. In the generic setting this works well, as most hypergraphs have roughly the same size. However, in the non-generic setting most of the time will be spent waiting on a single thread corresponding to one such large hypergraph. Therefore it might be fruitful to instead parallelize the algorithm \verb|hClique| itself.

\bibliographystyle{amsplain}
\bibliography{generic-delta}

\end{document}